\newcommand{\setB}{\mathbb{B}}
\newcommand{\setN}{\mathbb{N}} 
\newcommand{\setZ}{\mathbb{Z}} 
\newcommand{\setQ}{\mathbb{Q}}
\newcommand{\setR}{\mathbb{R}}
\newcommand{\setS}{\mathbb{S}}
\newcommand{\calA}{\mathcal{A}}
\newcommand{\calH}{\mathcal{H}}
\newcommand{\calO}{\mathcal{O}}
\newcommand{\calR}{\mathcal{R}}
\newcommand{\calS}{\mathcal{S}}
\newcommand{\calV}{\mathcal{V}}
\newcommand{\calZ}{\mathcal{Z}}
\newcommand{\trace}[1]{\mbox{Tr}\,{#1}}
\newcommand{\eps}{\varepsilon }
\newcommand{\tensor}{\otimes}
\newcommand{\quot}[2]{{#1}\slash{#2}}
\newcommand{\abs}[1]{\left\vert #1 \right\vert }
\newcommand{\norm}[1]{\left\Vert #1 \right\Vert }
\newcommand{\conj}[1]{\overline{#1 }}
\newcommand{\jnoc}[1]{\underline{#1 }}
\newcommand{\slra}[1]{\stackrel{#1}{\longrightarrow}}
\newcommand{\incc}[1]{\left[{#1 }\right]}
\newcommand{\inoo}[1]{\left({#1 }\right)}
\providecommand{\boxx}{\,\square\,}
\providecommand{\nabla}{\,\triangledown\,}
\newcommand{\sklein}[1]{\mbox{\begin{scriptsize}$#1$\end{scriptsize}}}
\newcommand{\scalar}[1]{\left\langle #1 \right\rangle }
\renewcommand{\smash}{\wedge}
\DeclareMathOperator{\colim}{colim}
\DeclareMathOperator{\sign}{sign}
\DeclareMathOperator{\id}{id}
\newtheorem{theorem}{Theorem}[section]
\newtheorem{proposition}[theorem]{Proposition}
\newtheorem{lemma}[theorem]{Lemma}
\newtheorem{corollary}[theorem]{Corollary}
\theoremstyle{definition}
\newtheorem{definition}[theorem]{Definition}
\title[Equivariant Lefschetz theory]{Equivariant Lefschetz and Fuller indices via topological intersection theory}
\author{Philipp Wruck}
\subjclass{Primary 55N91; Secondary 55M20}
\thanks{The author was supported by a grant from the Deutsche Forschungsgemeinschaft (DFG)}
\begin{document}

\begin{abstract}
 For a compact Lie group $G$, we use $G$-equivariant Poincar\'{e} duality for ordinary $RO(G)$-graded homology to define an equivariant intersection product, the dual of the 
 equivariant cup product. Using this, we give a homological construction of the equivariant Lefschetz number and a simple proof of the equivariant Lefschetz fixed point theorem. 
 With similar techniques, an equivariant Fuller index with values in the rationalized Burnside ring is constructed. 
\end{abstract}
\maketitle
\setcounter{section}{-1}
\section{Introduction}
 The classical Lefschetz number can be defined in various ways, all of which are equivalent in the category of smooth compact manifolds. All the facts in the following discussion
 can be found e.g. in Bredon's introductory book \cite{bredon}. Notation and terminology that has not been introduced will be explained more thoroughly in the main part of the 
 paper. 

 The first definition uses ordinary homology. For a continuous map $f:X\to X$ of a finite CW complex, one can define 
 \[
  L^{hom}(f)=\trace{H_*(f;\setQ)}=\sum_{k=0}^\infty(-1)^kH_k(f;\setQ)\in\setZ,
 \]
 the trace of the map $f$ induces in rational homology. One can prove that it actually takes values in the integers.

If $f:M\to M$ is a smooth self map of a smooth compact manifold and the map $(\id, f):M\to M\times M$ is transverse to the diagonal, one can define
\[
 L^{smooth}(f)=\sum_{x\in Fix(f)}\sign\det T_xf\in\setZ.
\]
The transversality condition ensures that this sum is finite, and this definition can be extended to arbitrary continuous maps. 

Finally, if $M$ is smooth, compact and orientable of dimension $n$, one can use the Poincar\'{e} duality isomorphism $P_*:H^*(M\times M;\setZ)\to H_{2n-*}(M\times M;\setZ)$. If 
$\calO_\Delta$ denotes the image of the fundamental class of $M$ under the diagonal embedding $M\to M\times M$, $\calO_\Gamma$ the image of the fundamental class of $M$ under the 
map $(\id, f):M\to M\times M$ and $M\times M$ is oriented by the cross product of the fundamental class of $M$ with itself, one defines
\[
 L^{geom}(f)=\eps(P_{2n}(P_n^{-1}(\calO_\Delta)\cup P_n^{-1}(\calO_\Gamma)))\in\setZ,
\]
where $\eps:H_0(M\times M)\to\setZ$ is the augmentation. 

A fundamental result is the Lefschetz fixed point theorem which states that non-vanishing of the Lefschetz number of $f$ implies existence of a fixed point. More generally,
one can show that the Lefschetz number only depends on the restriction of the map $f$ to an arbitrary neighbourhood of its fixed points. Then under some genericity assumptions,
one can show that it is non-trivial around an isolated fixed point. 

This result is apparent using $L^{smooth}$, but using $L^{geom}$ gives a better understanding of the geometric reasons. The intersection product of two fundamental classes is
the fundamental class of the intersection of these manifolds, again assuming some transversality conditions. Thus, the intersection product of the diagonal class with the graph
class of $f$ is the fundamental class of the manifold of fixed points, and the theorem follows.

Using the geometric interpretation of the Poincar\'{e} dual of the cup product as related to physical intersections therefore is very useful and it is desirable to have a similar
intuition when there is a group action involved. 

The basic idea is to use ordinary equivariant homology, cohomology and the respective dual theories constructed by Costenoble and Waner in \cite{costenoble}. The essential 
definition will assume that the manifold is $G$-orientable in the sense of \cite{may2}, and is a $V$-manifold in the sense that the tangential space of $M$ at $x$ is isomorphic 
to a fixed $G$-representation $V$ as a $G_x$-representation. This can be generalized to the case where $M$ is any $G$-manifold by some standard constructions of fixed point 
theory.

The special case of $V$-manifolds allows the theory to be developed along the lines of the non-equivariant constructions. The Poincar\'{e} dual of the equivariant cup product, the 
equivariant intersection product, will be defined and it will be shown that under some rather restrictive assumptions, the geometric interpretation mentioned before is still 
valid in this case.

The definition of an equivariant Lefschetz number then proceeds as in the non-equivariant case, yielding an element in the so called Burnside ring $A(G)$ of $G$ for which we can
prove an equivariant Lefschetz theorem. For a closed subgroup $H$ of $G$, we have the fixed point homomorphism $\eta_H:A(G)\to A(W(H))$, $W(H)$ being the Weyl group of $H$.
\setcounter{section}{5}
\begin{theorem}
 Let $M$ be a compact orientable $V$-manifold. Then if a $G$-map $f:M\to M$ has no fixed point of orbit type at least $(H)$, we have $\eta_H(L_G(f))=0$.
\end{theorem}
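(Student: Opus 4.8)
\emph{Plan.} By construction one has $L_G(f)=\eps(c)$, where $c=P\bigl(P^{-1}(\calO_\Delta)\cup P^{-1}(\calO_\Gamma)\bigr)\in H^G_0(M\times M)$ is the equivariant intersection class of the diagonal $\Delta(M)$ and the graph $\Gamma_f(M)=(\id,f)(M)$, and $\eps\colon H^G_0(M\times M)\to H^G_0(\ast)=A(G)$ is the augmentation. I would argue in three steps: (1) localise $c$ into a $G$-invariant neighbourhood of $\Delta(\mathrm{Fix}(f))$; (2) arrange this neighbourhood to contain no point of orbit type $\ge(H)$, so that its $H$-fixed set is empty; (3) show that $\eta_H$ kills any class so localised, by identifying $\eta_H$ with the effect on coefficients of passing to $H$-fixed points.

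\emph{Steps 1 and 2.} The classes $P^{-1}(\calO_\Delta)$ and $P^{-1}(\calO_\Gamma)$ are the equivariant duality (Thom) classes of the closed $G$-submanifolds $\Delta(M)$ and $\Gamma_f(M)$, hence have cohomological support on these submanifolds; their cup product therefore has support on $\Delta(M)\cap\Gamma_f(M)$, and, dualising back, $c$ is the image of a class $\bar c\in H^G_0(N)$ for every $G$-invariant open neighbourhood $N$ of $\Delta(M)\cap\Gamma_f(M)$ — the homological form of the classical fact, recalled in the introduction, that $L(f)$ depends only on $f$ near $\mathrm{Fix}(f)$. Since $\Delta$ and $(\id,f)$ are $G$-maps, $\Delta(M)\cap\Gamma_f(M)=\Delta(\mathrm{Fix}(f))$; and "$f$ has no fixed point of orbit type $\ge(H)$" means precisely $\mathrm{Fix}(f)\cap G\!\cdot\!M^H=\emptyset$, equivalently $\Delta(\mathrm{Fix}(f))\cap G\!\cdot\!(M\times M)^H=\emptyset$. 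Hence we may take $N=(M\times M)\setminus G\!\cdot\!(M\times M)^H$, which is open, $G$-invariant, contains $\Delta(\mathrm{Fix}(f))$, and satisfies $N^H=\emptyset$.

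\emph{Step 3.} Let $r^H\colon H^G_\ast(X)\to H^{W(H)}_\ast(X^H)$ be the natural $H$-fixed point restriction on ordinary $RO(G)$-graded homology; on $X=\ast$ it is $\eta_H\colon A(G)\to A(W(H))$. Granting that $r^H$ commutes with the equivariant Poincar\'e duality isomorphisms, with the cup product and with the augmentation used to define $c$ — that is, with the functoriality of the Costenoble--Waner dual theories under fixed points — naturality gives $\eta_H(L_G(f))=\eta_H(\eps c)=\eps^{W(H)}(r^H c)$, where $\eps^{W(H)}\colon (M\times M)^H\to\ast$. But $r^H c$ is the image of $r^H\bar c\in H^{W(H)}_\ast(N^H)=H^{W(H)}_\ast(\emptyset)=0$, so $r^H c=0$ and therefore $\eta_H(L_G(f))=0$. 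The main obstacle is exactly the compatibility invoked at the start of this step: one must control how passage to $H$-fixed points interacts with the Thom classes of the representation bundles appearing in the duality and with the orientation of the fixed submanifold $M^H$ induced by a $G$-orientation of $M$, and check that the induced map on coefficients is $\eta_H$; once that functoriality is in place the argument is formal. A more pedestrian alternative is to first prove the identity $\eta_H(L_G(f))=L^{hom}(f^H)$ for the self-map $f^H\colon M^H\to M^H$ of the compact ENR $M^H$, and then conclude, since $\mathrm{Fix}(f^H)\subseteq\mathrm{Fix}(f)\cap M^H=\emptyset$, by the classical Lefschetz fixed point theorem.
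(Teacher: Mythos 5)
Your route is genuinely different from the paper's and is viable in outline, but the part you treat as routine is where the real work lies, while the part you flag as the ``main obstacle'' is precisely what the paper has already supplied. The paper argues by first proving $\eta_H(L_G(f))=L_{W(H)}(f^H)$ (Corollary \ref{prop:lefschetzrestriction}, resting on Proposition \ref{prop:fundamentalrestriction} and the fact that the products respect restriction to fixed spaces), thereby reducing to the fixed-point-free case over $W(H)$, and then kills the intersection class via $\calO_\Delta\bullet\calO_\Gamma=\delta_*(\delta^*(\tau_\Gamma)\cap\calO_M)$ together with $\delta^*(\tau_\Gamma)=0$ (Lemmas \ref{lem:fundamentalthomduality} and \ref{lem:thomclassnormalbundle}); disjointness of supports enters only at the level of Thom spaces. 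Your Step 3 compatibilities (restriction to $H$-fixed points commutes with duality, products and augmentation, and is $\eta_H$ on coefficients) are therefore not the obstacle: they are the content of Theorem \ref{thm:ordinaryproperties}(v), the proposition at the end of Section 2, and Proposition \ref{prop:fundamentalrestriction}, and they are exactly how Corollary \ref{prop:lefschetzrestriction} is proved. (Note also that the intersection class lives in the dual theory $\calH^G_0((M\times M)_+)$, not in $H^G_0$.)

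The genuine gap is Steps 1--2: the claim that $c=\calO_\Delta\bullet\calO_\Gamma$ lifts to $\calH_0^G(N_+)$ for every invariant open neighbourhood $N$ of $\Delta(M)\cap\Gamma_f(M)$ is asserted by appeal to ``cohomological support'', but in the $RO(G)$-graded Costenoble--Waner framework this localisation is not available off the shelf. Making it precise requires supported (relative) classes --- the Thom classes regarded in $H^V_G$ of $(M\times M)/((M\times M)\setminus U_\Delta)$ and of $(M\times M)/((M\times M)\setminus U_\Gamma)$ --- a cup product landing in the cohomology of $(M\times M)/((M\times M)\setminus(U_\Delta\cap U_\Gamma))$ via the map induced by the diagonal, a cap product against $\calO_{M\times M}$ landing in $\calH^G_*$ of a neighbourhood of $U_\Delta\cap U_\Gamma$, and a compactness argument ensuring $U_\Delta\cap U_\Gamma\subseteq N$ for small tubular neighbourhoods. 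All of this is true and can be established in the style of Section 4, but none of it is quoted or proved in your plan, and it is precisely the machinery the paper's Thom-space computation is designed to avoid; as written, this step carries all the content. Finally, your ``pedestrian alternative'' is incorrect as stated: $\eta_H(L_G(f))\in A(W(H))$ while $L^{hom}(f^H)\in\setZ$, so the asserted identity cannot hold. What is true (see the discussion following Theorem \ref{thm:lefschetz}) is that the mark homomorphism sends $L_G(f)$ to the classical $L(f^H)$; to deduce $\eta_H(L_G(f))=0$ along those lines you would have to verify vanishing of the marks at all $(K)\ge(H)$ and invoke injectivity of the total mark homomorphism on $A(W(H))$, an ingredient your sketch does not mention.
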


We will show that in case $G$ is finite, our definition of an equivariant Lefschetz number agrees with Definition (4.1) of \cite{rosenberg} of the ``equivariant Lefschetz class 
with values in the Burnside ring''.

We then proceed to compute the equivariant Lefschetz number from local data around the fixed point set, which can be seen as a generalization of Theorem \ref{thm:lefschetz}. 
A map $f:V\to V$ of a $G$-representation to itself induces a map $\setS^V\to\setS^V$ via the Pontryagin-Thom construction. We denote the stable homotopy class of this induced map
by $Deg_G(f)$. We have an induction map $t^G_H:A(H)\to A(G)$ for subgroups $H$ of $G$. The result then is the following.

\begin{theorem}
 Let $M$ be a $G$-manifold and $f:M\to M$ a $G$-map with finitely many $G$-orbits of fixed points $Gx_1,\dots, Gx_n$. Assume that $\id-N_{x_i}f$ has no eigenvalue of unit modulus
 for $i=1, \dots, n$, where $N_xf$ is the component of $T_xf$ normal to the orbit $Gx$. Then the equivariant Lefschetz number $L_G(f)$ is given as
 \[
  L_G(f)=\sum_{i=1}^nt^G_{G_{x_i}}\left(Deg_{G_{x_i}}(\id-N_{x_i}f)\right).
\]
\end{theorem}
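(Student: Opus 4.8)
The plan is to prove this as a localization statement, following the classical pattern recalled in the introduction: reduce $L_G(f)$ to a sum of contributions supported near the orbits $Gx_i$, pass from $G$ to the isotropy groups $G_{x_i}$ via an induction formula, and identify each local $G_{x_i}$-contribution with the equivariant degree $Deg_{G_{x_i}}(\id-N_{x_i}f)$. Throughout we use the geometric (intersection-theoretic) definition of $L_G$ developed earlier.

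First I would observe that since $f$ is $G$-equivariant with $f(x_i)=x_i$, the whole orbit $Gx_i$ is fixed pointwise, so $T_{x_i}f$ restricted to the tangent space of the orbit is the identity; the hypothesis then forces $\id-N_{x_i}f$ to be invertible, so each $x_i$ is isolated as a fixed point in a slice transverse to its orbit. Hence $\mathrm{Fix}(f)=Gx_1\sqcup\cdots\sqcup Gx_n$, and we may choose pairwise disjoint $G$-invariant tubular neighbourhoods $U_i$ of the orbits $Gx_i$. By the localization property of the equivariant Lefschetz number --- the equivariant counterpart of the classical fact that the Lefschetz number depends only on the germ of $f$ along $\mathrm{Fix}(f)$ and is additive over disjoint pieces, cf.\ the discussion around Theorem~\ref{thm:lefschetz} --- one gets $L_G(f)=\sum_{i=1}^n \ell_i$ in $A(G)$, where $\ell_i$ is the local Lefschetz index of $f|_{U_i}$ computed from the intersection product on $U_i$.

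Next, the slice theorem identifies $U_i$ $G$-equivariantly with $G\times_{G_{x_i}}V_i$, where $V_i$ is the normal slice $G_{x_i}$-representation, and under this identification $f|_{U_i}$ becomes the map induced from a $G_{x_i}$-self-map $g_i$ of $V_i$ with $\mathrm{Fix}(g_i)=\{0\}$ and differential $D_0 g_i=N_{x_i}f$. The key algebraic input is an induction formula $\ell_i=t^G_{G_{x_i}}(\lambda_i)$, where $\lambda_i\in A(G_{x_i})$ is the local $G_{x_i}$-Lefschetz index of $g_i$ on $V_i$; this I would derive from the naturality of equivariant Poincar\'e duality and of the equivariant cup/intersection product under the induction functor $G\times_{G_{x_i}}(-)$, matching the resulting geometric transfer with the Burnside-ring induction $t^G_{G_{x_i}}$ on $H_0$. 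Finally, for the local term $\lambda_i$, compactify $g_i$ to a based $G_{x_i}$-map of $\setS^{V_i}$; the geometric definition of $L_{G_{x_i}}$ on the slice disk expresses $\lambda_i$, via the tom~Dieck isomorphism $\{\setS^{V_i},\setS^{V_i}\}^{G_{x_i}}\cong A(G_{x_i})$, as the stable class of the displacement map $x\mapsto x-g_i(x)$. Since $\id-D_0g_i=\id-N_{x_i}f$ is invertible, the straight-line homotopy from $g_i$ to its linearization $N_{x_i}f$ keeps $0$ the only fixed point, so $\lambda_i$ equals the stable class of $\id-N_{x_i}f$, i.e.\ $Deg_{G_{x_i}}(\id-N_{x_i}f)$. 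Combining the three steps gives the stated formula.

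The main obstacle is the induction formula $\ell_i=t^G_{G_{x_i}}(\lambda_i)$: one must carefully track the equivariant fundamental class, the Poincar\'e duality map and the intersection product through the diffeomorphism $G\times_{G_{x_i}}V_i\cong U_i$ and verify that the geometric induction so obtained is precisely the Burnside-ring transfer $t^G_{G_{x_i}}$. A secondary point requiring care is that $x_i$ is never an isolated fixed point of $f$ in $M$ (its orbit is fixed), so the degree that appears must be taken in the normal slice representation $V_i$ rather than on the full tangent representation; the slice reduction of the previous step is exactly what isolates this normal direction.
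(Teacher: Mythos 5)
Your overall route is the same as the paper's: localize $L_G(f)$ near the fixed orbits, identify each tubular neighbourhood with $G\times_{G_{x_i}}V_i$, push the local index from $G_{x_i}$ up to $G$ via an induction/transfer formula (in the paper this is done with the Wirthm\"{u}ller isomorphism and the transfer, giving $L_G(f)=t^G_{G_{x_i}}(L_{G_{x_i}}(n))$), and finally compute the local $G_{x_i}$-index as the stable class of the displacement map and linearize using invertibility of $\id-N_{x_i}f$. The last step and the transfer step are in substance what the paper does, and you correctly flag the transfer identification as the delicate point.

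There is, however, a genuine gap at the slice-theorem step: it is not true that under the identification $U_i\cong G\times_{G_{x_i}}V_i$ the map $f|_{U_i}$ ``becomes the map induced from a $G_{x_i}$-self-map $g_i$ of $V_i$.'' A $G$-map fixing the orbit has, in slice coordinates, the form $[g,v]\mapsto[g\gamma(v),n(v)]$ by Krupa's normal decomposition, with a generally nontrivial drift $\gamma:V_i\to G$ (equivariant for the conjugation action); $f$ need not carry the slice into itself, so it is not induced from a slice map, and your induction formula $\ell_i=t^G_{G_{x_i}}(\lambda_i)$, which is set up precisely for induced maps, does not apply to $f|_{U_i}$ as it stands. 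The paper spends a substantial part of the argument removing this drift: one homotopes $(\,\id,f)$, within the relevant map of pairs into the Thom space of the diagonal, through $([g,w],[g\gamma(tw),n(w)])$ to $([g,w],[g,n(w)])$, and the non-degeneracy hypothesis is used exactly here to guarantee that along the homotopy no new intersections with the diagonal neighbourhood occur on the boundary sphere (so the local index is unchanged); it also ensures that the derivative at $0$ of the resulting slice map $n$ is the normal derivative $N_{x_i}f$, which your identification $D_0g_i=N_{x_i}f$ silently assumes. Without this reduction (or an equivalent argument that the drift does not affect the index), the chain ``slice map $\Rightarrow$ induction $\Rightarrow$ local degree'' does not get off the ground, so you should add the Krupa decomposition and the admissible homotopy as an explicit step.
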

\setcounter{section}{6}
\setcounter{theorem}{0}
The homological techniques used to define the equivariant Lefschetz number can be generalized to find other homotopy invariants. In particular, they can be used to define an 
equivariant homological Fuller index. This is a homotopy invariant for flows, where periodic orbits of the flow take the role of fixed points of maps. We will construct an 
equivariant generalization of the homological Fuller index of \cite{franzosa}, which will take values in the rationalized Burnside ring. It will be shown that this index behaves 
nicely under restriction to group fixed points and is a homological version of the index constructed by the author in \cite{wruck}, where methods of dynamical system theory were 
used. This section results in the following.

\begin{theorem}
 The equivariant Fuller index $F_G$ is a $G$-homotopy invariant of a flow $\varphi$ with respect to an isolated set $C$ of periodic points, and has the following properties.
 \begin{enumerate}[i)]
  \item It takes values in the rationalized Burnside ring $A(G)\tensor\setQ$.
  \item If $C$ consists of finitely many periodic orbits $\gamma_1,\dots, \gamma_n$ and $\varphi_i$ is the flow $\varphi$, restricted to an isolating neighbourhood of the 
  orbit $\gamma_i$, then
  \[
   F_G(\varphi)=\sum_{i=1}^nF_G(\varphi_i).
  \]

  \item If $\varphi$ has a single periodic orbit of multiplicity $m$, then $F_G(\varphi)=L_G(P^m)\tensor\frac 1m\in A(G)\tensor\setQ$, where $P$ is an equivariant Poincar\'{e}
  map for the orbit, considered with multiplicity one.
 
  \item If $\eta_H(F_G(\varphi))\neq0$, then $\varphi$ has a periodic orbit of orbit type at least $(H)$.
 \end{enumerate}
\end{theorem}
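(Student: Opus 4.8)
The plan is to obtain $F_G$ by transporting the homological construction of the equivariant Lefschetz number $L_G$ into a parametrized, $S^1$-equivariant setting, following Franzosa's strategy but replacing singular homology by the $RO(G)$-graded theory and its dual used in the previous sections. First I would fix an isolated set $C$ of periodic points of the $G$-flow $\varphi$, a $G$-isolating neighbourhood $U$ of $C$, and bounds $0<\tau_1<\tau_2$ so that every period of a periodic orbit meeting $U$ lies in $(\tau_1,\tau_2)$; reducing to the $V$-manifold situation by the standard constructions of fixed point theory, I may moreover assume that the transverse slices occurring below are again orientable $V'$-manifolds. On $U\times(\tau_1,\tau_2)$ consider the map $F(x,\lambda)=\varphi_\lambda(x)$. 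By the isolation hypothesis the coincidence locus $\{(x,\lambda)\mid F(x,\lambda)=x\}$ is compact in $U\times(\tau_1,\tau_2)$, and it is invariant under the reparametrization $S^1$-action that commutes with the $G$-action. Applying the equivariant intersection product of the preceding sections to the diagonal class and the graph class of $F$ relative to this compact support produces a localized class; collapsing the period coordinate after dividing out the circle action and applying the Poincar\'e-duality identification then yields an element of $A(G)\tensor\setQ$, in which the denominators are exactly the multiplicities with which orbits wrap. The verification that this element is independent of $U$, of $\tau_1,\tau_2$, and of the remaining auxiliary choices is the exact analogue of the well-definedness of $L_G$ and constitutes the technical core of the section.

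Granting the construction, I would read off the four properties in turn. Property (i) is built in, the only point beyond the Lefschetz case being the rationalization forced by the weights $1/m$. Property (ii) follows from additivity of the equivariant intersection product over disjoint supports: if $C=\gamma_1\cup\dots\cup\gamma_n$ with pairwise disjoint isolating neighbourhoods, the localized class is the corresponding sum, hence so is $F_G$. For property (iii) I would take a $G$-invariant slice $\Sigma$ transverse to a single periodic orbit of primitive period $p$; the equivariant Poincar\'e return map $P\colon\Sigma\to\Sigma$ has the property that the fixed points of $P^m$ are precisely the periodic points of $\varphi$ in $U$ with period near $mp$, and unwinding the definitions identifies the localized class of $\varphi$ with the one computing $L_G(P^m)$, while the free reparametrization action contributes the scalar $1/m$; this is Fuller's normalization. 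Property (iv) is the exact analogue of the equivariant Lefschetz fixed point theorem proved earlier: if $\varphi$ has no periodic orbit of orbit type at least $(H)$, then after passing to the relevant fixed data the coincidence locus is empty there, so the localized class---and therefore $\eta_H(F_G(\varphi))$---vanishes; contraposition gives the statement.

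The $G$-homotopy invariance is then formal: given a $G$-homotopy $\varphi^s$ of flows for which $C$ remains isolated inside a fixed $U$ with fixed period bounds, the coincidence loci assemble into a compact subset of $U\times(\tau_1,\tau_2)\times[0,1]$, and the equivariant intersection product is invariant under homotopies of compactly supported classes, so $F_G(\varphi^0)=F_G(\varphi^1)$.

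I expect the main obstacle to be the bookkeeping around the circle symmetry. Since periodic orbits are never isolated fixed points of any single self-map, one genuinely needs the parametrized form of the duality and intersection theory, and one must show that collapsing the period coordinate together with the free $S^1$-reparametrization produces exactly the rational weights $1/m$, uniformly across orbit types; this is where equivariance interacts most delicately with Fuller's averaging trick. A secondary technical point, needed for (iii) to be meaningful, is to check that the reduction of a general $G$-manifold to the $V$-manifold setting is compatible with the passage to transverse Poincar\'e slices.
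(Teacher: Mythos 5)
The decisive step in your construction---``collapsing the period coordinate after dividing out the circle action \dots yields an element of $A(G)\tensor\setQ$, in which the denominators are exactly the multiplicities''---is precisely the point that is missing, and as stated it does not work. The reparametrization action is only an $\setR$-action along the flow; it is not a circle action on the ambient parametrized space $U\times(\tau_1,\tau_2)$, and its orbits are circles only on the coincidence locus itself, so there is nothing well defined to divide out. The localized diagonal--graph intersection class you form is exactly what the paper calls the homological index: it lives in $\calH_1^G(M_+)$ (one direction of the coincidence set is the flow direction), a group depending on $M$ which can be trivial for uninteresting reasons (non-equivariantly, for any simply connected $M$), and pushing it forward to a point lands in an integral group and kills it. No manipulation of this single class produces the rational weights $\frac 1m$, let alone a homotopy-invariant element of $A(G)\tensor\setQ$; you yourself flag this as ``the main obstacle,'' but properties (i) and (iii) and the homotopy invariance all rest on it, so the proposal has a genuine gap at its core rather than a technical bookkeeping issue.

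The paper closes this gap with Franzosa's prime-power device, which your sketch never invokes: for every sufficiently large prime $p$ one passes to the free $\setZ_p$-quotient $M_p$ of $M^p\setminus\Delta_f$, the induced flow $\varphi_p$ and the isolated set $C_p$, and records the homological index in $\calH_1^G({M_p}_+)$; these are assembled in $\quot{\prod_p\calH_1^G({M_p}_+)}{\bigoplus_p\calH_1^G({M_p}_+)}$, and composing the fixed-point restrictions $\psi_K$ with the covering-space winding homomorphisms $\mu_K$ gives, by Franzosa's non-equivariant rationality theorem, an element of $C(\Phi G,\setZ)\tensor\setQ\cong A(G)\tensor\setQ$; that is the definition of $F_G$, and (i) is built into it. The properties are then proved by detection rather than by your direct geometric arguments: by construction $\eta_H(F_G(\varphi))$ is the non-equivariant homological Fuller index of the fixed-point flow $\varphi^H$, so the Poincar\'e-map formula and the existence statement follow from Franzosa's results applied to every $\varphi^H$ (with $P^H$ the Poincar\'e map of $\varphi^H$), the fact that elements of $A(G)\tensor\setQ$ are determined by all $\eta_H$, the equivariant Lefschetz theorem, and the genericity result of \cite{wruck} that every equivariant flow is $G$-homotopic to one with finitely many $G$-orbits of periodic orbits. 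Your additivity and slice arguments are reasonable for the localized class, but without the $M_p$ construction (or a fully developed $S^1$-equivariant degree framework, which you would then have to build) they neither define nor compute the index the theorem is about.
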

\setcounter{section}{0}

The paper is organized as follows. In section one, we review the construction of equivariant ordinary homology, cohomology and the dual theories. Our main reference is 
\cite{costenoble}, which sadly is only available on the arxiv so far. So we also refer to the published reference \cite{alaska} at several places.
In section two, we discuss restriction behaviour and products in these homology theories, still following \cite{costenoble} closely. In section three, we start to develop some
new material. We start with the basic duality theory as developed in \cite{costenoble} and show that duality behaves well with respect to products. This already allows us
to define the equivariant Lefschetz number using equivariant intersection theory. We then turn in section four to restriction properties of the Lefschetz number, both to fixed 
sets and to subgroups, which turn out to be key properties in what is to follow. In section five, we prove the equivariant Lefschetz fixed point theorem \ref{thm:lefschetz}
and its generalization Theorem \ref{thm:lefschetz2}, and establish the fact that the equivariant Lefschetz number equals the L\"{u}ck-Rosenberg Lefschetz class for finite $G$. 

In section six, we use similar techniques to those of section three and four, but this time for flows, to define an equivariant Fuller index. We prove that this index behaves 
well with respect to restriction, from which it follows that this index equals the dynamical index of \cite{wruck}. This section therefore can be seen as an equivariant 
generalization of \cite{franzosa} and as a complement to \cite{wruck}.
 

\section{Ordinary Equivariant Homology}
We have to establish some conventions at the beginning. Throughout the paper, unless otherwise stated, $G$ will be a compact Lie group. Subgroups of $G$ are always assumed to be 
closed. A $G$-space $X$ is a pointed topological space $X$ with a left $G$-action $G\times X\to X,\;(g,x)\mapsto gx$. The base point of $X$ is fixed by $G$. A $G$-manifold is 
assumed to be smooth with a smooth $G$-action and is not assumed to be pointed. For homological considerations, we will work with pointed $G$-spaces throughout and we will add a 
disjoint base point to unbased spaces $X$, denoting the result by $X_+$.

We will use several standard constructions from equivariant topology, all of which can be found in \cite{bredongroups}. Most notable is the twisted product $X\times_HY$ of a 
right (unpointed) $H$-space $X$ with a left (unpointed) $H$-space $Y$. This is defined to be the quotient space $\quot{(X\times Y)}H$, where $H$ acts as $h(x, y)=(xh, hy)$. The 
twisted product becomes a left $G$-space provided $X$ carries a left $G$-action such that $g(xh)=(gx)h$ for all $g\in G$, $h\in H$, $x\in X$. Then $g[x,y]=[gx,y]$ is a well 
defined action of $G$, where $[x,y]$ denotes the class of $(x, y)$ in $X\times_HY$. Similarly we have a twisted smash product $X\smash_HY$ of pointed spaces, which is 
obtained from $X\smash Y$ by identifying $[xh, y]$ with $[x, hy]$. As before, this carries a left $G$-action, provided $X$ carries a left $G$-action with the aforementioned 
compatibility assumption. In almost all cases, $X$ will be the $G$-space $G$, acting by left and right translations on itself. 

In general, when we have to denote a class of an element $x$ under a standard quotient map, such as $G\mapsto\quot GH$, $X\times Y\mapsto X\smash Y$ and the like, we will use the 
notation $[x]$ for that class, as long as no confusion is probable. 

The construction of equivariant ordinary homology rests on the definition of $G$-CW(V) complexes, where $V$ is any orthogonal representation of $G$. Such complexes generalize
ordinary $G$-complexes. Most notably, the dual cell structure of a $G$-CW structure on a $V$-manifold is a $G$-$CW(V)$ structure.
 
We will not go into the details of the construction and refer to \cite{costenoble} or \cite{alaska} instead. Let it be said that each $V$ gives rise to a cellular homology theory 
graded on $\setZ$, and these, for varying representations, can be pasted together to give a theory graded on $RO(G)$. One can then use $CW(V)$ approximations to define the 
homology theory on the category of all $G$-spaces. We specify what we want to understand by $RO(G)$ in the following definition.

\begin{definition}
 Let $G$ be a compact Lie group. Let $I$ be the set of $G$-isomorphism classes of orthogonal irreducible $G$-representations. We choose a representant $V_i$ of $i\in I$. In 
 particular, we can assume that the underlying space of $V_i$ is $\setR^{n_i}$ for some $n_i\in\setN$. The free abelian group on the elements $V_i$ is denoted by $RO(G)$ and is 
 called the real representation ring of $G$. The ring structure on $RO(G)$ is induced by the tensor product of representations.
\end{definition}

If $V$ is any orthogonal $G$-representation, it is isomorphic to a direct sum of the form $\bigoplus_{i\in I}V_i^{k_i}$, where almost all $k_i$ are equal to zero. We fix one 
isomorphism for each representation $V$, taking identities whenever possible. In the following, whenever two representations pop up which turn out to be isomorphic, we silently 
assume that we fix the isomorphism to be given by the two particular isomorphisms with the representing element in $RO(G)$. We also introduce the notation $\abs{V}$ for the real 
dimension of the vector space $V$. This will simplify matters when we restrict from equivariant $RO(G)$-graded theories to integer graded theories.

Throughout the paper, the Burnside ring $A(G)$ of a compact Lie group $G$ will play a prominent role. Recall that the normalizer of a subgroup $H$ of $G$ is defined as
\[
 N(H)=\{g\in G\;|\;gHg^{-1}\subseteq H\}.
\]
It is a subgroup of $G$ and $H$ is normal in $N(H)$. The quotient $\quot{N(H)}{H}$ is denoted by $W(H)$ and is called the Weyl group of $H$. If $X$ is any $G$-space, the fixed
point space $X^H$ is a $W(H)$-space in a natural way.

The Burnside ring can be defined to be the free abelian group on the set of $G$-orbits of the form $\quot GH$, where $H\subseteq G$ is a subgroup with $W(H)$ finite. The product 
is induced by Cartesian product of $G$-orbits. Of fundamental importance is the fact that the Burnside ring is isomorphic to $\pi_0^G(\setS^0)$, the zeroth equivariant stable 
homotopy group of the equivariant sphere spectrum. It is in this disguise that we will encounter $A(G)$ most of the time. For further details, we refer to \cite{may} or 
\cite{tomdieck}.

We quickly introduce the notion of a Mackey functor, since these functors will be our coefficient systems.

\begin{definition}
 Let $G$ be a compact Lie group. The stable orbit category $\hat{\calO}_G$ is the category whose objects are the orbits $\quot GH$ for subgroups $H$ of $G$, and whose morphisms 
 are the stable $G$-maps between orbits. $\hat{\calO}_G$ is an additive category, therefore we can define a $G$-Mackey functor to be an additive functor from $\hat{\calO}_G$ to 
 the category $\calA b$ of abelian groups.
\end{definition}

It is common to define a Mackey functor to be a contravariant additive functor, but we will need the covariant version as well. We therefore make the following convention. A 
contravariant Mackey functor is labelled with an overlining, e.g. $\conj{T}:\hat{\calO}_G\to\calA$. A covariant Mackey functor is labelled with an underlining, e.g. 
$\underline{S}:\hat{\calO}_G\to\calA b$.

The most important Mackey functors for our purposes are the functors
\[
 \conj{A}_{\quot GH}:\hat{\calO}_G\to\calA b,\;\conj{A}_{\quot GH}(\quot GK)=\{{\quot GK}_+,{\quot GH}_+\}_G,
\]
\[
 \underline{A}^{\quot GH}:\hat{\calO}_G\to\calA b,\;\underline{A}^{\quot GH}(\quot GK)=\{{\quot GH}_+,{\quot GK}_+\}_G.
\]
Here and in the following, we will denote the set (and in fact abelian group) of stable $G$-maps between the pointed $G$-spaces $X$ and $Y$ by $\{X,Y\}_G$.

The Mackey functor $\conj{A}_{\quot GG}$ is also called the Burnside Mackey functor, since $\conj{A}_{\quot GG}(\quot GK)\cong A(K)$. This functor takes the role that $\setZ$ 
plays non-equivariantly.

Mackey functors can be restricted both to subgroups and to fixed sets. 

To define restriction to subgroups, let $K\subseteq G$ be a subgroup. Then there is the induction functor
\[
 i^G_K:\hat{\calO}_K\to\hat{\calO}_G,\;\quot KL\mapsto G\times_K\quot KL\cong\quot GL.
\]
For $G$-Mackey functors $\jnoc{S},\conj{T}$, the restriction to $K$ is defined as the pull-back via $i^G_K$, $\jnoc{S}\big|K={i^G_K}^*(\jnoc{S})=\jnoc{S}\circ i^G_K$ and 
$\conj{T}|K={i^G_K}^*(\conj{T})=\conj{T}\circ i^G_K$. 

If $K\subseteq G$ is normal, we have the $\quot GK$-Mackey functor $\jnoc{S}^K$ defined as follows. Let $\jnoc{S}_K(\quot GH)$ be the subgroup of $\jnoc{S}(\quot GH)$, generated
by the images of $\jnoc{S}(\quot GL)$ under maps $\jnoc{S}(\varphi)$ with $\varphi:\quot GL\to\quot GH$ and $K\nsubseteq L$. This is a sub-Mackey functor of $\jnoc{S}$ in the 
evident way and we define
\[
 \jnoc{S}^K(\quot{(\quot GK)}{(\quot HK)})=\quot{\jnoc{S}(\quot GH)}{\jnoc{S}_K(\quot GH)}.
\]
If $K$ is not normal, we define $\jnoc{S}^K$ to be the functor obtained by first restricting to the normalizer $N(K)$ of $K$ and then applying the preceeding construction. 
$\conj{T}^K$ for a contravariant Mackey functor is defined completely analogously.

The most important property of restriction of Mackey functors for our purposes is the fact that $\conj{A}_{\quot GG}\big|H\cong\conj{A}_{\quot HH}$ and 
$\conj{A}_{\quot GG}^K\cong\conj{A}_{\quot{W(K)}{W(K)}}$. This can easily be calculated from the definitions, or can be found in \cite{costenoble}.

We now state a basic theorem for $RO(G)$-graded ordinary homology and cohomology. It is similar to Theorem 1.2.5 of \cite{costenoble} and Theorem A of \cite{costenoble2}.
We denote with $\calS_G$ the category of $G$-spectra in the sense of Lewis, May and Steinberger \cite{may}. $\calR_G$ is the category with objects $RO(G)$. A morphism $V\to W$ is a stable 
homotopy class $\setS^V\to\setS^W$, induced by an isometric $G$-isomorphism $V\to W$. 

\begin{theorem}\label{thm:ordinaryproperties}
 Let $\conj{T}$ be a contravariant Mackey functor, $\underline{S}$ be a covariant Mackey functor. There are functors 
 \[
  H^G_*(\;\cdot\;;\underline{S}):\calS_G\times \calR_G\times \calR_G\to\calA b,\;(X, V, W)\mapsto H^G_{V-W}(X;\underline{S})
 \]
 and
 \[
  H^*_G(\;\cdot\;;\conj{T}):\calS_G\times\calR_G\times\calR_G\to\calA b,\;(X, V, W)\mapsto H_G^{V-W}(X;\conj{T}).
 \]
 These functors are represented by $G$-spectra $H\jnoc{S}$ and $H\conj{T}$, respectively. The corresponding dual theories are denoted by
 \[
  \calH_G^*(\;\cdot\;;\underline{S}):\calS_G\times\calR_G\times\calR_G\to\calA b,\;(X, V, W)\mapsto\calH_G^{V-W}(X;\underline{S})
 \]
 and
 \[
  \calH^G_*(\;\cdot\;;\conj{T}):\calS_G\times\calR_G\times\calR_G\to\calA b,\;(X, V, W)\mapsto\calH^G_{V-W}(X;\conj{T}).
 \]
 and are called dual ordinary cohomology and dual ordinary homology, respectively. These four theories possess the following properties.
\begin{enumerate}[(i)]
 \item For fixed $\alpha=V-W$, the functors $H^G_\alpha(\;\cdot\;;\underline{S})$ and $\calH^G_\alpha(\;\cdot\;;\conj{T})$ are exact on cofibre sequences and send wedges to 
 direct sums. The functors $H_G^\alpha(\;\cdot\;;\conj{T})$ and $\calH_G^\alpha(\;\cdot\;;\jnoc{S})$ are exact on cofibre sequences and send wedges to products.

 \item For $\alpha=V-W$ and $Z$ a $G$-representation, there are natural suspension isomorphisms
 \[
  \sigma_Z:H^G_\alpha(X;\jnoc{S})\to H^G_{\alpha+Z}(\Sigma^ZX;\jnoc{S})
 \]
  and
 \[
  \sigma_Z:H_G^\alpha(X;\conj{T})\to H_G^{\alpha+Z}(\Sigma^ZX;\conj{T}).
 \]
 Both of these satisfy $\sigma_Z\circ\sigma_{Z'}=\sigma_{Z'\oplus Z}$. Similar statements hold for the dual theories.

 \item If $n=V-W$ is an integer, the ordinary theories coincide with Bredon homology and Bredon cohomology. They satisfy the dimension axiom in the form that
 \[
  H_n^G(-_+;\jnoc{S})\cong\begin{cases}
                              \jnoc{S} & n=0\\
                              0        & \mbox{ else}
                             \end{cases}
 \]
 and
 \[
  H_G^n(-_+;\conj{T})\cong\begin{cases}
                              \conj{T} & n=0\\
                              0        & \mbox{ else}
                             \end{cases}
 \]
 as functors of the stable orbit category.

 For the dual theories, there holds
 \[
  \calH_n^G(D(-_+);\conj{T})\cong\begin{cases}
                              \conj{T} & n=0\\
                              0        & \mbox{ else}
                             \end{cases}
 \]
 and
 \[
  \calH^n_G(D(-_+);\jnoc{S})\cong\begin{cases}
                              \jnoc{S} & n=0\\
                              0        & \mbox{ else},
                             \end{cases}
 \]
where $D$ denotes the Spanier-Whitehead dual functor, assigning to a $G$-spec\-trum $X$ its dual $F(X,\setS^0)$ (compare \cite{may}, Definition III.3.3).

\item There are natural Wirthm\"{u}ller isomorphisms. For $K\subseteq G$ a subgroup, $\alpha=V-W$ and a $K$-spectrum $X$,
\[
 H^G_\alpha(G_+\wedge_KX;\jnoc{S})\cong H^K_{\alpha|K}(X;\jnoc{S}|K),
\]
\[
 H_G^\alpha(G_+\wedge_KX;\conj{T})\cong H_K^{\alpha|K}(X;\conj{T}|K),
\]
If $L$ denotes the tangential space at $[e]$ in $\quot GK$, then there are natural dual Wirthm\"{u}ller isomorphisms
\[
 \calH^G_\alpha(G_+\wedge_K(\setS^{-L}\wedge X);\conj{T})\cong\calH^K_{\alpha|K}(X;\conj{T}|K)
\]
and
\[
 \calH_G^\alpha(G_+\wedge_K(\setS^{-L}\wedge X);\jnoc{S})\cong\calH_K^{\alpha|K}(X;\jnoc{S}|K)
\] 
\item If $K\subseteq G$ is a closed normal subgroup, $\alpha=V-W$, and $\Phi^K$ denotes the geometric fixed point construction (Definition 1.5.6 of \cite{costenoble}), there is 
a natural restriction map
\[
 H_\alpha^G(X;\jnoc{S})\to H^{\quot GK}_{\alpha^K}(\Phi^K(X);\jnoc{S}^K),
\]
\[
 H^\alpha_G(X;\conj{T})\to H_{\quot GK}^{\alpha^K}(\Phi^K(X);\conj{T}^K),
\]
and restriction commutes with the suspension isomorphisms. 

For the dual theories, there are natural restrictions
\[
 \calH^G_\alpha(X;\conj{T})\to\calH_{\alpha^K}^{\quot GK}(\Phi^K(X);\conj{T}^K),
\]
\[
 \calH_G^\alpha(X;\jnoc{S})\to\calH^{\alpha^K}_{\quot GK}(\Phi^K(X);\jnoc{S}^K).
\]
On the subcategory of $G$-spaces, we can drop the $\Phi^K$ and take $X^K$ instead of $\Phi^K(X)$.
\end{enumerate}
\end{theorem}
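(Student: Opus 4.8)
The plan is to assemble the statement from the foundational work of Costenoble and Waner, supplying the few points not recorded there verbatim. First I would recall their construction of the four theories. For an orthogonal $G$-representation $V$ one has the notion of a $G$-$CW(V)$ complex; its cellular chains, taken with values in a covariant (resp.\ contravariant) Mackey functor, compute a $\setZ$-graded homology $H^G_{V-*}(\;\cdot\;;\jnoc{S})$ (resp.\ cohomology $H_G^{V-*}(\;\cdot\;;\conj{T})$). Letting $V$ range over all representations and gluing these $\setZ$-graded pieces along the suspension isomorphisms $\sigma_Z$ produces a theory graded on $RO(G)$ on the category of $G$-$CW(V)$ complexes, and $CW(V)$-approximation extends it to all of $\calS_G$; this is essentially Theorem 1.2.5 of \cite{costenoble} (see also \cite{alaska}, \cite{costenoble2}). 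The explicit construction of the equivariant Eilenberg-MacLane spectra, or Brown representability, yields the representing spectra $H\jnoc{S}$ and $H\conj{T}$. The dual theories $\calH^G_*(\;\cdot\;;\conj{T})$ and $\calH_G^*(\;\cdot\;;\jnoc{S})$ are the ones built by Costenoble-Waner from dual cell structures; on finite complexes they agree with applying the Spanier-Whitehead dual $D$ to the space variable, which gives the quickest way to see the duality relations $\calH^G_*(D(-_+))\cong H^G_*(-_+)$ and their cohomological analogues.

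With the theories so described, I would treat (i) and (ii) as formal. Exactness on cofibre sequences and the wedge axioms follow from the cellular construction and from representability by $G$-spectra: mapping into a wedge of spectra yields a direct sum on homology, mapping out of it yields a product on cohomology, and the dual-cell chains of a wedge split correspondingly. The suspension isomorphisms $\sigma_Z$ and the compatibility $\sigma_Z\circ\sigma_{Z'}=\sigma_{Z'\oplus Z}$ are part of the data of an $RO(G)$-graded theory and are carried along by the construction; the dual versions are obtained by applying $D$ and using $D(\Sigma^Z X)\simeq\Sigma^{-Z}DX$. Property (iii) for the ordinary theories is the dimension axiom characterizing Bredon theory: the cellular chains of an orbit ${\quot GH}_+$ are concentrated in degree $0$, where they return $\jnoc{S}$ (resp.\ $\conj{T}$) as a functor on $\hat{\calO}_G$, and when $\alpha=n$ is an integer the $CW(V)$-theory reduces to ordinary $G$-CW homology, hence to Bredon's theory. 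For the dual theories the corresponding assertion is that the dimension axiom holds on the duals $D({\quot GH}_+)$ of orbits; this follows by combining the duality identification with the ordinary dimension axiom, after checking that the identification is natural in the stable orbit category.

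Properties (iv) and (v) I would reduce to facts about the Eilenberg-MacLane spectra together with standard change-of-groups adjunctions. For (iv): the restriction $H\jnoc{S}|K$ is the $K$-equivariant Eilenberg-MacLane spectrum $H(\jnoc{S}|K)$ — immediate since $\jnoc{S}|K$ is by definition the pullback along $i^G_K$ — and the Wirthm\"{u}ller isomorphism $H^G_\alpha(G_+\wedge_KX;\jnoc{S})\cong H^K_{\alpha|K}(X;\jnoc{S}|K)$ is then the homological shadow of the adjunction between induction $G_+\wedge_K(\;\cdot\;)$ and restriction. The dual Wirthm\"{u}ller isomorphism compares induction $G_+\wedge_K(\;\cdot\;)$ with coinduction $F_K(G_+,\;\cdot\;)$, which differ by a twist, and this twist is exactly the factor $\setS^{-L}$ appearing in the statement, $L$ being the tangent representation of $\quot GK$ at $[e]$, cf.\ \cite{may}. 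For (v): one uses the behaviour of $H\jnoc{S}$ under the geometric fixed point functor $\Phi^K$ (established in \cite{costenoble}), whose effect on coefficients is the passage $\jnoc{S}\mapsto\jnoc{S}^K$ built from the quotients $\quot{\jnoc{S}(\quot GH)}{\jnoc{S}_K(\quot GH)}$; the restriction map is then induced by the lax monoidal transformation $X\wedge H\jnoc{S}\to\Phi^K(X)\wedge\Phi^K(H\jnoc{S})$, it commutes with suspension because $\Phi^K$ does, and on suspension spectra of $G$-spaces $\Phi^K(\Sigma^\infty X)\simeq\Sigma^\infty X^K$ gives the space-level form, with $\alpha^K=V^K-W^K\in RO(\quot GK)$.

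The step I expect to be the main obstacle is the bookkeeping for the dual theories: pinning down the dual Wirthm\"{u}ller isomorphism with precisely the shift by $\setS^{-L}$, and checking that the geometric-fixed-point restriction maps are compatible with Spanier-Whitehead duality, so that the dual-homology and dual-cohomology forms of (v) are mutually consistent. The ``dual'' ordinary theories are substantially less standard than the ordinary ones, their wedge and dimension axioms require the dual cell machinery rather than a naive application of $D$, and the grading passage $\alpha\mapsto\alpha^K$ interacts nontrivially with both $D$ and $\Phi^K$; everything else is either literally contained in \cite{costenoble}, \cite{alaska}, \cite{costenoble2} or a routine adaptation thereof.
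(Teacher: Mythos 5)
Your proposal takes essentially the same route as the paper, which in fact offers no proof of this theorem at all: it is stated as a summary of results from \cite{costenoble}, \cite{alaska} and \cite{costenoble2}, precisely the sources from which you assemble the statement, and your sketch of (i)--(v) is consistent with them. The only small slip is the asserted relation $\calH^G_*(D(-_+))\cong H^G_*(-_+)$, which mixes the variance of the coefficients; the correct identification pairs dual homology of $D(X)$ with ordinary cohomology of $X$ with the same contravariant coefficients, e.g. $\calH^G_n(D(X);\conj{T})\cong H_G^{-n}(X;\conj{T})$, but this does not affect your argument.
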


The Wirthm\"{u}ller isomorphisms allow us to define restriction maps with respect to subgroups. We recall from \cite{may}, II.6 that the Spanier-Whitehead dual of an orbit 
${\quot GK}_+$ is given by the spectrum $G_+\wedge_K\setS^{-L}$, where $L$ as above is the tangential representation at $[e]\in\quot GK$.

We define
\begin{eqnarray*}
 H_\alpha^G(X;\jnoc{S})&\to&H_\alpha^G(D({\quot GK}_+)\wedge X;\jnoc{S})\\
                       &\cong&H_\alpha^G(G_+\wedge_K\setS^{-L}\wedge X;\jnoc{S})\\
                       &\cong&H_{\alpha|K}^K(\setS^{-L}\wedge X;\jnoc{S}|K)\\
                       &\cong&H_{(\alpha|K)+L}^K(X;\jnoc{S}|K),
\end{eqnarray*}
where the first map is induced by the dual to the projection $\quot GK\to *$ and the last is suspension.
Similarly,
\begin{eqnarray*}
 H^\alpha_G(X;\conj{T})&\to&H^\alpha_G(G_+\wedge_KX;\conj{T})\\
                       &\cong&H^{\alpha|K}_K(X;\conj{T}|K),
\end{eqnarray*}
where the first map is induced by the projection $\quot GK\to *$,
\begin{eqnarray*}
 \calH_\alpha^G(X;\conj{T})&\to&\calH_\alpha^G(D({\quot GK}_+)\wedge X;\jnoc{S})\\
                           &\cong&\calH_\alpha^G(G_+\wedge_K(\setS^{-L}\wedge X);\jnoc{S})\\
                           &\cong&\calH_{\alpha|K}^K(X;\jnoc{S}|K)
\end{eqnarray*}
and
\begin{eqnarray*}
 \calH_G^\alpha(X;\conj{T})&\to&\calH_G^\alpha(G_+\wedge_KX;\conj{T})\\
                           &\cong&\calH_K^{\alpha|K}(\setS^L\wedge X;\conj{T}|K)\\
                           &\cong&\calH_K^{\alpha|K-L}(X;\conj{T}|K).
\end{eqnarray*}
Most notable are the shifts in dimension for ordinary homology and ordinary dual cohomology, which of course vanish if $G$ is finite.


\section{Products}
The main tool to define products in equivariant homology and cohomology is a product of Mackey functors. We state the following result from \cite{costenoble}.
\begin{proposition}
 Let $\conj{S}, \conj{T}$ be two contravariant $G$-Mackey functors. Then there exists a box product $\conj{S}\boxtimes\conj{T}$, which is a $G\times G$-Mackey functor. The 
 restriction to the diagonal subgroup $G\subseteq G\times G$ yields an internal box product $\conj{S}\boxx\conj{T}$, a $G$-Mackey functor. The product has the following 
 properties.
 \begin{enumerate}[(i)]
  \item For $G$-spaces $E, F$, there is a natural isomorphism 
  \[
   \{-\,,E\}_G\boxtimes\{-\,,F\}_G\cong\{-\,,E\smash F\}_{G\times G}.
  \]
  \item $\conj{A}_{\quot GG}\boxx\conj{T}\cong\conj{T}$ for every $G$-Mackey functor $\conj{T}$.
 \end{enumerate}

There is also a product of contra- and covariant Mackey functors, $\conj{T}\nabla\jnoc{S}$. The corresponding properties are:
\begin{enumerate}[(i)]
 \item On $G$-orbits,
  \[
   \conj{A}_{\quot GL}\nabla\jnoc{A}^{\quot GK}\cong\{{\quot GK}_+,-_+\,\wedge{\quot GL}_+\}_G
  \]
 
 \item $\conj{A}_{\quot GG}\nabla\jnoc{S}\cong\jnoc{S}$ for every $G$-Mackey functor $\jnoc{S}$.
\end{enumerate}
\end{proposition}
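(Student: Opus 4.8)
The plan is to realise all three products as instances of Day convolution along the external product of stable orbit categories, and then to extract every claimed property from its behaviour on the representing Mackey functors $\conj A_{\quot GK}$ and $\jnoc A^{\quot GK}$.

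First I would record that Cartesian product of orbits defines an additive bifunctor
\[
 \hat{\calO}_G\times\hat{\calO}_G\longrightarrow\hat{\calO}_{G\times G},\qquad(\quot GH,\quot GK)\longmapsto\quot GH\times\quot GK\cong\quot{(G\times G)}{(H\times K)},
\]
which on morphism groups is given by the external smash of stable maps, $\{A_+,A'_+\}_G\tensor\{B_+,B'_+\}_G\to\{(A\times B)_+,(A'\times B')_+\}_{G\times G}$. Since a $G$-Mackey functor is an additive presheaf on $\hat{\calO}_G$ and since every such presheaf is canonically a colimit of the representable functors $\conj A_{\quot GK}=\{(\,\cdot\,)_+,{\quot GK}_+\}_G$, I would define $\conj S\boxtimes\conj T$ to be the Day convolution of $\conj S$ and $\conj T$ with respect to that bifunctor, i.e.\ the coend
\[
 (\conj S\boxtimes\conj T)(\quot{(G\times G)}{J})=\int^{H,K}\{(\quot{(G\times G)}{J})_+,(\quot GH\times\quot GK)_+\}_{G\times G}\tensor\conj S(\quot GH)\tensor\conj T(\quot GK).
\]
That this is again an additive functor, hence a $(G\times G)$-Mackey functor, is immediate from standard properties of coends, and by its universal property $\boxtimes$ preserves colimits in each variable. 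The internal product is then $\conj S\boxx\conj T:=\Delta^*(\conj S\boxtimes\conj T)$, the restriction along the diagonal $\Delta\colon G\to G\times G$ in the sense of the restriction construction for Mackey functors recalled above.

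For property (i), the co-Yoneda lemma gives at once $\conj A_{\quot GH}\boxtimes\conj A_{\quot GK}\cong\{(\,\cdot\,)_+,(\quot GH\times\quot GK)_+\}_{G\times G}$, which is precisely the asserted isomorphism in the case where $E$ and $F$ are orbits. For arbitrary $G$-spaces $E,F$ the statement is the degree-zero K\"{u}nneth isomorphism for the box product: the external smash pairing on stable homotopy assembles, via the universal property of the coend, into a natural comparison map $\{-,E\}_G\boxtimes\{-,F\}_G\to\{-,E\smash F\}_{G\times G}$, and one verifies that it is an isomorphism by a $G$-CW induction reducing both factors to orbits, using that smashing with a space preserves cofibre sequences, that $\boxtimes$ preserves colimits, and that orbits are compact (so the relevant stable homotopy groups take wedges to direct sums). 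Property (ii) is the special case $E=S^0$: since $\conj A_{\quot GG}=\{-,S^0\}_G$ and $S^0\smash F\cong F$ as a $(G\times G)$-space via the projection to the second factor, (i) gives $\conj A_{\quot GG}\boxtimes\conj A_{\quot GK}\cong\{(\,\cdot\,)_+,(\quot{(G\times G)}{(G\times K)})_+\}_{G\times G}$, whose restriction along $\Delta$ is $\conj A_{\quot GK}$; since $\conj A_{\quot GG}\boxx-$ preserves colimits and every Mackey functor is a colimit of the $\conj A_{\quot GK}$, this yields $\conj A_{\quot GG}\boxx\conj T\cong\conj T$ for all $\conj T$.

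The mixed product $\conj T\nabla\jnoc S$ of a contravariant and a covariant Mackey functor is produced by an analogous construction, now pairing the contravariant direction of the stable orbit category against the covariant one: using that $\jnoc A^{\quot GK}=\{{\quot GK}_+,(\,\cdot\,)_+\}_G$ is the covariant representable, one forms the corresponding Day-convolution box product of the covariant $\jnoc S$ with the contravariant $\conj T$ and restricts along $\Delta$, and co-Yoneda then yields the value $\conj A_{\quot GL}\nabla\jnoc A^{\quot GK}\cong\{{\quot GK}_+,(\,\cdot\,)_+\smash{\quot GL}_+\}_G$ on representables; property (ii) for $\nabla$ is again the unit axiom, obtained from $\conj A_{\quot GG}=\{-,S^0\}_G$ and $S^0\smash X\cong X$. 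The step I expect to be the real obstacle is not any single property but the bookkeeping that makes the construction legitimate: verifying that the external smash is an additive bifunctor on the stable orbit categories with the correct behaviour on composition, so that the coend formulas are functorial and the resulting products are coherently associative and unital; keeping the $G$- versus $(G\times G)$-indexing straight under restriction along $\Delta$; carrying out the K\"{u}nneth reduction from orbits to arbitrary $G$-spaces; and, in the $\nabla$ case, tracking the contravariant and covariant labellings throughout. This is precisely the content worked out in detail in \cite{costenoble}.
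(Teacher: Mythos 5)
You should note first that the paper itself contains no proof of this proposition: it is stated as a result quoted from \cite{costenoble}, so the only comparison available is with that source, and your outline (Day convolution along the external product $\hat{\calO}_G\times\hat{\calO}_G\to\hat{\calO}_{G\times G}$, internalization by restriction along the diagonal, co-Yoneda for the values on representables, and cocontinuity of $\conj{A}_{\quot GG}\boxx(-)$ together with the canonical presentation of any Mackey functor by representables for the unit axiom) is essentially the construction carried out there; in substance it is correct. Two steps are thinner than routine bookkeeping, though. In the reduction of (i) from orbits to general $G$-CW spaces, the ingredients you list (smashing preserves cofibre sequences, $\boxtimes$ preserves colimits, orbits are stably compact) do not close the induction by themselves: $\boxtimes$ is only right exact and $\{-,E\}_G$ is not an exact functor of $E$, so one has to invoke the connectivity of the suspension spectra involved (vanishing of the homotopy Mackey functors $\conj{\pi}^G_{-1}$, valid for compact Lie groups) to obtain right-exact comparison sequences from a presentation of $E$ by wedges of orbits, and then compare the two right-exact, wedge-preserving functors on such a presentation. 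Second, the mixed product $\nabla$ is only gestured at; to make your co-Yoneda computation of $\conj{A}_{\quot GL}\nabla\jnoc{A}^{\quot GK}$ literal you need the explicit mixed-variance coend, e.g. $(\conj{T}\nabla\jnoc{S})(\quot GJ)=\int^{H,K}\{{\quot GK}_+,{\quot GJ}_+\smash{\quot GH}_+\}_G\tensor\conj{T}(\quot GH)\tensor\jnoc{S}(\quot GK)$, after which both stated properties of $\nabla$ follow exactly as for $\boxx$. Neither issue is a wrong approach; both are settled in \cite{costenoble}, which is also all that the paper itself relies on here.
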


It is particularly easy to define the products using the representing spectra of the theories as given by Theorem \ref{thm:ordinaryproperties}. The following is 
another summary of results from \cite{costenoble}, mainly sections 1.5 and 1.6. For a $G$-spectrum $X$, we recall the definition of the equivariant homotopy group functor
\[
 \conj{\pi}_n^G(X):\hat{\calO}\to\calA b,\;\conj{\pi}_n^G(X)(\quot GH)=\pi_n^K(X)=\{\Sigma^n{\quot GH}_+, X\}_G
\]
and the equivariant dual homotopy group functor
\[
 \jnoc{\tau}_n^G(X):\hat{\calO}\to\calA b,\;\jnoc{\tau}_n^G(X)(\quot GH)=\tau_n^K(X)=\{\setS^n,{\quot GH}_+\smash X\}_G.
\]

\begin{proposition}\label{prop:eilenbergspectra}
 Let $\conj{T}$ and $\conj{U}$ be contravariant Mackey functors, $\jnoc{S}$ be a covariant Mackey functor. The representing spectra $H\conj{T}$, $H\conj{U}$ and $H\jnoc{S}$ of
 ordinary homology and cohomology have the following properties.
 \begin{enumerate}[i)]
  \item
  \[
  \conj{\pi}_n^G(H\conj{T})\cong\begin{cases}
                                  \conj{T}&n=0\\
                                  0&\mbox{ else}
                                 \end{cases}
 \]
 
 \item
 \[
  \jnoc{\tau}_n^G(H\jnoc{S})\cong\begin{cases}
                                  \jnoc{S}&n=0\\
                                         0&\mbox{ else}
                                 \end{cases}
 \]

 \item Properties i) and ii) determine the spectra uniquely up to $G$-homotopy equivalence.

 \item For a subgroup $K$ of $G$, the $K$-spectrum $H\conj{T}\big|K$ represents $\conj{T}\big|K$ and the $W(K)$-spectrum $(H\conj{T})^K$ represents $\conj{T}^K$.

 \item For a subgroup $K$ of $G$ and $L$ the tangential representation at $[e]\in\quot GK$, the $K$-spectrum $\Sigma^LH\jnoc{S}$ represents $\jnoc{S}\big|K$.
 Furthermore, there is a $W(K)$-map $\Phi^K(H\jnoc{S})\to H(\jnoc{S}^K)$ which induces an isomorphism in zeroth equivariant dual homotopy.

 \item There is a $G$-map $H\conj{T}\smash H\conj{U}\to H(\conj{T}\Box\conj{U})$ which induces an isomorphism in zeroth equivariant homotopy.

 \item There is a $G$-map $H\conj{T}\smash H\jnoc{S}\to H(\conj{T}\nabla\jnoc{S})$ which induces an isomorphism in zeroth equivariant dual homotopy.  
 \end{enumerate} 
\end{proposition}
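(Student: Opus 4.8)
The plan is to dispose of i)--vii) one at a time, reducing each to the dimension axiom of Theorem~\ref{thm:ordinaryproperties}(iii), the Wirthm\"{u}ller and geometric fixed point properties (iv)--(v) of the same theorem, and the box product of Mackey functors, together with two uniqueness inputs from \cite{costenoble}: the Whitehead-type theorems for ordinary and for dual equivariant homotopy. Items i) and ii) are a matter of unwinding definitions. The spectra $H\conj{T}$, $H\conj{U}$, $H\jnoc{S}$ exist by Theorem~\ref{thm:ordinaryproperties}, and for any subgroup $H$ of $G$ one has, naturally in $\quot GH$ over $\hat{\calO}_G$,
\[
 \conj{\pi}_n^G(H\conj{T})(\quot GH)=\{\Sigma^n{\quot GH}_+,H\conj{T}\}_G\cong H_G^{-n}({\quot GH}_+;\conj{T})
\]
and
\[
 \jnoc{\tau}_n^G(H\jnoc{S})(\quot GH)=\{\setS^n,{\quot GH}_+\smash H\jnoc{S}\}_G\cong H^G_n({\quot GH}_+;\jnoc{S}),
\]
so the dimension axiom identifies these with $\conj{T}$, resp.\ $\jnoc{S}$, in degree $0$ and with $0$ otherwise. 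For iii), a $G$-map of spectra inducing an isomorphism on all $\conj{\pi}_*^G$ (resp.\ on all $\jnoc{\tau}_*^G$) is a $G$-homotopy equivalence; given two spectra satisfying i), resp.\ ii), I would build such a comparison map by the obstruction theory of \cite{costenoble}, realizing the identity of the degree-zero (dual) homotopy Mackey functor, all obstruction groups vanishing because the homotopy is concentrated in a single degree.

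For iv), the restriction functor $(-)\big|K:\calS_G\to\calS_K$ has $G_+\smash_K(-)$ as a left adjoint, so for a subgroup $L$ of $K$,
\[
 \conj{\pi}_n^K(H\conj{T}\big|K)(\quot KL)\cong\{\Sigma^n(G_+\smash_K{\quot KL}_+),H\conj{T}\}_G\cong\conj{\pi}_n^G(H\conj{T})(\quot GL),
\]
using $G_+\smash_K{\quot KL}_+\cong{\quot GL}_+$; this is $\conj{T}\big|K$ in degree $0$ and vanishes otherwise, so iii) identifies $H\conj{T}\big|K$ with $H(\conj{T}\big|K)$. For the fixed point half, I would combine the restriction map of Theorem~\ref{thm:ordinaryproperties}(v) with the computation, carried out on $\conj{\pi}_0^{W(K)}$, that the geometric fixed points of an Eilenberg--MacLane $G$-spectrum form the Eilenberg--MacLane $W(K)$-spectrum of the Mackey functor $\conj{T}^K$, as in Definition~1.5.6 of \cite{costenoble} (reducing to the normalizer when $K$ is not normal, exactly as in the definition of $\conj{T}^K$). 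Item v) is the covariant counterpart: the dual Wirthm\"{u}ller isomorphism of Theorem~\ref{thm:ordinaryproperties}(iv) carries a built-in $\setS^{-L}$-twist, and it is precisely this twist that forces the suspension $\Sigma^L$ in the statement that $\Sigma^LH\jnoc{S}$ represents $\jnoc{S}\big|K$; I would again compute $\jnoc{\tau}_0^K$ and invoke iii). The map $\Phi^K(H\jnoc{S})\to H(\jnoc{S}^K)$ is the natural one of \cite{costenoble}, and here the geometric fixed points need not be Eilenberg--MacLane on the nose, which is why I would claim only an isomorphism on $\jnoc{\tau}_0^{W(K)}$ rather than a homotopy equivalence.

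The multiplicative statements vi) and vii) use that the smash product of two $G$-spectra is naturally a $G\times G$-spectrum. Since $\{-\,,H\conj{T}\}_G\cong\conj{T}$ and $\{-\,,H\conj{U}\}_G\cong\conj{U}$ as Mackey functors, the property $\{-\,,E\}_G\boxtimes\{-\,,F\}_G\cong\{-\,,E\smash F\}_{G\times G}$ of the preceding proposition on the box product gives $\conj{\pi}_0^{G\times G}(H\conj{T}\smash H\conj{U})\cong\conj{T}\boxtimes\conj{U}$, and the analogous computation with suspensions shows the higher $\conj{\pi}_*^{G\times G}$ vanish; applying iii) over $G\times G$ produces a $G\times G$-equivalence $H\conj{T}\smash H\conj{U}\to H(\conj{T}\boxtimes\conj{U})$, and restricting to the diagonal yields $H\conj{T}\smash H\conj{U}\to H(\conj{T}\boxx\conj{U})$, which is an isomorphism on $\conj{\pi}_0^G$ by definition of the internal box product. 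Replacing $H\conj{U}$ by $H\jnoc{S}$, the homotopy functor by the dual homotopy functor, and $\boxtimes$ by the product $\nabla$, and using the corresponding part of that proposition, gives vii) in the same manner, now on $\jnoc{\tau}_0^G$. I expect the real obstacle to lie not in any of these individual identities --- each reduces to the dimension axiom plus an adjunction --- but in the two uniqueness inputs behind iii), v), and vi)--vii): the Whitehead theorem for dual equivariant homotopy is considerably more delicate than its ordinary analogue, and getting the box product K\"{u}nneth computation for the sphere right, including the $G\times G$ bookkeeping and the diagonal restriction, is where the genuine work sits; I would take both from sections~1.5 and~1.6 of \cite{costenoble}.
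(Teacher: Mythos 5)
The paper itself gives no proof of this proposition: it is explicitly presented as a summary of results from \cite{costenoble}, sections 1.5 and 1.6. So your proposal has to be judged on its own merits. Items i), ii), iii), the subgroup half of iv), and v) are fine as sketches; they are the dimension-axiom-plus-adjunction reductions one would expect, with the Whitehead-type uniqueness inputs deferred to \cite{costenoble}, which is no more than the paper itself does.

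There is, however, a genuine gap in your treatment of vi) and vii), and the same error infects the fixed-point half of iv). You claim that the higher homotopy Mackey functors of $H\conj{T}\smash H\conj{U}$ vanish, so that iii) applied over $G\times G$ produces an \emph{equivalence} onto an Eilenberg--MacLane spectrum. This is false already for $G=e$: $\pi_*(H\mathbb{F}_p\wedge H\mathbb{F}_p)$ is the dual Steenrod algebra, nonzero in arbitrarily high positive degrees. What is true is only that the smash product is bounded below and has $\conj{\pi}_0^{G}(H\conj{T}\smash H\conj{U})\cong\conj{T}\boxx\conj{U}$; the map asserted in vi) is then the projection onto the zeroth Postnikov stage (i.e.\ it is obtained by killing the higher homotopy), and this is exactly why the proposition claims only an isomorphism in zeroth homotopy rather than a homotopy equivalence. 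The same correction is needed for vii), now with dual homotopy. Analogously, your claim in iv) that the geometric fixed points of an Eilenberg--MacLane $G$-spectrum are the Eilenberg--MacLane $W(K)$-spectrum of $\conj{T}^K$ is false in general (for instance $\Phi^{C_2}H\underline{\mathbb{F}}_2$ has polynomial homotopy); only the statement on $\conj{\pi}_0$ holds, which is precisely why part v) is formulated as a $\pi_0$-isomorphism along a map $\Phi^K(H\jnoc{S})\to H(\jnoc{S}^K)$ --- you notice this subtlety in the covariant case but then assert the strong form in the contravariant case, where it fails just as badly. The unqualified ``represents'' in iv), written with the notation $(H\conj{T})^K$ rather than $\Phi^K(H\conj{T})$, cannot refer to geometric fixed points; identifying the two, as you do, breaks the argument, whereas for the Lewis--May fixed-point spectrum the adjunction $[Y,(H\conj{T})^K]_{\quot GK}\cong[\eps^*Y,H\conj{T}]_G$ does give homotopy concentrated in degree zero.
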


Parts vi) and vii) will be used to manufacture the products and pairings we will use later.

\begin{definition}
 Let $X, Y, Z$ be pointed $G$-spaces, $\alpha, \beta\in RO(G)$ and let $d:Z\to X\smash Y$ be a $G$-map. We define cohomology cross products
  \[
   \times:H^\alpha_G(X;\conj{T})\tensor H^\beta_G(Y;\conj{U})\to H^{\alpha+\beta}_G(X\smash Y;\conj{T}\Box\conj{U})
  \]
  \[
   \times:H^\alpha_G(X;\conj{T})\tensor\calH^\beta_G(Y;\jnoc{S})\to\calH^{\alpha+\beta}_G(X\smash Y;\conj{T}\nabla\jnoc{S})
  \]
  as follows. For $\xi:X\to\setS^\alpha\smash H\conj{T}$, $\eta:Y\to\setS^\beta\smash H\conj{U}$, the product is represented by
  \[
   X\smash Y\slra{\xi\smash\eta}\setS^\alpha\smash H\conj{T}\smash\setS^\beta\smash H\conj{U}\slra{\cong}\setS^{\alpha+\beta}\smash(H\conj{T}\smash H\conj{U})\to\setS^{\alpha+\beta}\smash H(\conj{T}\Box\conj{U}),
  \]
  where the last map is induced by the map from Proposition \ref{prop:eilenbergspectra} vi).

  For $\xi:X\to\setS^\alpha\smash H\conj{T}$ and $\eta:Y\to\setS^\beta\smash H\jnoc{S}$, the product is represented by
  \[
   X\smash Y\slra{\xi\smash\eta}\setS^\alpha\smash H\conj{T}\smash\setS^\beta\smash H\jnoc{S}\slra{\cong}\setS^{\alpha+\beta}\smash(H\conj{T}\smash H\jnoc{S})\to\setS^{\alpha+\beta}\smash H(\conj{T}\nabla\jnoc{S}),
  \]
  the last map now being induced by Proposition \ref{prop:eilenbergspectra} vii).
  
  We define equivariant cup products
  \[
   \cup_d:H^\alpha_G(X;\conj{T})\tensor H^\beta_G(Y;\conj{U})\to H^{\alpha+\beta}_G(Z;\conj{T}\Box\conj{U}),
  \]
  \[
   \cup_d:H^\alpha_G(X;\conj{T})\tensor\calH^\beta_G(Y;\jnoc{S})\to\calH^{\alpha+\beta}_G(Z;\conj{T}\nabla\jnoc{S})
  \]
  by following up the cross products with the map induced by $d$ in cohomology.
 
  We define pairings
  \[
   \scalar{-\,,\,-}_d:H^\alpha_G(Y; \conj{T})\tensor H_\beta^G(Z;\jnoc{S})\to H_{\beta-\alpha}^G(X;\conj{T}\nabla\jnoc{S}),
  \]
  \[
   \scalar{-\,,\,-}_d:\calH^\alpha_G(Y; \jnoc{S})\tensor\calH^G_\beta(Z;\conj{T})\to H_{\beta-\alpha}^G(X;\conj{T}\nabla\jnoc{S}),
  \]
  \[
   \scalar{-\,,\,-}_d:H^\alpha_G(Y; \conj{T})\tensor\calH^G_\beta(Z;\conj{U})\to\calH_{\beta-\alpha}^G(X;\conj{T}\Box\conj{U}),
  \]
  as follows. For $\xi:Y\to\setS^\alpha\smash H\conj{T}$ and $y:\setS^\beta\to Z\smash H\jnoc{S}$, their pairing is defined to be represented by
  \[
   \setS^\beta\slra{y}Z\smash H\jnoc{S}\slra{d\smash\id}X\smash Y\smash H\jnoc{S}\slra{\id\smash\xi\smash\id}X\smash\setS^\alpha\smash H\conj{T}\smash H\jnoc{S}\to
   \setS^\alpha\smash X\smash H(\conj{T}\nabla\jnoc{S}),
  \]
  where the last map again is induced by Proposition \ref{prop:eilenbergspectra} vii). The other two pairings are defined analogously.
\end{definition}

The products and pairings are natural in the following sense.

\begin{proposition}[\cite{costenoble}, Theorems 1.6.15 and 1.6.17] 
 Let $X, Y, Z$ and $X', Y', Z'$ be $G$-spaces and $g:X\to X'$, $h:Y\to Y'$, $f:Z\to Z'$ be $G$-maps. Let $d:Z\to X\smash Y$ and $d':Z'\to X'\smash Y'$ be $G$-maps.
 Then for Mackey functors $\conj{U}, \conj{T}, \jnoc{S}$ and $x'\in H^*_G(X';\conj{T}), y'\in H^*_G(Y';\conj{U})$ we have
 \[
  g^*(x')\times h^*(y')=(g\smash h)^*(x'\times y').
 \]
 Consequently, if the diagram
 \[
  \xymatrix{
           Z\ar[r]^d\ar[d]_f&X\smash Y\ar[d]^{g\smash h}\\
           Z'\ar[r]^{d'}&X'\smash Y'
           }
 \]
 commutes, then
 \[
  g^*(x')\cup_dh^*(y')=f^*(x'\cup_{d'} y').
 \]
 Furthermore, if $z\in H_*^G(Z;\jnoc{S})$, then
 \[
  \scalar{y', f_*(z)}_{d'}=g_*\scalar{h^*(y'), z}_d.
 \]
 Similar formulas hold for the products and pairings involving the dual theories.

 In addition, the products and pairings respect restriction to subgroups and fixed spaces.
\end{proposition}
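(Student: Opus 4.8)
The plan is to reduce every assertion to the level of representing spectra, where naturality becomes a formal consequence of the functoriality of the smash product in $\calS_G$ and of composition of maps of $G$-spectra. First I would record that $H^\alpha_G(X;\conj{T})=\{X,\setS^\alpha\wedge H\conj{T}\}_G$ and $H_\beta^G(Z;\jnoc{S})=\{\setS^\beta,Z\wedge H\jnoc{S}\}_G$, and that for $G$-maps $g:X\to X'$ and $f:Z\to Z'$ the induced maps $g^*$ and $f_*$ act on representatives by precomposition with $g$ and by postcomposition with $f\wedge\id$, respectively. Then, if $\xi'$ represents $x'\in H^\alpha_G(X';\conj{T})$ and $\eta'$ represents $y'\in H^\beta_G(Y';\conj{U})$, a representative of $(g^*x')\times(h^*y')$ is the composite
\[
 X\wedge Y\xrightarrow{g\wedge h}X'\wedge Y'\xrightarrow{\xi'\wedge\eta'}\setS^\alpha\wedge H\conj{T}\wedge\setS^\beta\wedge H\conj{U}\xrightarrow{\ \cong\ }\setS^{\alpha+\beta}\wedge H(\conj{T}\boxx\conj{U}),
\]
whose last arrow is the shuffle followed by the comparison map of Proposition~\ref{prop:eilenbergspectra}(vi). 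Since the shuffle only permutes the sphere and Eilenberg--MacLane factors, and the comparison map touches neither $X$ nor $Y$, this is at the same time a representative of $(g\wedge h)^*(x'\times y')$; this yields the cross product identity.

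Next I would handle the cup products. As $\cup_d$ is by definition the cross product followed by $d^*$ in cohomology, and the commuting square gives $d'\circ f=(g\wedge h)\circ d$, functoriality of induced maps in cohomology yields $d^*\circ(g\wedge h)^*=(d'\circ f)^*=f^*\circ {d'}^*$, so that
\[
 g^*(x')\cup_d h^*(y')=d^*\big((g\wedge h)^*(x'\times y')\big)=f^*\big({d'}^*(x'\times y')\big)=f^*(x'\cup_{d'}y').
\]
For the pairing $\scalar{-\,,\,-}_d$ I would spell out the defining composite with $\xi=\xi'\circ h$ representing $h^*(y')$ and $y$ representing $z$, and then apply $g_*$ by postcomposing with $\setS^\alpha\wedge g\wedge\id$; using the identities $(d'\wedge\id)\circ(f\wedge\id)=(g\wedge h\wedge\id)\circ(d\wedge\id)$ and $(\id\wedge\xi'\wedge\id)\circ(g\wedge h\wedge\id)=(g\wedge\xi'\wedge\id)$ one recognises the outcome, with $(f\wedge\id)\circ y$ substituted for $y$, as a representative of $\scalar{y',f_*(z)}_{d'}$. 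The two remaining pairings, and the corresponding statements for the dual theories, follow by exactly the same bookkeeping, invoking Proposition~\ref{prop:eilenbergspectra}(vii) wherever the coefficient product $\nabla$ occurs in place of $\boxx$.

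Compatibility with restriction to a subgroup $K$ I would deduce from the fact that $(-)|K:\calS_G\to\calS_K$ is symmetric monoidal and hence carries each of the spectrum-level composites above to the corresponding composite of $K$-spectra; one then identifies $(H\conj{T})|K$ with $H(\conj{T}|K)$ and $H\jnoc{S}$ with (a suitable suspension of) its restriction via Proposition~\ref{prop:eilenbergspectra}(iv) and (v), together with $(\conj{T}\boxx\conj{U})|K\cong(\conj{T}|K)\boxx(\conj{U}|K)$ and $(\conj{T}\nabla\jnoc{S})|K\cong(\conj{T}|K)\nabla(\jnoc{S}|K)$ from \cite{costenoble}, after which the formulas read off. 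The fixed-set case is identical, using that $\Phi^K$ (resp.\ the $K$-fixed-point functor on $G$-spaces) is symmetric monoidal and that the comparison maps of Proposition~\ref{prop:eilenbergspectra}(iv) and (v) are multiplicative.

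The hard part is not the naturality squares, which are formal once set up, but the structural input they rest on: that the shuffle isomorphisms permuting sphere factors past Eilenberg--MacLane spectra are natural and satisfy the evident coherence, that the comparison maps $H\conj{T}\wedge H\conj{U}\to H(\conj{T}\boxx\conj{U})$ and $H\conj{T}\wedge H\jnoc{S}\to H(\conj{T}\nabla\jnoc{S})$ are natural in their Mackey-functor arguments, and that both are compatible with restriction to subgroups and to fixed sets. Once these facts --- essentially the content of sections 1.5 and 1.6 of \cite{costenoble} --- are available, the proposition reduces to the diagram chases sketched above.
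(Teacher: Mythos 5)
Your argument is essentially correct, but note that the paper itself offers no proof of this proposition: it is imported verbatim from Costenoble--Waner (Theorems 1.6.15 and 1.6.17), so there is no in-paper argument to compare against. Given the paper's represented-level definitions of the cross product, cup product and pairings, your diagram chase is the natural derivation: the cross-product identity is immediate because $g^*$ and $h^*$ act by precomposition and the shuffle and comparison maps do not involve the $X$- and $Y$-factors; the cup-product formula follows formally from $d'\circ f=(g\smash h)\circ d$; and the pairing formula is the chase you describe (one small slip: $(\id\smash\xi'\smash\id)\circ(g\smash h\smash\id)$ equals $g\smash(\xi'\circ h)\smash\id$, not $g\smash\xi'\smash\id$, and you also tacitly use that postcomposition with $\id\smash g\smash\id$ commutes with the final structural map, which is fine since that map leaves the $X$-factor alone). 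You also correctly identify where the real content sits: naturality and coherence of the comparison maps $H\conj{T}\smash H\conj{U}\to H(\conj{T}\boxx\conj{U})$ and $H\conj{T}\smash H\jnoc{S}\to H(\conj{T}\nabla\jnoc{S})$, their compatibility with restriction to subgroups (including the $\Sigma^L$-shift for covariant coefficients) and with geometric fixed points, and the monoidality of the restriction and fixed-point functors. Those structural facts are precisely what the cited theorems of \cite{costenoble} provide, so your proof sketch is an acceptable unpacking of the citation rather than a genuinely different route.
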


If the map $d$ is a diagonal $d:X\to X\smash X$, or closely related to such a diagonal, we also use the notation $\xi\cap_dx$ for $\scalar{\xi,x}_d$ and speak of the 
cap product. 

For the rest of the paper, we use the following convention. Since we will mostly be interested in the coefficient systems $\conj{A}_{\quot GG}$ and $\jnoc{A}^{\quot GG}$, we will write
\[
 H^G_\alpha(X)=H^G_\alpha(X;\jnoc{A}^{\quot GG}),\;H_G^\alpha(X)=H_G^\alpha(X;\conj{A}_{\quot GG}),
\]
and similarly for the dual theories. We will denote the spectrum $H\conj{A}_{\quot GG}$ representing ordinary cohomology with Burnside coefficients with $\calH$. Along these 
lines, we also suppress the natural isomorphisms $\conj{A}_{\quot GG}\Box\conj{A}_{\quot GG}\cong\conj{A}_{\quot GG}$ and 
$\conj{A}_{\quot GG}\nabla\jnoc{A}^{\quot GG}\cong\jnoc{A}^{\quot GG}$.

Occasionally we will need associativity of the cup product. For simplicity, we only state it for cohomology with Burnside coefficients. There is an obvious more general version. The proof 
is immediate from the definition. 

\begin{proposition}\label{prop:cupassociativity}
 Let $A, B, C, X, Y, Z$ be $G$-spaces and let $d_1:A\to X\smash Y$, $d_2:B\to A\smash Z$, $d_3:C\to Y\smash Z$ and $d_4:B\to X\smash C$ be $G$-maps such that the diagram
 \[
  \xymatrix{
           B\ar[r]^{d_2}\ar[d]^{d_4}&A\smash Z\ar[d]^{d_1\smash\id}\\
           X\smash C\ar[r]^{\id\smash d_3}&X\smash Y\smash Z
           }
 \]
 commutes. Then 
 \[
  \xymatrix{
           H^\alpha_G(X)\tensor H^\beta_G(Y)\tensor H^\gamma_G(Z)\ar[rr]^{\cup_{d_1}\tensor\id}\ar[d]^{\id\tensor\cup_{d_3}}&&H^{\alpha+\beta}_G(A)\tensor H^\gamma_G(Z)\ar[d]^{\cup_{d_2}}\\
           H^\alpha_G(X)\tensor H^{\beta+\gamma}_G(C)\ar[rr]^{\cup_{d_4}}&&H^{\alpha+\beta+\gamma}_G(B)
           }
 \]
 commutes.
\end{proposition}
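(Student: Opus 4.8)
The plan is to derive the statement from the naturality of the cohomology cross product (the preceding proposition) together with the associativity of the cross product itself, the latter being a formal consequence of the associativity of the smash product of $G$-spectra and the homotopy-associativity of the multiplication $\mu\colon\calH\smash\calH\to\calH$ furnished by Proposition \ref{prop:eilenbergspectra} vi).

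First I would chase a generator $x\tensor y\tensor z$ of $H^\alpha_G(X)\tensor H^\beta_G(Y)\tensor H^\gamma_G(Z)$ around the two paths of the square. Along the top, $\cup_{d_1}\tensor\id$ produces $d_1^*(x\times y)\tensor z$, and $\cup_{d_2}$ then gives $d_2^*\bigl(d_1^*(x\times y)\times z\bigr)$. Applying naturality of the cross product to the map $d_1\smash\id\colon A\smash Z\to X\smash Y\smash Z$ (and writing $z=\id_Z^*z$) turns this into $d_2^*(d_1\smash\id)^*\bigl((x\times y)\times z\bigr)=\bigl((d_1\smash\id)\circ d_2\bigr)^*\bigl((x\times y)\times z\bigr)$. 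Along the bottom one obtains, in exactly the same way, $\bigl((\id\smash d_3)\circ d_4\bigr)^*\bigl(x\times(y\times z)\bigr)$. The commuting square in the hypothesis says precisely that $(d_1\smash\id)\circ d_2$ and $(\id\smash d_3)\circ d_4$ are the same $G$-map $B\to X\smash Y\smash Z$, so the proposition is reduced to the identity $(x\times y)\times z=x\times(y\times z)$ in $H^{\alpha+\beta+\gamma}_G(X\smash Y\smash Z)$, i.e. associativity of the cross product.

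For that last point I would unwind the spectrum-level definition. If $\xi,\eta,\zeta$ represent $x,y,z$, then both bracketings of the triple cross product are represented by $\xi\smash\eta\smash\zeta$ followed by the canonical shuffle $\setS^\alpha\smash\calH\smash\setS^\beta\smash\calH\smash\setS^\gamma\smash\calH\to\setS^{\alpha+\beta+\gamma}\smash(\calH\smash\calH\smash\calH)$ and then a twofold application of $\mu$; the two bracketings differ only in the order in which the shuffle and the copies of $\mu$ are performed. The coherence of the symmetric monoidal smash product of $G$-spectra disposes of the shuffle, so the heart of the matter is that $\mu\circ(\mu\smash\id)$ and $\mu\circ(\id\smash\mu)$ are $G$-homotopic as maps $\calH\smash\calH\smash\calH\to\calH$. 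Both induce on $\conj{\pi}_0^G$ the threefold box-product multiplication of $\conj{A}_{\quot GG}$, and the latter is associative since $\conj{A}_{\quot GG}$ is a commutative ring object for $\boxx$; since a degree-preserving map between Eilenberg--MacLane $G$-spectra of Mackey functors is detected by its effect on zeroth equivariant homotopy (the uniqueness phenomenon behind Proposition \ref{prop:eilenbergspectra}, see \cite{costenoble}), the two composites agree up to $G$-homotopy.

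The only step I expect to require genuine care is this homotopy-associativity of $\mu$: one must check that the pairing supplied by Proposition \ref{prop:eilenbergspectra} vi) is not merely an isomorphism on $\pi_0$ but an honestly homotopy-associative multiplication of spectra. Granting that morphisms between Eilenberg--MacLane spectra of Mackey functors are governed by their zeroth homotopy --- so that associativity of $\boxx$ for the Burnside Mackey functor propagates to $\mu$ --- every remaining step is routine bookkeeping with the definitions, which is presumably why the assertion is described as immediate.
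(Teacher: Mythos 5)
Your argument is correct and is essentially the unwinding of the definitions that the paper itself intends (the paper offers no written proof beyond ``immediate from the definition''): naturality of the cross product plus the hypothesis square reduces everything to associativity of the cross product, which comes down to homotopy-associativity of $\mu:\calH\smash\calH\to\calH$ detected on $\conj{\pi}_0^G$. The only phrasing to tighten is that $\calH\smash\calH\smash\calH$ is not itself an Eilenberg--MacLane spectrum, so you should invoke the detection principle for maps from a \emph{connective} $G$-spectrum into an Eilenberg--MacLane spectrum (the same fact underlying Proposition \ref{prop:eilenbergspectra} vi)), rather than uniqueness of maps between Eilenberg--MacLane spectra.
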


\section{Orientability and Lefschetz Classes}
We start this chapter by a general overview of equivariant orientability as developed in \cite{may2}.

Let $B$ be a $G$-space. The equivariant fundamental groupoid $\Pi_GB$ is defined as the category with objects $G$-maps $\varphi:\quot GH\to B$, with $H$ any subgroup of 
$G$. A morphism from $\varphi:\quot GH\to B$ to $\psi:\quot GK\to B$ is a pair $([\omega], \alpha)$, where $\alpha:\quot GH\to\quot GK$ is a $G$-map, 
$\omega:\quot GH\times\incc{0,1}\to B$ is a $G$-homotopy between $\varphi$ and $\psi\circ\alpha$, and $[\omega]$ is the $G$-homotopy class of $\omega$ with fixed endpoints.

We can also think of $\varphi$ and $\psi$ as points in $B^H$, $B^K$, respectively, and $[\omega]$ is a homotopy class of paths in $B^H$ connecting $\varphi$ and $\psi\circ\alpha$,
where the latter denotes the element $\psi\circ\alpha([e])\in B^H$.

If $p:E\to B$ is a $G$-vector bundle, the pullback construction induces a functor $p^*:\Pi_GB\to\conj{\calV}_G$, where $\conj{\calV}_G$ is the category of $G$-vector bundles over orbits
and $G$-homotopy classes of bundle maps. Precisely, this is done as follows. On objects, $p^*(\varphi)=\varphi^*(p)$ is the pullback bundle via the morphism $\varphi$.
For a morphism $([\omega], \alpha)$, we have the diagram
\[
 \xymatrix{
          \varphi^*(p)\ar[r]\ar[d]^{i_0}&E\ar[d]^p\\
          \varphi^*(p)\times\incc{0,1}\ar[ru]^{\exists\,\tilde{\omega}}\ar[r]\ar[d]&B\\
          \quot GH\times\incc{0,1}\ar[ru]_\omega&
          }
\]
The horizontal map in the middle is a homotopy defined by commutativity of the lower triangle, and by the fibration property of $G$-vector bundles, there exists an equivariant 
lift $\tilde{\omega}$ of that homotopy. $\tilde{\omega}_1$ is a bundle map covering $\psi\circ\alpha$ and it induces a map $\varphi^*(p)\to(\psi\circ\alpha)^*(p)$.

We can extend our definition of $RO(G)$ to $G$-vector bundles over orbits. We choose a set of representatives $W_i$, $i\in I$, of $G$-isomorphism classes of irreducible 
$H$-representations for every subgroup $H$ of $G$. Then we fix an isomorphism from any bundle over $\quot GH$ to a bundle of the form 
\[
G\times_H(W_{i_1}^{k_1}\oplus\dots\oplus W_{i_n}^{k_n}).
\]
We can furthermore assume that the underlying spaces of the $W_i$ are just real Euclidean spaces, and that the above isomorphism is the identity when the bundle is a bundle
of the given form. This specifies a functor from the category $\conj{\calV}_G$ to the category $\calV_G$ of bundles of the form as above. The pullback functor $p^*$ obviously 
extends to a functor into $\calV_G$.

\begin{definition}[\cite{may2}, Definition 2.8]
 Let $p:E\to B$ be a $G$-vector bundle. $p$ is said to be $G$-orientable, if the pullback functor $p^*:\Pi_GB\to\calV_G$, whenever $([\omega], \alpha)$, $([\omega'], \alpha')$ are two
 morphisms from $\varphi$ to $\psi$ and $\alpha=\alpha'$, satisfies that $p^*([\omega], \alpha)=p^*([\omega'], \alpha)$, i.e. $p^*$ is independent of the path class $[\omega]$ for fixed 
 $\alpha$.

 A $G$-manifold $M$ is called $G$-orientable, if its tangential bundle is $G$-orientable.
\end{definition}

To make use of the equivariant duality theory of chapter one in \cite{costenoble}, we restrict our attention to a special case of $G$-manifolds, an assumption which we will later
drop. It may be interesting to see whether one obtains the same invariants by using the twisted theory of chapter four of \cite{costenoble}, which establishes duality for 
arbitrary $G$-manifolds.

The bundles and manifolds we will be looking at are defined as follows.

\begin{definition}
 Let $p:E\to B$ be a $G$-vector bundle. $p$ is said to be a $V$-vector bundle, if every fibre $p^{-1}(x)$ over a point $x$ is $G_x$-isomorphic to the restriction of a 
 $G$-representation $V$.

 A $G$-manifold $M$ is said to be a $V$-manifold, if its tangential bundle is a $V$-vector bundle.
\end{definition}

An alternative and more conceptual description of $V$-vector bundles is that a $V$-vector bundle is a map $p:E\to B$ such that every point $x\in B$ has a neighbourhood $U$ such 
that $p^{-1}(U)$ is $G_x$-bundle isomorphic to $U\times V$. A $V$-manifold then is a $G$-manifold $M$ such that every point $x\in M$ is contained in a chart $G_x$-isomorphic to 
$V$.

The notions of Thom classes and fundamental classes for $V$-bundles and $V$-manifolds are straightforward generalizations of the non-equivariant situation.

\begin{definition} Let $p:E\to B$ be a $V$-vector bundle and $M$ be a $V$-manifold.
 \begin{enumerate}[i)]
 \item Let $T(p)$ be the Thom space of $p$. A Thom class for $p$ is a class $\tau\in H^V_G(T(p))$ such that for every $G$-map 
 $\varphi:\quot GH\to B$, the image of $\tau$ under the maps
 \begin{eqnarray*}
  H^V_G(T(p))&\to&H^V_G(T(\varphi^*(p)))\\
             &\cong&H^V_G(G_+\wedge_H\setS^V)\\
             &\cong&H^V_H(\setS^V)\\
             &\cong&A(H)
 \end{eqnarray*}
 is a generator (i.e. a unit).

 \item If $M$ is compact, a fundamental class for $M$ is a class $\calO\in\calH_V^G(M_+)$ such that the following holds. For any point $x\in M$, let $\nu_x$ be a normal bundle
 of the embedding $Gx\to M$ and let $T(\nu_x)$ be the Thom space of this embedding. Let $L_x$ be the tangential space of $Gx$ at $x$ and $\psi:M_+\to T(\nu_x)$ be the 
 Pontryagin-Thom map. Then the image of $\calO$ under the maps
 \begin{eqnarray*}
  \calH_V^G(M_+)&\slra{\psi_*}&\calH_V^G(T(\nu_x))\\
                &\cong&\calH_V^G(G_+\wedge_H\setS^{V-L_x})\\
                &\cong&\calH_V^H(\setS^V)\\
                &\cong&A(H)
 \end{eqnarray*}
 is a generator.

 \item If $M$ is compact with boundary, a fundamental class for $M$ is a class $\calO\in\calH_V^G(\quot M{\partial M})$ such that for any point $x\in M\setminus\partial M$ with
 normal bundle $\nu_x$, Thom space $T(\nu_x)$, $L_x$ and $\psi:\quot M{\partial M}\to T(\nu_x)$ as above, the image of $\calO$ in
 \begin{eqnarray*}
  \calH_V^G(\quot M{\partial M})&\slra{\psi_*}&\calH_V^G(T(\nu_x))\\
                &\cong&\calH_V^G(G_+\wedge_H\setS^{V-L_x})\\
                &\cong&\calH_V^H(\setS^V)\\
                &\cong&A(H)
 \end{eqnarray*}
 is a generator.

 \item If $N\subseteq M$ is an orientable compact $W$-submanifold of the compact $V$-manifold $M$, we define the fundamental class $\calO^M_N$ of $N$ in $M$ to be the image 
 ${i^M_N}_*(\calO_N)\in\calH_W^G(M_+)$ of the fundamental class $\calO_N\in\calH_W^G(N_+)$ under the inclusion $i^M_N:N\to M$.
 \end{enumerate}
\end{definition}

We prove existence of fundamental classes by using the equivariant Thom isomorphism theorem of \cite{costenoble}. More precisely, we will show that a fundamental class exists if 
and only if a Thom class for the normal bundle of an embedding into a $G$-representation exists. Here is the Thom isomorphism result.

\begin{theorem}[\cite{costenoble}, Theorem 1.7.5]\label{thm:thomiso}
 Let $p:E\to B$ be a $V$-vector bundle. $p$ is $G$-orientable if and only if it has an equivariant Thom class $\tau\in H^V_G(T(p))$. In that case, the map
\[
 H^\alpha_G(B)\to H^{\alpha+V}_G(T(p)),\;a\mapsto a\cup_d\tau
\]
is an isomorphism, where $d:T(p)\to B_+\wedge T(p)$ is the Thom diagonal.
\end{theorem}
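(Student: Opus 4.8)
The plan is to prove both the equivalence (orientability $\Leftrightarrow$ existence of a Thom class) and the isomorphism statement simultaneously, by reducing everything to the case of a bundle over a single orbit via a $G$-CW induction on the base $B$, after which it is a formal consequence of the Wirthm\"{u}ller and suspension isomorphisms of Theorem \ref{thm:ordinaryproperties}.

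First I would treat the local model. After fixing the identifications of $\calV_G$, a $V$-vector bundle over an orbit $\quot GH$ is $G\times_HV$, with Thom space $T(p)\cong G_+\smash_H\setS^V$. Combining the Wirthm\"{u}ller isomorphism (Theorem \ref{thm:ordinaryproperties}(iv)), the suspension isomorphism (Theorem \ref{thm:ordinaryproperties}(ii)) and $\conj{A}_{\quot GG}|H\cong\conj{A}_{\quot HH}$, one computes $H^\alpha_G(T(p))\cong H^{\alpha|H-V|H}_H(\setS^0;\conj{A}_{\quot HH})$; in particular $H^V_G(T(p))\cong A(H)$, so a Thom class over an orbit always exists and is simply a choice of unit in $A(H)$ (this matches the defining property of a Thom class, and is consistent with every bundle over an orbit being $G$-orientable). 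Under these identifications the Thom diagonal $d\colon T(p)\to{\quot GH}_+\smash T(p)$ becomes the suspension of the reduced diagonal, so the map $a\mapsto a\cup_d\tau$ is identified with the suspension isomorphism followed by multiplication by the unit $\tau$; using the unit property $\conj{A}_{\quot HH}\boxx\conj{T}\cong\conj{T}$ (the $H$-analogue of the identity quoted in Section~2) this multiplication is invertible, so $-\cup_d\tau$ is an isomorphism over orbits.

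Next I would globalize. Choosing a $G$-CW structure on $B$, the inductive step fits into the cofibre sequence relating $p$ restricted to $B^{(n-1)}$, to $B^{(n)}$, and to the $n$-cells $\coprod\,\quot GH\times D^n\simeq\coprod\,\quot GH$; applying the Thom space construction turns this into a cofibre sequence of Thom spaces, and since $H^\alpha_G(-)$ is exact on cofibre sequences and sends wedges to products (Theorem \ref{thm:ordinaryproperties}(i)), a five-lemma argument promotes the isomorphism $-\cup_d\tau$ from one skeleton to the next, once a compatible restriction of $\tau$ is available. This reduces the whole theorem to the assertion that the orbitwise Thom classes can be glued to a global class $\tau\in H^V_G(T(p))$ exactly when $p$ is $G$-orientable. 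Here I would run an obstruction-theoretic argument: the restriction of a putative Thom class along $\varphi\colon\quot GH\to B$ is a generator $\tau_\varphi\in H^V_G(T(\varphi^*p))\cong A(H)$, a morphism $([\omega],\alpha)$ of $\Pi_GB$ acts on these generators precisely through the bundle map $p^*([\omega],\alpha)$, and the assignment $\varphi\mapsto H^V_G(T(\varphi^*p))$ thereby defines a local coefficient system on $B$; the obstruction and difference classes for extending $\tau$ over successive skeleta live in Bredon cohomology of $B$ with coefficients in (the units of) this system. The $V$-bundle hypothesis guarantees this system takes values uniformly in copies of a single $A(H)$ with the $RO(G)$-degree shifted uniformly by $V$, and $G$-orientability is exactly the statement that the system is the constant one, $\conj{A}_{\quot GG}$, which kills the obstructions and allows the difference classes to be arranged; conversely a global $\tau$ forces the system to be constant, hence $p$ to be $G$-orientable.

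The main obstacle is this last point: making precise the dictionary between the homotopy-theoretic notion of $G$-orientability — a coherence condition on the pullback functor $p^*\colon\Pi_GB\to\calV_G$ over the fundamental groupoid — and the cohomological one, namely the existence of $\tau$. One must identify the correct local coefficient system on $B$, verify that the morphisms of $\Pi_GB$ act on orbitwise Thom classes through $p^*$, and check that the obstruction and difference classes for a global Thom class are genuinely computed by this system; it is exactly the $V$-bundle/$V$-manifold hypothesis that prevents a genuine $RO(G)$-grading twist from appearing and lets the comparison with the constant functor $\conj{A}_{\quot GG}$ even be stated. Once that is in place, the orbit computation and the skeletal five-lemma induction are routine given Theorem \ref{thm:ordinaryproperties}.
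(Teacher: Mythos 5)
Two remarks before the substance: the paper itself offers no proof of this statement — it is imported verbatim from \cite{costenoble} (Theorem 1.7.5) — so there is no internal argument to compare yours against, and your proposal has to stand on its own. The Thom-isomorphism half of your plan is fine in outline and is indeed the standard argument: over an orbit the bundle is $G\times_H(V|H)$ with Thom space $G_+\wedge_H\setS^V$, the Wirthm\"{u}ller and suspension isomorphisms give $H^V_G(G_+\wedge_H\setS^V)\cong A(H)$, a Thom class restricts there to a unit by definition, cup product with a unit is invertible, and a skeletal induction with the five lemma (plus a $\lim^1$ argument for infinite complexes, and the routine check that $-\cup_d\tau$ is compatible with the long exact sequences of the skeletal cofibrations) globalizes this.

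The genuine gap is exactly where you place it, and it is not a technicality you can defer: the equivalence ``$p$ is $G$-orientable if and only if a Thom class exists'' is the content of the theorem, and your obstruction-theoretic scheme would not deliver it as stated. Even if the orbitwise system $\varphi\mapsto H^V_G(T(\varphi^*p))$ were literally the constant system, the obstruction and difference classes for extending a cohomology class over successive skeleta live in higher Bredon cohomology groups of $B$, which have no reason to vanish; so ``orientability kills the obstructions'' is unjustified. The mechanism that actually makes the theorem true is a concentration statement, not general obstruction theory: fibrewise the Thom space has no cohomology below degree $V$, so $H^V_G(T(p))$ is identified with the degree-zero ``sections'' of the orbitwise system — in Costenoble--Waner's treatment this is packaged by giving $T(p)$ a $G$-CW($V$) structure whose $(V+n)$-cells correspond to the $n$-cells of $B$, and $G$-orientability is precisely what makes that cellular identification coherent. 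Establishing this edge identification is the work your outline omits. The converse direction is likewise not formal: a global $\tau$ only shows that the monodromy of a loop acts trivially on a generator of $A(H)$, i.e.\ has stable degree one, whereas $G$-orientability in the sense of the definition requires the monodromy to be trivial in $\calV_G$, i.e.\ on $G$-homotopy classes of bundle maps, which is a priori finer than its stable degree; your sentence ``a global $\tau$ forces the system to be constant, hence $p$ to be $G$-orientable'' asserts rather than proves this comparison.
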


The connection between Thom classes and fundamental classes is laid bare with the next proposition, for which we need another result first, relating pullback bundles and normal
bundles of embeddings.
\begin{lemma}
 Let $M$ be a $V$-manifold and $\varphi:\quot GH\to M$ be a $G$-embedding. Let $M\to W$ be a $G$-embedding of $M$ into an orthogonal $G$-representation $W$ with normal bundle 
 $\nu$. Furthermore, let $\nu_\varphi$ be the normal bundle of the embedding $\varphi$. Then the Spanier-Whitehead dual of $T\nu_\varphi$ is $\Sigma^{-W}T(\varphi^*\nu)$.
\end{lemma}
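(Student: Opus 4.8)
The plan is to identify $T\nu_\varphi$ as a small piece of the Thom space $T\nu$ via a tubular neighbourhood, and then read off its Spanier--Whitehead dual from the standard duality between a manifold embedded in a representation and the Thom space of its normal bundle. First I would recall the non-equivariant-style input, made equivariant: if $N\subseteq M$ is a closed $G$-submanifold and $M\hookrightarrow W$ is a $G$-embedding into an orthogonal representation with normal bundle $\nu$, then by the equivariant tubular neighbourhood theorem $N$ has a $G$-tubular neighbourhood in $W$ whose total space is the total space of $\varphi^*\nu\oplus\nu_\varphi$ (the normal bundle of $N$ in $W$ splits $G_x$-equivariantly as the normal bundle of $N$ in $M$, which is $\nu_\varphi$ after pulling back along $\varphi$, plus the restriction to $N$ of the normal bundle of $M$ in $W$, which is $\varphi^*\nu$). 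Collapsing the complement gives a Pontryagin--Thom map $\setS^W\to T(\varphi^*\nu\oplus\nu_\varphi)\cong T(\varphi^*\nu)\smash_{\quot GH}T\nu_\varphi$; I would be slightly careful here that all bundles live over the orbit $\quot GH$, so the relevant Thom spaces are of the form $G_+\smash_H\setS^{(\,\cdot\,)}$ and the smash is the twisted smash over $H$ that was set up in Section~1.

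The duality statement then follows formally. For a $V$-manifold, the orbit $\quot GH$ is itself a compact $G$-manifold, and its Spanier--Whitehead dual is computed (as recalled after Theorem~\ref{thm:ordinaryproperties}, following \cite{may}, II.6) as $D({\quot GH}_+)\simeq G_+\smash_H\setS^{-L}$ with $L$ the tangent representation at $[e]$. More to the point, I would use the general principle that if $A$ is a finite $G$-CW spectrum and $A\smash B\simeq\setS^W$ for some representation $W$, then $B\simeq D(A)\smash\setS^W=\Sigma^W D(A)$, equivalently $D(B)\simeq\Sigma^{-W}A$. Applying this with $A=T(\varphi^*\nu)$ and $B=T\nu_\varphi$, the Pontryagin--Thom collapse above exhibits $T(\varphi^*\nu)\smash T\nu_\varphi\simeq\setS^W$, whence $D(T\nu_\varphi)\simeq\Sigma^{-W}T(\varphi^*\nu)$, which is exactly the claim. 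To justify that the collapse map is a stable equivalence and not merely some map, I would note that $T\nu_\varphi$, $T(\varphi^*\nu)$ and $\setS^W$ are all suspension spectra of finite $G$-CW complexes (Thom spaces of bundles over the orbit $\quot GH$), so one can either invoke equivariant Atiyah duality for the closed $G$-manifold $\quot GH$ embedded in $W$ with the given normal data, or verify the equivalence by restricting to each subgroup and checking on fixed points that the collapse induces the Atiyah duality equivalence non-equivariantly.

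\textbf{Main obstacle.}
The genuinely delicate point is the equivariant Atiyah-duality input itself: one must know that for the compact $G$-manifold $P=\quot GH$ with an embedding into $W$ and normal bundle $\varphi^*\nu\oplus\nu_\varphi$, the Pontryagin--Thom collapse $\setS^W\to T(\varphi^*\nu\oplus\nu_\varphi)$ together with the diagonal exhibits a stable duality between $P_+$ and $T\nu(P,W)$; equivalently that $T(\varphi^*\nu)$ and $T\nu_\varphi$ are Spanier--Whitehead dual up to the shift $\Sigma^{-W}$. For $P$ an orbit this is classical and clean, but I would still need to make sure the $G_x$-equivariant splitting of the normal bundle of $N$ in $W$ as $\varphi^*\nu\oplus\nu_\varphi$ is genuinely canonical up to the fixed isomorphisms with elements of $RO(G)$ set up earlier, so that the shift $-W$ is the literal representation $W$ and not merely something stably isomorphic to it. Once that bookkeeping is pinned down, the rest is a formal manipulation with Spanier--Whitehead duality of finite $G$-spectra.
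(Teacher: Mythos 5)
Your overall strategy (read the duality off from the geometry of the orbit $\quot GH$ embedded in $W$, with normal bundle $\varphi^*\nu\oplus\nu_\varphi$) is reasonable, but the formal step you actually rely on fails. The Pontryagin--Thom collapse lands in $T(\varphi^*\nu\oplus\nu_\varphi)$, which is the Thom space of the Whitney sum, i.e.\ the \emph{fibrewise} (twisted) smash over the orbit, $G_+\smash_H\setS^{(W-V)\oplus N}$ --- not the external smash $T(\varphi^*\nu)\smash T\nu_\varphi$. You flag this yourself, but then apply the principle ``$A\smash B\simeq\setS^W$ implies $D(B)\simeq\Sigma^{-W}A$'' to the external smash anyway, and that identification is simply false unless $\quot GH$ is a point: $T(\varphi^*\nu)\cong{\quot GH}_+\smash\setS^{W-V}$ is not an invertible spectrum (its homology is that of the orbit, suitably shifted), so no equivalence $T(\varphi^*\nu)\smash T\nu_\varphi\simeq\setS^W$ can exist, and the invertibility argument has nothing to bite on. Spanier--Whitehead duals of orbits are not inverses. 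To make your route work you would have to produce a genuine duality datum: the coevaluation $\setS^W\to T(\varphi^*\nu\oplus\nu_\varphi)\to T(\varphi^*\nu)\smash T\nu_\varphi$, where the second map is the Thom-diagonal separating the two bundle factors over the diagonal of the orbit, together with an evaluation map, and then verify the triangle identities --- this is exactly the equivariant Atiyah duality for Thom spaces of complementary bundles treated in \cite{may}, III.5, which is the ``main obstacle'' you correctly identify but do not discharge.

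The paper avoids all of this by a short algebraic computation that you could adopt instead: since $M$ is a $V$-manifold, $T\nu_\varphi\cong G_+\smash_H\setS^N$ with $V=L\oplus N$, $L=T_{[e]}\quot GH$, and $T(\varphi^*\nu)\cong G_+\smash_H\setS^{W-V}\cong{\quot GH}_+\smash\setS^{W-V}$ (untwisting, because $\setS^{W-V}$ carries a $G$-action). Then, using only $D({\quot GH}_+)\cong G_+\smash_H\setS^{-L}$ (\cite{may}, II.6) and $D(\Sigma^QX)\cong\Sigma^{-Q}D(X)$, one gets
\[
 D(T(\varphi^*\nu))\cong\Sigma^{V-W}D({\quot GH}_+)\cong\Sigma^{V-W}G_+\smash_H\setS^{-L}\cong\Sigma^{-W}G_+\smash_H\setS^{N}\cong\Sigma^{-W}T\nu_\varphi,
\]
which is the lemma after dualizing once more. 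So the geometric input you were trying to invoke is packaged entirely into the known dual of an orbit; no Atiyah-duality verification for the pair of complementary bundles is needed.
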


\begin{proof}
We have $T\nu_\varphi\cong G_+\smash_H\setS^N$, where $N$ is a complement to $L=T_{[e]}\quot GH$ in $V$. Furthermore, 
$T(\varphi^*\nu)\cong G_+\smash_H\setS^{W-V}\cong{\quot GH}_+\smash\setS^{W-V}$. Using the fact that $D(\Sigma^QX)=\Sigma^{-Q}D(X)$ for any $G$-representation $Q$ and again the 
calculation of the dual of an orbit $\quot GH$ to be $G_+\smash_H\setS^{-L}$, we see that
\begin{eqnarray*}
 D(T(\varphi^*\nu))&\cong& D({\quot GH}_+\smash\setS^{W-V})\\
                   &\cong&\Sigma^{V-W}D({\quot GH}_+)\\
                   &\cong&\Sigma^{V-W}G_+\smash_H\setS^{-L}\\
                   &\cong&\Sigma^{-W}G_+\smash_H\setS^N\\
                   &\cong&\Sigma^{-W}T\nu_\varphi.
\end{eqnarray*}
\end{proof}

Now we can establish in detail the promised connection between Thom classes and fundamental classes, which has been sketched in \cite{costenoble}. Here and in the following, we 
will assume that if $M$ is a manifold with boundary embedded in a $G$-representation $W$, then $W$ is of the form $W=W'\oplus\setR$ and $\partial M$ embeds into $W'\times\{0\}$,
$M\setminus\partial M$ embeds into $W'\times\inoo{0,\infty}$.

\begin{proposition}\label{prop:fundamentalthomclass}
Let $M$ be a compact $V$-manifold with (possibly empty) boundary and let $M\to W$ be an embedding into an orthogonal $G$-representation $W$. Let $\nu$ be the normal bundle of 
that embedding. The following sequence consists of isomorphisms
\begin{eqnarray*}
 H^{\alpha+W-V}_G(T\nu)&\cong&\{T\nu,\Sigma^{\alpha+W-V}\calH\}_G\\
                       &\cong&\{\Sigma^WD(\quot M{\partial M}),\Sigma^{\alpha+W-V}\calH\}_G\\
                       &\cong&\{\setS^{V-\alpha},\quot M{\partial M}\wedge\calH\}_G\\
                       &\cong&\calH_{V-\alpha}^G(\quot M{\partial M}).
\end{eqnarray*}
In addition, taking $\alpha=0$, a class $\tau\in H^{W-V}_G(T\nu)$ is a Thom class for $\nu$ if and only if its image $\calO\in\calH_V^G(\quot M{\partial M})$ is a fundamental 
class for $M$.
\end{proposition}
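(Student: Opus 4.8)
The plan is to verify the chain of isomorphisms displayed in the statement and then prove the compatibility assertion relating Thom classes and fundamental classes via a naturality/localization argument.

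For the chain of isomorphisms, I would proceed line by line. The first isomorphism is just the definition of cohomology with Burnside coefficients in terms of the representing spectrum $\calH = H\conj{A}_{\quot GG}$, together with the suspension isomorphism identifying $H^{\alpha+W-V}_G(T\nu)$ with a group of stable maps into $\Sigma^{\alpha+W-V}\calH$. The second isomorphism substitutes the Spanier-Whitehead dual identification $T\nu \cong \Sigma^W D(\quot M{\partial M})$; for a closed manifold this is Atiyah duality $D(M_+) \cong \Sigma^{-W} T\nu$, i.e. $T\nu \cong \Sigma^W D(M_+)$, and for the manifold-with-boundary case one uses the convention fixed just before the proposition (that $\partial M$ sits in $W'\times\{0\}$ and $M\setminus\partial M$ in $W'\times(0,\infty)$) to get the relative version $T\nu \cong \Sigma^W D(\quot M{\partial M})$. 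The third isomorphism is the adjunction $\{\Sigma^W D(X), \Sigma^{\alpha+W-V}\calH\}_G \cong \{\setS^0, X\wedge \Sigma^{\alpha - V}\calH\}_G$, which rearranges to $\{\setS^{V-\alpha}, X\wedge\calH\}_G$ using $DD X\simeq X$ for finite spectra and the smash-hom adjunction $\{DX\wedge A, B\}\cong\{A, X\wedge B\}$. The fourth isomorphism is again just the definition of dual ordinary homology $\calH^G_*$ in terms of stable maps out of spheres into $X\wedge\calH$, as recorded in Proposition~\ref{prop:eilenbergspectra} and the conventions preceding this section.

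For the second assertion, I would argue locally. Fix a point $x\in M\setminus\partial M$ with isotropy $H=G_x$, normal bundle $\nu_x$ of the orbit embedding $Gx\to M$, tangential representation $L_x$ of the orbit at $x$, and Pontryagin-Thom collapse $\psi:\quot M{\partial M}\to T(\nu_x)$. The key point is that the whole chain of isomorphisms in the proposition is natural in $M$ with respect to such collapse maps: the Atiyah duality equivalence $T\nu\cong\Sigma^W D(\quot M{\partial M})$ is compatible (up to the evident suspension shift) with the duality equivalence for the orbit $Gx$, because $\psi$ is dual to a bundle inclusion. Concretely, applying the chain to the orbit $Gx$ (viewed as its own $L_x$-manifold embedded in $W$, with normal bundle $\varphi^*\nu$ where $\varphi:\quot GH\to M$) and using the previous Lemma — which gives $D(T\nu_\varphi)\simeq\Sigma^{-W}T(\varphi^*\nu)$ — one gets a commuting square whose vertical maps are $\psi_*$ (on the homology side) and $\psi^*$ composed with the Thom-space restriction $H^{W-V}_G(T\nu)\to H^{W-V}_G(T\varphi^*\nu)$ (on the cohomology side). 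Chasing the Thom class $\tau$ around this square: restricting $\tau$ to $T(\varphi^*\nu)\cong G_+\wedge_H\setS^{W-V}$ and passing through the Wirthmüller isomorphism lands, by definition of a Thom class, in a generator of $A(H)\cong H^{W-V}_H(\setS^{W-V})$; on the other side, the image of $\calO$ under $\psi_*$ followed by the same identifications $\calH^G_V(T\nu_x)\cong\calH^G_V(G_+\wedge_H\setS^{V-L_x})\cong\calH^H_V(\setS^V)\cong A(H)$ is exactly the local condition defining a fundamental class. Since these two $A(H)$-valued invariants agree under the isomorphism of the proposition, $\tau$ is a Thom class precisely when $\calO$ is a fundamental class, and since $x$ was arbitrary this holds for all points at once.

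The main obstacle will be bookkeeping the suspension shifts and the precise form of the duality equivalences in the manifold-with-boundary case: one must be careful that $\Sigma^W D(\quot M{\partial M})$, not $\Sigma^W D(M_+)$, is the correct Thom space description, which is where the embedding convention $W=W'\oplus\setR$ with $\partial M\subseteq W'\times\{0\}$ is used, and one must check that the Pontryagin-Thom map for an interior point factors through $\quot M{\partial M}$ compatibly. The rest is a matter of assembling the duality isomorphism, the Wirthmüller isomorphism from Theorem~\ref{thm:ordinaryproperties}(iv), and the preceding Lemma into one commuting diagram and reading off the generator condition.
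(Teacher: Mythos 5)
Your proposal is correct and follows essentially the same route as the paper: the chain of isomorphisms is unpacked exactly as in the paper (definition of $\calH$, the duality $D(\quot M{\partial M})\cong\Sigma^{-W}T\nu$ from \cite{may} III.5.4, suspension, definition of dual homology), and the Thom-class/fundamental-class comparison is the same naturality diagram built from the preceding Lemma, the fact that the Pontryagin--Thom collapse is dual to the inclusion, and the Wirthm\"{u}ller identifications landing in $A(H)$, where the generator conditions are matched. No substantive difference from the paper's argument.
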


\begin{proof}
 The basic ingredient in the proof is the calculation of the Spanier-Whitehead dual of $\quot M{\partial M}$. This is calculated in \cite{may}, III.5.4, as $\Sigma^{-W}T\nu$. Then
 in our sequence, the first map is an isomorphism by definition of the spectrum $\calH$, the second map is Spanier-Whitehead duality. The third map is suspension and the last map 
 again is just definition of dual homology, so we indeed have an isomorphism and the image of a Thom class is a strong candidate for a fundamental class. So let 
 $\varphi:\quot GH\to M\setminus\partial M$ be a $G$-embedding, $\nu_\varphi$ be a normal bundle of the embedding and $\psi:\quot M{\partial M}\to T\nu_\varphi$ the 
 associated Pontryagin-Thom map. We consider the diagram
\[
 \xymatrix{
 H^{W-V}_G(T\nu)\ar[rr]^{(T\varphi^*)^*}\ar[d]^\cong                                  &&  H^{W-V}_G(T(\varphi^*\nu))\ar[d]^\cong\\
 \{\Sigma^VT\nu, \Sigma^W\calH\}_G\ar[rr]^{\circ\Sigma^V(T\varphi^*)}\ar[d]^\cong           &&  \{\Sigma^VT(\varphi^*\nu), \Sigma^W\calH\}_G\ar[d]^\cong\\
 \{\Sigma^V\Sigma^WD(\quot M{\partial M}), \Sigma^W\calH\}_G\ar[d]^\cong\ar[rr]^{\circ\Sigma^{V+W}D(\psi)}  &&  \{\Sigma^V\Sigma^WD(T\nu_\varphi), \Sigma^W\calH\}_G\ar[d]^\cong\\
 \{\Sigma^VD(\quot M{\partial M}),\calH\}_G\ar[rr]^{\circ\Sigma^VD(\psi)}\ar[d]^\cong                      &&  \{\Sigma^VD(T\nu_\varphi),\calH\}_G\ar[d]^\cong\\
 \{\setS^V, \quot M{\partial M}\wedge\calH\}_G\ar[rr]^{(\psi\wedge\id_\calH)\circ}\ar[d]^\cong                 &&  \{\setS^V, T\nu_\varphi\wedge\calH\}_G\ar[d]^\cong\\
 \calH_V^G(\quot M{\partial M})\ar[rr]^{\psi_*}                                              &&  \calH_V^G(T\nu_\varphi).
 }
\]
 Commutativity is seen as follows. The first square is just a definition, the second square is the natural duality identification of Thom spaces, together with the fact, see 
 \cite{may} III.5.5, that the Pontryagin-Thom map (more precisely its suspension) is dual to the inclusion. The third square is a suspension isomorphism, the fourth square is 
 Spanier-Whitehead duality and the last square again a definition. Since a Thom class in the upper left maps to a generator in the upper right, the same is true for the image of 
 the Thom class in the lower left, which therefore must be a fundamental class.
\end{proof}

Using the preceeding proposition together with the Thom isomorphism theorem and some more calculations of duals from \cite{may} III.5, we obtain the following Poincar\'{e}
duality theorem. It appeared in \cite{costenoble}, but without a proof, and is a special case of the more general discussion in \cite{may} III.6.

\begin{theorem}[\cite{costenoble}, Theorem 1.7.10]
 Let $M$ be a compact orientable $V$-manifold with possibly empty boundary. Then $M$ has a fundamental class, and the following pairings induce isomorphisms, the equivariant
 Poincar\'{e} duality isomorphisms, given by capping with the fundamental class of $M$.
 \begin{enumerate}[i)]
  \item With $d:\quot M{\partial M}\to M_+\smash\left(\quot M{\partial M}\right)$ induced by the diagonal of $M_+$,
  \[
   H^\alpha_G(M_+)\tensor\calH^G_V(\quot M{\partial M})\slra{\cap_d}\calH_{V-\alpha}^G(\quot M{\partial M}).
  \]

  \item With $d:\quot M{\partial M}\to\left(\quot M{\partial M}\right)\smash M_+$ induced by the diagonal of $M_+$,
  \[
   H^\alpha_G(\quot M{\partial M})\tensor\calH_V^G(\quot M{\partial M})\slra{\cap_d}\calH^G_{V-\alpha}(M_+).
  \]
 \end{enumerate}
 
\end{theorem}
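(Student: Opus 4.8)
The plan is to carry out, in the $RO(G)$-graded ordinary theory of \cite{costenoble}, the classical argument that Poincar\'{e} duality is the composite of a Thom isomorphism with Spanier--Whitehead duality (cf. \cite{may}, III.6); every ingredient is already at hand.

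\emph{Existence of a fundamental class.} Fix a $G$-embedding $M\to W$ into an orthogonal representation, observing the collar convention stated before Proposition \ref{prop:fundamentalthomclass} when $\partial M\neq\emptyset$; after first enlarging $W$ to $W\oplus V$ if necessary, the normal bundle $\nu$ of this embedding is a $V'$-vector bundle with $V'\oplus V\cong W$, so Theorem \ref{thm:thomiso} applies to it. Since $M$ is $G$-orientable and $TM\oplus\nu$ is $G$-trivial, $\nu$ is $G$-orientable as well (the $\calV_G$-valued pullback functor of a complement of $TM$ in a trivial bundle is path-class independent once those of $TM$ and of the trivial bundle are, since decompositions of representations into irreducibles are unique). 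Hence, by Theorem \ref{thm:thomiso}, $\nu$ carries a Thom class $\tau\in H^{W-V}_G(T\nu)$, and by Proposition \ref{prop:fundamentalthomclass} its image $\calO\in\calH^G_V(\quot M{\partial M})$ under the displayed isomorphism sequence is a fundamental class for $M$. We fix this $\calO$.

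\emph{Reduction to the Thom isomorphism.} For part (i) I would prove that, for every $\alpha$, the diagram
\[
 \xymatrix{
  H^\alpha_G(M_+)\ar[r]^-{-\,\cup\,\tau}\ar[d] & H^{\alpha+W-V}_G(T\nu)\ar[d]\\
  \calH^G_{V-\alpha}(\quot M{\partial M})\ar@{=}[r]& \calH^G_{V-\alpha}(\quot M{\partial M})
 }
\]
commutes, where the left vertical is $-\cap_d\calO$ and the right vertical is the isomorphism sequence of Proposition \ref{prop:fundamentalthomclass} for that $\alpha$. Since the top arrow (Thom isomorphism, Theorem \ref{thm:thomiso}) and the right arrow are isomorphisms, so is $-\cap_d\calO$, which is part (i); part (ii) follows by the variant below.

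\emph{Commutativity.} This is a diagram chase at the level of representing spectra. Both composites are assembled from the same pieces: the multiplication $\calH\smash\calH\to\calH$ of Proposition \ref{prop:eilenbergspectra} vii) (under $\conj{A}_{\quot GG}\nabla\jnoc{A}^{\quot GG}\cong\jnoc{A}^{\quot GG}$), smashes of representables, the Thom diagonal $T\nu\to M_+\smash T\nu$, the Pontryagin--Thom collapse, and the Spanier--Whitehead equivalence $D(\quot M{\partial M})\simeq\Sigma^{-W}T\nu$ of \cite{may}, III.5.4. The one geometric input is that, under this equivalence, the diagonal $d$ of $M$ corresponds, up to a single suspension by $W$, to the Thom diagonal of $\nu$: a tubular neighbourhood realizes $\nu$ as an open subset of $W$, the diagonal of that neighbourhood lands in $M_+\smash\nu$, and this is precisely the Thom diagonal. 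Combining this with the fact (\cite{may}, III.5.5) that the Pontryagin--Thom map is Spanier--Whitehead dual to an inclusion, and with the naturality of the cross products and pairings from \cite{costenoble} with respect to the maps occurring, one substitutes $\calO$ and $D(\quot M{\partial M})\simeq\Sigma^{-W}T\nu$ into the definition of $\cap_d$ and reads off the asserted composite. The bookkeeping is the only delicate point and the main obstacle: one must match the suspension shifts by $W$, $V$ and $W-V$, the twisted smash products $G_+\smash_H(-)$ occurring in the local generator conditions for Thom and fundamental classes, and --- when $\partial M\neq\emptyset$ --- the collar splitting of $W$; once these are aligned, commutativity is formal.

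\emph{Part (ii).} For closed $M$ this is part (i) with the two factors of the diagonal transposed. In general one runs the same argument with the roles of $M_+$ and $\quot M{\partial M}$ interchanged, using the companion identification $D(M_+)\simeq\Sigma^{-W}\bigl(T\nu/T(\nu|_{\partial M})\bigr)$ from \cite{may}, III.5 and the Thom isomorphism for the pair $(\nu,\nu|_{\partial M})$. Alternatively, one deduces (ii) from (i), the absolute Poincar\'{e} duality for the closed $V$-manifold $\partial M$ together with a collar, and the five lemma applied to the long exact (co)homology sequences of $(M,\partial M)$ and of $\partial M$, using that capping with $\calO$ commutes with the connecting homomorphisms and with restriction to $\partial M$.
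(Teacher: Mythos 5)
Your proposal follows essentially the same route as the paper: the fundamental class is obtained as the image of a Thom class of the normal bundle of an embedding $M\to W$ under the isomorphism of Proposition \ref{prop:fundamentalthomclass}, and duality is then proved by showing that, under that identification (with $D(\quot M{\partial M})\simeq\Sigma^{-W}T\nu$, respectively $D(M_+)\simeq\Sigma^{-W}\quot{T\nu}{T\nu'}$), capping with $\calO$ corresponds to cupping with $\tau$ along the (relative) Thom diagonal, so that Theorem \ref{thm:thomiso} finishes the argument. The extra details you supply --- $G$-orientability of the normal bundle and the identification of the diagonal with the Thom diagonal under Spanier--Whitehead duality --- are points the paper leaves implicit, and they are correct.
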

\begin{proof}
 The existence of a fundamental class follows immediately from Theorem \ref{thm:thomiso} and Proposition \ref{prop:fundamentalthomclass}.

 For Poincar\'{e} duality, we embed $M$ into a $G$-representation $W$ with the same convention as before Proposition \ref{prop:fundamentalthomclass} if $M$ has non-empty boundary.
 In particular, in that case $W=W'\oplus\setR$ and $\partial M$ embeds into $W'\times\{0\}$.
 
 Let $T\nu$ be the Thom space of the normal bundle of the embedding $M\to W$, $T\nu'$ be the Thom space of the normal bundle of the embedding $\partial M\to W'\times\{0\}$. 

 To establish i), we use that the dual of $\quot M{\partial M}$ is $\Sigma^{-W}T\nu$ (Theorem III.5.4 of \cite{may}). Then we have the commutative diagram
 \[
  \xymatrix{
           \{M_+,\setS^\alpha\smash\calH\}_G\tensor\{\setS^V,\quot M{\partial M}\smash\calH\}_G\ar[r]^{\quad\quad\quad\quad\cap_d}\ar[d]^\cong&\{\setS^{V-\alpha},\quot M{\partial M}\smash\calH\}_G\\
           \{M_+,\setS^\alpha\smash\calH\}_G\tensor\{T\nu, \setS^{W-V}\smash\calH\}_G\ar[r]^{\quad\quad\quad\quad\cup}&\{T\nu,\setS^{\alpha+W-V}\smash\calH\}_G\ar[u]_\cong\\
           }
 \]
 where the vertical arrows are given by Proposition \ref{prop:fundamentalthomclass} and the cup product at the bottom is with respect to the Thom diagonal $T\nu\to M_+\smash T\nu$.

 Since again by Proposition \ref{prop:fundamentalthomclass} a fundamental class in $\calH_V^G(\quot M{\partial M})$ maps to a Thom class in $H^{W-V}_G(T\nu)$, the claim
 follows from the Thom isomorphism theorem \ref{thm:thomiso}.

 To establish ii), we use that the dual of $M_+$ is $\Sigma ^{-W}\quot{T\nu}{T\nu'}$ (again from Theorem III.5.4 of \cite{may}). We have the commutative diagram
 \[
  \xymatrix{
           \{\quot M{\partial M},\setS^\alpha\smash\calH\}_G\tensor\{\setS^V,\quot M{\partial M}\smash\calH\}_G\ar[r]^{\quad\quad\quad\quad\cap_d}\ar[d]^\cong&\{\setS^{V-\alpha},M_+\smash\calH\}_G\\
           \{\quot M{\partial M},\setS^\alpha\smash\calH\}_G\tensor\{T\nu, \setS^{W-V}\smash\calH\}_G\ar[r]^{\quad\quad\quad\quad\cup}&\{\quot{T\nu}{T\nu'},\setS^{\alpha+W-V}\smash\calH\}_G\ar[u]_\cong\\
           }
 \]
 where the vertical maps are again given by Proposition \ref{prop:fundamentalthomclass} (or the obvious modification thereof for the map on the right), and the cup product is 
 induced by the Thom diagonal
 \[
  \quot{T\nu}{T\nu'}\to\quot M{\partial M}\smash T\nu.
 \]
 Again, the claim follows from the Thom isomorphism theorem.
\end{proof}

Using these isomorphisms, we can define the intersection product to be the map dual to the cup product. We only give an explicit definition in the case of the cup product on
cohomology, for $M$ an orientable compact $V$-manifold, thus getting an intersection product on dual homology. The other cases are treated similarly, but we will not need them in what is to follow.

\begin{definition}
 Let $M$ be an orientable compact $V$-manifold. Let $P:H^*_G(M_+)\to\calH_{V-*}^G(M_+)$ be the Poincar\'{e} duality isomorphism. The intersection product on $M$ is defined as
\[
 \bullet:\calH_\alpha^G(M_+)\tensor\calH_\beta^G(M_+)\to\calH_{\alpha+\beta-V}^G(M_+),\;x\tensor y\mapsto P(P^{-1}(y)\cup P^{-1}(x)).
\]
\end{definition}

In the following discussion, we assume that $M$ is a compact $V$-manifold without boundary. 

In the non-equivariant case it follows easily from the Kuenneth theorem that the homology cross product of two fundamental classes is a fundamental class for the product manifold.
The situation is more difficult in the equivariant case and we take a more direct approach to prove this fact. The essential ingredient is to show that the cohomology cross 
product of two Thom classes is a Thom class for the product bundle.

\begin{proposition}\label{prop:thomclassproduct}
 Let $p:E\to B$ be an orientable $V$-vector bundle, $q:E'\to B'$ be an orientable $W$-vector bundle, with Thom classes $\tau\in H^V_G(T(p))$, $\tau'\in H^W_G(T(q))$, respectively.
 Then the product bundle $p\times q:E\times E'\to B\times B'$ is an orientable $V\oplus W$-bundle and a Thom class is given by the image of $\tau\tensor\tau'$ under 
 \[
  H^V_G(T(p))\tensor H^W_G(T(q))\slra{\times}H^{V+W}_G(T(p)\wedge T(q))\cong H^{V+W}_G(T(p\times q)).
 \]
\end{proposition}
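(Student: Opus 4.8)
The plan is to check directly that $\tau\times\tau'$ meets the defining condition of a Thom class for $p\times q$; the orientability of $p\times q$ then follows at once from the Thom isomorphism theorem \ref{thm:thomiso}, so no separate argument for it is needed. Two preliminaries are routine. First, the fibre of $p\times q$ over $(x,y)$ is $p^{-1}(x)\times q^{-1}(y)$, the isotropy group of $(x,y)$ is $G_x\cap G_y$, and $p^{-1}(x)$, $q^{-1}(y)$ are $G_x$-, $G_y$-isomorphic to $V$, $W$; restricting to $G_x\cap G_y$ shows the fibre is $(V\oplus W)|_{G_x\cap G_y}$, so $p\times q$ is a $V\oplus W$-vector bundle. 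Second, the Thom space of an external Whitney sum is a smash product, $T(p\times q)\cong T(p)\wedge T(q)$, so the cross product $\tau\times\tau'$ does define a class in $H^{V+W}_G(T(p\times q))$.

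Next I fix a $G$-map $\varphi=(\varphi_1,\varphi_2)\colon\quot GH\to B\times B'$ and factor it as $\varphi=(\varphi_1\times\varphi_2)\circ\Delta$, with $\Delta\colon\quot GH\to\quot GH\times\quot GH$ the diagonal. Then $\varphi^*(p\times q)\cong\Delta^*(\varphi_1^*p\times\varphi_2^*q)$, and the map of Thom spaces inducing the restriction $H^{V+W}_G(T(p\times q))\to H^{V+W}_G(T(\varphi^*(p\times q)))$ factors as $T(\varphi^*(p\times q))\slra{\delta}T(\varphi_1^*p)\wedge T(\varphi_2^*q)\slra{T\varphi_1\wedge T\varphi_2}T(p)\wedge T(q)$, using $T(\varphi_1^*p\times\varphi_2^*q)\cong T(\varphi_1^*p)\wedge T(\varphi_2^*q)$. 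By naturality of the cross product, $(T\varphi_1\wedge T\varphi_2)^*(\tau\times\tau')=(T\varphi_1)^*\tau\times(T\varphi_2)^*\tau'$, and since $\tau,\tau'$ are Thom classes the factors $(T\varphi_1)^*\tau\in H^V_G(T(\varphi_1^*p))$ and $(T\varphi_2)^*\tau'\in H^W_G(T(\varphi_2^*q))$ become units in $A(H)$ under the standard identifications $T(\varphi_1^*p)\cong G_+\wedge_H\setS^V$, $H^V_G(G_+\wedge_H\setS^V)\cong H^V_H(\setS^V)\cong A(H)$ and their analogues for $W$. Thus the restriction of $\tau\times\tau'$ is $\delta^*$ applied to the cross product of two units.

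It then remains to evaluate $\delta^*$ composed with the cross product. Over the orbit $\quot GH$ the map $\delta$ is the canonical comparison $G_+\wedge_H\setS^{V+W}\to(G_+\wedge_H\setS^V)\wedge(G_+\wedge_H\setS^W)$, $[g,v\wedge w]\mapsto[g,v]\wedge[g,w]$, i.e.\ the diagonal map $G_+\wedge_H(A\wedge B)\to(G_+\wedge_HA)\wedge(G_+\wedge_HB)$ for $A=\setS^V$, $B=\setS^W$. Because the cross product commutes with restriction to subgroups -- hence with the Wirthm\"{u}ller isomorphisms -- and with the suspension isomorphisms, the composite $\delta^*\circ\times$ corresponds, via the identifications above, to the $H$-equivariant cross product $H^0_H(\setS^0)\tensor H^0_H(\setS^0)\to H^0_H(\setS^0)$; and under $H^0_H(\setS^0)\cong A(H)$ together with $\conj{A}_{\quot HH}\boxx\conj{A}_{\quot HH}\cong\conj{A}_{\quot HH}$ this is just multiplication in the Burnside ring $A(H)$. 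A product of units is a unit, so $\tau\times\tau'$ restricts to a generator over every orbit and is therefore a Thom class; $p\times q$ is then orientable by Theorem \ref{thm:thomiso}.

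The step I expect to be the main obstacle is the last one: making precise that the external cross product over $B\times B'$, post-composed with the diagonal comparison $\delta$ and transported through the Wirthm\"{u}ller isomorphisms, genuinely computes the Burnside-ring product of the local Thom values. This is the multiplicativity of the Wirthm\"{u}ller isomorphism, which one has to extract from the stated naturality of the products under restriction to subgroups and fixed sets together with the unit property $\conj{A}_{\quot GG}\boxx\conj{T}\cong\conj{T}$ of the box product; everything else is bookkeeping with the standard identifications of Thom spaces of bundles over orbits.
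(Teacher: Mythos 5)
Your proposal is correct and follows essentially the same route as the paper: restrict along an arbitrary $\varphi=(\varphi_1,\varphi_2):\quot GH\to B\times B'$ factored through the diagonal, use naturality of the cross product to reduce to the pulled-back Thom classes over the orbit, and identify the resulting pairing (via the Wirthm\"{u}ller and suspension identifications) with multiplication in $A(H)$, so that a product of units is a unit; orientability then comes from Theorem \ref{thm:thomiso}. The step you flag as the main obstacle is exactly the one the paper also leans on, namely that the product over the orbit computes the Burnside ring multiplication, which the paper justifies by the definition of the product and the fact that the $A(H)$-product is induced by smash product of representatives.
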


\begin{proof}
We show that the stated class is a Thom class for $p\times q$. So let $\varphi=(\varphi_1,\varphi_2):\quot GH\to B\times B'$ be a $G$-map. We build a commutative diagram as 
follows. The diagram
\begin{footnotesize}
\[
 \xymatrix{
  H^V_G(T(p))\tensor H^W_G(T(q))\ar[r]^\times\ar[d] & H^{V+W}_G(T(p)\wedge T(q))\ar[r]^{\quad\quad\quad\cong}\ar[d] & H^{V+W}_G(T(p\times q))\ar[d]\\
  H^V_G(T(\varphi_1^*p))\tensor H^W_G(T(\varphi_2^*q))\ar[r]^\times & H^{V+W}_G(T(\varphi_1^*p)\wedge T(\varphi_2^*q))\ar[r]^{\quad\quad\cong} & H^{V+W}_G(T(\varphi_1^*p\times\varphi_2^*q))\\
  }
\]
\end{footnotesize}
commutes, the left square due to naturality of the cross product, the right square due to naturality of the multiplicativity of Thom spaces. The horizontal maps are actually cup
products with respect to the isomorphisms $T(p)\smash T(q)\cong T(p\times q)$ and $T(\varphi_1^*p))\smash T(\varphi_2^*q)\cong T(\varphi_1^*p\times\varphi_2^*q)$.
So we have
\[
 \xymatrix{
  H^V_G(T(p))\tensor H^W_G(T(q))\ar[r]^\cup\ar[d]&H^{V+W}_G(T(p\times q))\ar[d]\\
  H^V_G(T(\varphi_1^*p))\tensor H^W_G(T(\varphi_2^*q))\ar[r]^\cup & H^{V+W}_G(T(\varphi_1^*p\times\varphi_2^*q))
  }
\]
The same reasoning gives a commutative diagram
\[
 \xymatrix{
  H^V_G(T(\varphi_1^*p))\tensor H^W_G(T(\varphi_2^*q))\ar[r]^\cup\ar[d]^\cong & H^{V+W}_G(T(\varphi_1^*p\times\varphi_2^*q))\ar[d]^\cong\\
  H^V_G(\Sigma^V{\quot GH}_+)\tensor H^W_G(\Sigma^W{\quot GH}_+)\ar[r]^\cup & H^{V+W}_G(\Sigma^{V+W}{\quot GH}_+\smash{\quot GH}_+)
          }
\]
We can paste these two diagrams together and obtain
\[
 \xymatrix{
  H^V_G(T(p))\tensor H^W_G(T(q))\ar[r]^\cup\ar[d]&H^{V+W}_G(T(p\times q))\ar[d]\\
  H^V_G(T(\varphi_1^*p))\tensor H^W_G(T(\varphi_2^*q))\ar[r]^\cup\ar[d]^\cong & H^{V+W}_G(T(\varphi_1^*p\times\varphi_2^*q))\ar[d]^\cong\\
  H^V_G(\Sigma^V{\quot GH}_+)\tensor H^W_G(\Sigma^W{\quot GH}_+)\ar[r]^\cup\ar[d]^\cong & H^{V+W}_G(\Sigma^{V+W}{\quot GH}_+\smash{\quot GH}_+)\ar[d]^\cong\\
  H^0_G({\quot GH}_+)\tensor H^0_G({\quot GH}_+)\ar[r]^\times\ar[dd]^\cong & H^0_G({\quot GH}_+\smash{\quot GH}_+)\ar[d]^{\Delta^*}\\
  & H^0_G({\quot GH}_+)\ar[d]^\cong\\
  A(H)\tensor A(H)\ar[r]^\mu&A(H).
 }
\]
We have established commutativity of the top two squares. The third square commutes due to naturality of the product with respect to suspension, the last square is a restatement 
of the fact that the cup product generalizes the ring multiplication in $A(H)$. This follows directly from the definition of the product and the fact that the Burnside ring 
product is induced by smash product of representants. 

If we start with the product of two Thom classes in the upper left, these are mapped to a tensor product of units in the lower left by definition of the Thom class. So they are mapped
to a unit in the lower right.

But the map down the right column is just the composition 
\begin{eqnarray*}
H^{V+W}_G(T(p\times q))&\to& H^{V+W}_G(T(\varphi^*(p\times q))\\
                       &\cong& H^{V+W}_G(\Sigma^{V+W}{\quot GH}_+)\\
                       &\cong& H^0_G({\quot GH}_+)\\
                       &\cong& A(H),
\end{eqnarray*}
since $\varphi_1\times\varphi_2\circ\Delta=(\varphi_1,\varphi_2)=\varphi$. Thus, the potential Thom class maps to a generator under restriction via $\varphi$, which characterizes it as a 
Thom class.
\end{proof}

Our intended result for fundamental classes follows immediately.

\begin{corollary}
 Let $M$ be an orientable $V$-manifold. Then $M\times M$ is an orientable $V\oplus V$-manifold and the fundamental class of $M\times M$ can be chosen such that in the 
 commutative diagram
\[
 \xymatrix{
          H^{W-V}_G(T\nu)\tensor H^{W-V}_G(T\nu)\ar[r]^\times\ar[d]^\cong& H^{2W-2V}_G(T(\nu\times\nu))\ar[d]^\cong\\
          H^0_G(M_+)\tensor H^0_G(M_+)\ar[r]^\times\ar[d]^{P\tensor P}&H^0_G((M\times M)_+)\ar[d]^{P_\times}\\
          \calH^G_V(M_+)\tensor\calH^G_V(M_+)&\calH^G_{2V}((M\times M)_+),
          }
\]
the element $\calO\tensor\calO$ in the lower left maps to $\tau\tensor\tau$ in the upper left, where $\tau$ is a Thom class for the normal bundle $\nu$ of the embedding $M\to W$, 
$\tau\tensor\tau$ maps to a Thom class of the normal bundle of the embedding $M\times M\to W\times W$, and this Thom class maps to the fundamental class of $M\times M$ in the
lower right.
\end{corollary}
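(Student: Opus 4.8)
The plan is to assemble the results already established in this section; no genuinely new ingredient is needed. First I observe that $M\times M$ is an orientable $V\oplus V$-manifold: its tangent bundle is $TM\times TM$, and since $M$ is orientable the bundle $TM$ has a Thom class by Theorem \ref{thm:thomiso}, so Proposition \ref{prop:thomclassproduct}, applied with $p=q=TM$, shows that $TM\times TM$ is an orientable $V\oplus V$-vector bundle over $M\times M$. Next, fixing an embedding $M\hookrightarrow W$ with normal bundle $\nu$ and a Thom class $\tau\in H^{W-V}_G(T\nu)$, the embedding $M\times M\hookrightarrow W\times W$ has normal bundle $\nu\times\nu$; under the canonical identification $T\nu\wedge T\nu\cong T(\nu\times\nu)$, Proposition \ref{prop:thomclassproduct} tells us that the cohomology cross product $\tau\times\tau$ is a Thom class for $\nu\times\nu$.

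This produces the desired fundamental class. Applying Proposition \ref{prop:fundamentalthomclass} to $M\times M\hookrightarrow W\times W$ (with ambient representation $W\oplus W$ and boundaryless manifold $M\times M$, so that $2W-2V$ and $2V$ play the roles of $W-V$ and $V$), the duality isomorphism there carries the Thom class $\tau\times\tau$ to a class $\calO_{M\times M}\in\calH^G_{2V}((M\times M)_+)$, and by the same proposition $\calO_{M\times M}$ is a fundamental class for $M\times M$. We take this as our choice of fundamental class; with it, $P_\times$ is the equivariant Poincar\'{e} duality isomorphism for $M\times M$.

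It remains to chase $\tau\otimes\tau$ around the diagram. The left-hand vertical isomorphisms are, by the proof of the Poincar\'{e} duality theorem above, the composite of the Thom isomorphism $H^{W-V}_G(T\nu)\xrightarrow{\cong}H^0_G(M_+)$ (cupping with $\tau$, hence sending $\tau\mapsto 1$) with $P$ (capping with $\calO$, hence sending $1\mapsto\calO$); thus $\calO\otimes\calO$ in the lower left indeed corresponds to $\tau\otimes\tau$ in the upper left. The top square commutes by naturality of the cohomology cross product with respect to the Thom isomorphisms, which is exactly Proposition \ref{prop:thomclassproduct} read as the statement that the inverse Thom isomorphism for $\nu\times\nu$ sends $\tau\times\tau\mapsto 1\in H^0_G((M\times M)_+)$. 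Finally, the right-hand column is, again by the proof of the Poincar\'{e} duality theorem applied to $M\times M$ with Thom class $\tau\times\tau$, the composite of the Thom isomorphism with the duality isomorphism of Proposition \ref{prop:fundamentalthomclass}; hence it sends that Thom class (equivalently $1\in H^0_G((M\times M)_+)$) to $P_\times(1)=\calO_{M\times M}$, the fundamental class in the lower right. This yields the three claimed assertions.

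The only real work is the bookkeeping: one must check that the homeomorphism $T(p)\wedge T(q)\cong T(p\times q)$, the Thom isomorphisms, and the Spanier--Whitehead duality identifications of Proposition \ref{prop:fundamentalthomclass} are mutually compatible with the cross product, so that ``the Thom class of a product bundle is the cross product of the Thom classes'' translates cleanly into ``the fundamental class of $M\times M$ is the image of $\tau\times\tau$''. The main obstacle is an orientation- or sign-type discrepancy possibly hidden in one of these canonical identifications; this is precisely why the statement only asserts that the fundamental class \emph{can be chosen} to make the diagram commute, rather than identifying it on the nose with a prescribed dual-homology cross product $\calO\times\calO$.
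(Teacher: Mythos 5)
Your argument is correct and is essentially the paper's own proof: the paper likewise defines the fundamental class of $M\times M$ as the image of the Thom class $\tau\times\tau$ (a Thom class by Proposition \ref{prop:thomclassproduct}) under the duality isomorphism of Proposition \ref{prop:fundamentalthomclass}, so that commutativity of the diagram holds by construction. You simply spell out the orientability of $M\times M$ and the identification of the vertical composites with the Thom and duality isomorphisms, which the paper leaves implicit.
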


\begin{proof}
 We just define the fundamental class of $M\times M$ via the given mapping property. Proposition 4.5 shows that this indeed gives a fundamental class.
\end{proof}

This characterization of fundamental classes in a product manifold allows us to define a Lefschetz number in the case of compact $V$-manifolds. The general case is then a 
standard modification.

\begin{definition}
 Let $M$ be a compact $V$-manifold and $f:M\to M$ a $G$-map. Let $\calO_\Delta\in\calH_V^G((M\times M)_+)$ be the fundamental class of the diagonal submanifold of 
 $M\times M$ in $M\times M$, and let $\calO_\Gamma\in\calH_V^G((M\times M)_+)$ be the fundamental class of the graph manifold $\Gamma=\{(x, f(x))\;|\;x\in M\}\subseteq M\times M$. The 
 equivariant Lefschetz number of $f$ is defined to be the intersection product of $\calO_\Delta$ and $\calO_\Gamma$, followed by the evaluation map 
 $\eps:\calH_0^G((M\times M)_+)\to\calH_0^G(\setS^0)\cong A(G)$:
  \[
   L_G(f)=\eps(\calO_\Delta\bullet\calO_\Gamma).
  \] 
\end{definition}

To generalize this definition to a more general type of manifold, we first rephrase it a bit. Consider the diagram
\[
 \xymatrix{
          M_+\ar[rr]^\Delta\ar[d]^{(\id,f)}&&(M\times M)_+\ar[d]^{((\id,f), (\id,f))}\\
          (M\times M)_+\ar[rr]^{\Delta_\times}&&(M\times M\times M\times M)_+
          },
\]
where $\Delta_\times$ is the diagonal embedding of the product $M\times M$. Using naturality of the cap product, we can conclude that for classes 
$x\in H^\alpha_G((M\times M)_+)$ and $y\in\calH^G_\beta(M_+)$, we have
\[
 \Delta_*((\id,f)^*(x)\cap y)=x\cap(\id,f)_*(y).
\]
We apply this to the intersection product $\calO_\Delta\bullet\calO_\Gamma$, and use the fact that the dual of $\calO_\Delta$ is given by the element 
$\tau^{M\times M}_\Delta=\psi^*(\tau)$, where $\tau$ is the Thom class of the embedding $\Delta\subseteq M\times M$ and $\psi:(M\times M)_+\to T\nu^{M\times M}_\Delta$ is the 
associated Pontryagin-Thom map. Similarly, the dual of $\calO_\Gamma$ is an element $\tau^{M\times M}_\Gamma$, associated to the graph embedding $\Gamma\to M\times M$. This 
duality statement will be proven in Lemma \ref{lem:fundamentalthomduality}. We obtain
\begin{eqnarray*}
 \calO_\Delta\bullet\calO_\Gamma&=&(\tau^{M\times M}_\Delta\cup\tau^{M\times M}_\Gamma)\cap\calO_{M\times M}\\
                                &=&\tau^{M\times M}_\Delta\cap(\tau^{M\times M}_\Gamma\cap\calO_{M\times M})\\
                                &=&\tau^{M\times M}_\Delta\cap(\id,f)_*(\calO_M)\\
                                &=&(\id,f)_*((\id,f)^*(\tau^{M\times M}_\Delta)\cap\calO_M)).
\end{eqnarray*}
Since the diagram
\[
 \xymatrix{
         M\ar[r]^{(\id,f)}\ar[rd]&M\times M\ar[d]\\
           & {*}
          }
\]
commutes, it follows for the equivariant Lefschetz number that
\[
 L_G(f)=\eps((\id,f)^*(\tau^{M\times M}_\Delta)\cap\calO_M)).
\]
Writing this in diagrammatic form, we have that the equivariant Lefschetz number equals the image of the Thom class $\tau$ of the diagonal embedding of $M$ into $M\times M$ under the 
sequence of maps
\[
 H^V_G(T\nu^{M\times M}_\Delta)\slra{\psi^*}H^V_G((M\times M)_+)\slra{(\id,f)^*}H^V_G(M_+)\slra{\cap\calO_M}\calH_0^G(M_+)\slra{\eps}A(G),
\]
where the first map is induced by the Pontryagin-Thom map. 

 Now if $M$ is a non-compact orientable $V$-manifold and $f$ has compact fixed point set, let $N$ be an invariant open neighbourhood of $Fix(f)$, such that $\conj{N}$ is an 
 invariant manifold with boundary. As an open subset of an orientable $V$-manifold, $N$ is an orientable $V$-manifold. Let $T\nu^{M\times M}_\Delta$ be the Thom space of the 
 diagonal embedding of $M$ into $M\times M$. We can realize this Thom space as the compactification of an invariant neighbourhood of $\Delta$ such that the image of the boundary 
 of $N$ under $(\id, f)$ does not lie in that neighbourhood. Then $(\id, f)$ induces a map $(\id, f):\quot{\conj{N}}{\partial N}\to T\nu^{M\times M}_\Delta$, and we define the 
 equivariant Lefschetz number of $f$ to be the image of the Thom class under
 \[
  H^V_G(T\nu^{M\times M}_\Delta)\slra{(\id,f)^*} H^V_G(\quot{\conj{N}}{\partial N})\slra{\cong}\calH_0^G(N_+)\slra{\eps}A(G)
 \]
 In the general case where $M$ is not necessarily orientable or a $V$-manifold, we embed $M$ into a $G$-representation $V$ with invariant tubular neighbourhood $U$. As an open 
 subset of $V$, $U$ is a non-compact orientable $V$-manifold. We define a map $f_0:U\to U$ by $f_0=i\circ f\circ r$, where $r:U\to M$ is the tubular retraction, $i:M\to U$ the 
 embedding. If the fixed point set of $f_0$ (equal to that of $f$) is compact, we define
 \[
  L_G(f)=L_G(f_0),
 \]
 the right hand term having been defined previously. Summarizing, we have defined an equivariant Lefschetz number for $G$-manifolds $M$ with finite orbit type (those are 
 embeddable into finite dimensional $G$-representations, see \cite{tomdieck}), and $G$-maps $f:M\to M$ with compact fixed point set. 

 We should address the question of independence of the equivariant Lefschetz number of all the choices made in the definition. This is straightforward and does not differ from the 
 non-equivariant situation, which can be found in \cite{nussbaum}, so we omit a rigorous proof. The general idea is that, given two embeddings of $M$ into $V$ and $W$, $M$ embeds
 diagonally into $V\oplus W$, and the Lefschetz numbers defined via the embedding into $V$ and into $W$ both equal the Lefschetz number defined via the diagonal embedding.

\section{Restrictions and submanifolds}
In order to derive properties of the equivariant Lefschetz number, we need to know what happens when we restrict to fixed points or subgroups. In the following, we again restrict
ourselves to the case where $M$ is a compact $V$-manifold.

Thom classes and fundamental classes behave well under restriction to subgroups and fixed sets. The following result was proven for Thom classes in \cite{costenoble}, the proofs
are very similar. With the obvious adaptions, it is also true for manifolds with boundary. 

\begin{proposition}\label{prop:fundamentalrestriction}
 Let $M$ be a $V$-manifold. Then the following are equivalent for a class $\calO\in\calH_V^G(M_+)$:
 \begin{enumerate}[i)]
  \item $\calO$ is a fundamental class for $M$.

  \item $\calO\big|H$ is a fundamental class for $M$ as an $H$-manifold for every subgroup $H$ of $G$.

  \item $\calO^H$ is a fundamental class for $M^H$ as a $W(H)$-manifold for every subgroup $H$ of $G$.

  \item $\calO^H\big|e$ is a fundamental class for $M^H$ as an $e$-manifold for every subgroup $H$ of $G$.
 \end{enumerate}
\end{proposition}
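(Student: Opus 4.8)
The plan is to prove the chain of equivalences $i)\Rightarrow ii)\Rightarrow iii)\Rightarrow iv)$ together with $iv)\Rightarrow i)$, reducing everything to the analogous restriction statements for Thom classes, which are already available from \cite{costenoble} via Proposition \ref{prop:fundamentalthomclass}. The key observation is that ``$\calO$ is a fundamental class'' is, by definition, a local condition: it asserts that the image of $\calO$ in $\calH_V^H(\setS^V)\cong A(H)$ is a generator, for the local model around every orbit $Gx$, obtained by composing $\psi_*$ with the Wirthm\"uller isomorphism. Since being a generator of a ring is detected after restriction to any subgroup (because the restriction $A(H)\to A(K)$ sends $1$ to $1$, equivalently $\conj{A}_{\quot HH}|K\cong\conj{A}_{\quot KK}$), and since the fixed-point map $A(H)\to A(W(H))$ likewise sends $1$ to $1$ (equivalently $\conj{A}_{\quot HH}^K\cong\conj{A}_{\quot{W(K)}{W(K)}}$ as recorded in Section 1), the defining condition propagates.

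Concretely, first I would fix an embedding $M\hookrightarrow W$ into a $G$-representation with normal bundle $\nu$, and invoke Proposition \ref{prop:fundamentalthomclass} to identify $\calO$ with its preimage Thom class $\tau\in H^{W-V}_G(T\nu)$. The point is that the isomorphism $\calH_V^G(\quot M{\partial M})\cong H^{W-V}_G(T\nu)$ of that proposition is natural with respect to both restriction to subgroups $H$ (using that the $H$-embedding is obtained by restricting the $G$-embedding, so the normal bundle restricts correctly) and with respect to the fixed-point functor $\Phi^H$ / $(-)^H$ (using that $(T\nu)^H = T(\nu^H)$ is the Thom space of the normal bundle of $M^H\hookrightarrow W^H$, and that Spanier-Whitehead duality commutes with geometric fixed points as in Theorem \ref{thm:ordinaryproperties}(v)). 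So the equivalences for $\calO$ follow from the corresponding equivalences for $\tau$, which is where I would cite the restriction statement for Thom classes from \cite{costenoble}.

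For the Thom-class restriction statements, the core computation is: restricting a Thom class $\tau\in H^V_G(T(p))$ along a subgroup inclusion $H\subseteq G$ gives a Thom class for $p|H$ (because the defining "generator in $A(K)$" condition is checked on all orbits $G/K$, and the orbits of $H$ are a subfamily after restriction, with the relevant local-model isomorphism $H^V_H(\setS^V)\cong A(K)$ compatible), and taking geometric fixed points sends $\tau$ to a Thom class for the $V^H$-bundle $p^H$ over $B^H$ (using Proposition \ref{prop:eilenbergspectra}(iv),(v) to identify $(H\conj{A}_{\quot GG})^H$ with $H\conj{A}_{\quot{W(H)}{W(H)}}$, hence $H^V_G(T(p))^H$-classes restrict to $H^{V^H}_{W(H)}(T(p^H))$-classes, and the key fact $\conj{A}_{\quot GG}^H\cong\conj{A}_{\quot{W(H)}{W(H)}}$ carries generators to generators). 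The implication $iv)\Rightarrow i)$ then uses that a class in $H^{W-V}_G(T\nu)$ restricting to a Thom class on each $(T\nu)^H$ over $e$ is itself a Thom class — this is precisely the Thom-class version of the same proposition, since the generator condition for $\tau$ at an orbit $G/H$ is, after applying the Wirthm\"uller and fixed-point isomorphisms, exactly the generator condition for $\tau^H|e$ at the corresponding point.

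The main obstacle I expect is bookkeeping the dimension shifts and the various natural isomorphisms carefully: restriction to subgroups on \emph{dual} homology introduces the shift by $L$ (the tangential representation of $\quot GK$) as displayed after Theorem \ref{thm:ordinaryproperties}, and one must check these shifts are exactly absorbed by the corresponding shifts in the Thom-space picture (the dual of $\quot GH_+$ being $G_+\smash_H\setS^{-L}$, and $T\nu_\varphi\cong G_+\smash_H\setS^N$ with $N\oplus L\cong V$) so that "fundamental class of $M$ as $H$-manifold" really does mean a generator in the \emph{same} $A(H)$ that $\calO|H$ lands in. Everything else is a diagram chase verifying that Proposition \ref{prop:fundamentalthomclass}'s isomorphism is natural for the three operations (subgroup restriction, fixed points, the composite $\calO^H|e$); once that naturality square is in hand, the proposition is a formal consequence of the Thom-class case, and I would state it as such rather than reproving the Thom-class restriction from scratch.
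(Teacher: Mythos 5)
Your proposal is correct in substance, but it reorganizes the proof rather than following the paper's. You deduce the whole proposition from the Thom-class analogue in \cite{costenoble}, by arguing that the correspondence of Proposition \ref{prop:fundamentalthomclass} between fundamental classes in $\calH_V^G(\quot M{\partial M})$ and Thom classes in $H^{W-V}_G(T\nu)$ is natural for restriction to subgroups and for passage to fixed points (via $(T\nu)^H=T(\nu^H)$ and the monoidality of geometric fixed points, so that Spanier--Whitehead duality for $M\subseteq W$ passes to that for $M^H\subseteq W^H$). The paper instead re-proves the statement for fundamental classes directly: for i)$\Rightarrow$ii) it does essentially what you propose, dualizing along the embedding $M\to W$ and checking the resulting diagram of Thom-space cohomologies, where commutativity is obvious since $T\Phi=G_+\smash_HT\varphi$; but for ii)$\Rightarrow$iii) it stays in dual homology, using the Pontryagin--Thom/Wirthm\"uller/suspension diagram and the fact that $A(K)\to A(\quot KH)$, $[\quot KL]\mapsto[(\quot KL)^H]$, is a ring homomorphism and hence sends units to units; and for iv)$\Rightarrow$i) it uses that an element of $A(H)\cong\colim_W\{\setS^W,\setS^W\}_H$ is a unit if and only if all its fixed-point restrictions are, rather than invoking a Thom-class version of the full equivalence. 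Your route buys economy: the Burnside-ring mechanics are outsourced to the cited Thom-class result, and all remaining work is concentrated in the two naturality checks of the duality isomorphism, which are diagram chases of exactly the type the paper carries out in i)$\Rightarrow$ii). The paper's route is more self-contained and makes those Burnside-ring facts explicit (they are reused elsewhere). One imprecision to fix in your write-up: for a fixed $H$, the generator condition at an orbit $\quot GK$ is not equivalent to the single nonequivariant condition for $\tau^H\big|e$; units in $A(K)$ are detected only by the whole family of fixed-point degrees over all subgroups, so the Thom-class statement you cite for iv)$\Rightarrow$i) must be quantified over all $H$ simultaneously --- this is exactly the mark-homomorphism argument the paper makes explicit.
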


\begin{proof}
 i) $\implies$ ii): A fundamental class $\calO$ is characterized by the property that for every $G$-embedding $\varphi:\quot GH\to M$, $\calO$ maps to a generator 
 of $\calH_V^G(T\nu_\varphi)$, where $\nu_\varphi$ is the Thom space of a normal bundle associated with the embedding. 

 So let $\varphi:\quot HK\to M$ be an $H$-map and let $\Phi:\quot GK\to M$ be its unique extension to a $G$-map. We can write
 \[
  V=L\oplus N=L_1\oplus L_2\oplus N,
 \]
 where $L=T_{[e]}\quot GK$, $L_1=T_{[e]}\quot HK$ and $L_2=T_{[e]}\quot GH$. Then, the Thom space of the embedding $\varphi$ is $H$-homeomorphic to 
 $H_+\wedge_K\setS^{L_2\oplus N}$ and the Thom space of the embedding $\Phi$ is $G$-homeomorphic to $G_+\wedge_K\setS^N$. We have to investigate the diagram
 \[
  \xymatrix{\calH_V^G(M_+)\ar[r]^{\psi_*^G}\ar[ddd]^{r^G_H}&\calH_V^G(T\nu_{\Phi})\ar[d]^\cong\\
  &\calH_V^G(G_+\smash_HH_+\smash_K\setS^{V-L})\ar[d]^{w^G_H}\\
  &\calH_V^H(H_+\smash_K\setS^{V-L_1})\ar[d]^\cong\\
  \calH_V^H(M_+)\ar[r]^{\psi^H_*}&\calH_V^H(T\nu_\varphi),
  }
 \]
 where $\psi^G$ and $\psi^H$ are the Pontryagin-Thom maps of the embeddings $\Phi$ and $\varphi$, respectively. The map down the right is essentially the Wirthm\"{u}ller 
 isomorphism. If we could show that the diagram commutes, the claim would follow. For this, it suffices to show that the dual of the diagram commutes. Again we embed $M$ into
 a $G$-representation $W$ with normal bundle $\nu$ and use the facts that
 \[
  D(M_+)\cong\Sigma^{-W}T\nu,\;D(T\nu_\varphi)\cong\Sigma^{-W}T(\varphi^*\nu),\;D(T\nu_\Phi)\cong\Sigma^{-W}T(\Phi^*\nu)
 \]
 and $D(\psi^G)=\Sigma^{-W}T\Phi,\;D(\psi^H)=\Sigma^{-W}T\varphi$. As before, $\nu_\Phi, \nu_\phi$ are normal bundles of the embeddings $\varphi$ and $\Psi$ into $M$. The first 
 three isomorphisms have been used before, and the identification of the dual of the Pontryagin-Thom maps follows from Proposition III.5.5 of \cite{may}.

 The dual diagram therefore has the form
\[
 \xymatrix{
 H^{W-V}_G(T\nu)\ar[r]^{T\Phi^*}\ar[ddd]^{r^G_H}&H^{W-V}_G(T(\Phi^*\nu))\ar[d]^\cong\\
 &H^{W-V}_G(G_+\smash_HH_+\smash_K\setS^{W-V})\ar[d]^{w^G_H}\\
 &H^{W-V}_H(H_+\smash_K\setS^{W-V})\ar[d]^\cong\\
 H^{W-V}_H(T\nu)\ar[r]^{T\varphi^*}&H^{W-V}_H(T(\varphi^*\nu)).
 }
\]
But now it is obvious that this diagram commutes, because the restriction $r^G_H$ is actual restriction of $G$-maps to $H$-maps, $w^G_H$ is the usual adjunction and 
$T\Phi=G_+\smash_HT\varphi$.  

ii) $\implies$ i): This is trivial.

ii) $\implies$ iii): We can assume that $H$ is normal in $G$, otherwise we restrict to $N(H)$ first. Let $K\subseteq G$ be a subgroup containing $H$ and 
$\varphi:\quot GK\to M$ an embedding. If $G_+\wedge_K\setS^N$ is the Thom space of that embedding, since $H$ is normal, we have
\[
 \left(G_+\wedge_K\setS^N\right)^H\cong{\quot GH}_+\wedge_{\quot KH}\setS^{N^H}.
\]
Now consider the commutative diagram
\[
 \xymatrix{
          \calH_V^G(M_+)\ar[r]\ar[d]_\psi&\calH_{V^H}^{\quot GH}(M^H_+)\ar[d]^{\psi^H}\\
          \calH_V^G(G_+\wedge_K\setS^N)\ar[r]\ar[d]_{w^G_K}&\calH_{V^H}^{\quot GH}({\quot GH}_+\wedge_{\quot KH}\setS^{N^H})\ar[d]^{w^{\quot GH}_{\quot GK}}\\
          \calH_V^K(\setS^V)\ar[r]\ar[d]_\sigma&\calH_{V^H}^{\quot KH}(\setS^{V^H})\ar[d]^\sigma\\
          \calH_0^K(\setS^0)\ar[r]\ar[d]_\cong&\calH_0^{\quot KH}(\setS^0)\ar[d]^\cong\\
          A(K)\ar[r]&A(\quot KH),
          }
\]
where the horizontal maps are restrictions and the vertical maps are Pontryagin-Thom maps, Wirthm\"{u}ller isomorphisms and suspensions, respectively. The lowest horizontal map 
sends a generator $[\quot KL]$ to $[(\quot KL)^H]$. This map is a ring homomorphism, so it sends units to units. Thus, if we start with the fundamental class in the upper left, we 
end up with a unit in the lower left by definition, and this maps to a unit in the lower right. Therefore, the restriction of the fundamental class maps to a unit via the 
vertical map on the right, which characterizes it as a fundamental class.

iii) $\implies$ iv): This follows from the equivalence of i) and ii).

iv) $\implies$ i): The fundamental class is determined by the property that it maps to a unit under
\[
 \calH_V^G(M_+)\to\calH_V^G(G_+\wedge_H\setS^N)\cong\calH_V^H(\setS^V)\cong A(H)\cong\colim_W\{\setS^W,\setS^W\}_H.
\]
An element in $\colim_W\{\setS^W,\setS^W\}_H$ is a unit if and only if its restriction to the colimit of the sets $\{\setS^{W^L},\setS^{W^L}\}$ is a unit for every subgroup $L$ of 
$H$. From this, the claim follows immediately.
\end{proof}

We can proceed to show that the equivariant Lefschetz number behaves well under restriction.

\begin{corollary}\label{prop:lefschetzrestriction}
Let $M$ be a $G$-manifold with finite orbit type and let $H\subseteq G$ be a subgroup. Let $\eta_H:A(G)\to A(W(H))$ be the fixed point homomorphism. Then
 \[
  \eta_H(L_G(f))=L_{W(H)}(f^H).
 \]
\end{corollary}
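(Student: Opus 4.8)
The plan is to observe that the whole chain of maps defining $L_G(f)$ intertwines, under the fixed-point functor $(-)^H$, with the corresponding chain defining $L_{W(H)}(f^H)$, after which the equality follows because $\eta_H$ is the map induced on $\pi_0^G(\setS^0)\cong A(G)$ by passage to $H$-fixed points.

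First I would carry out the reduction to the case of a $V$-manifold. Embedding $M$ in a $G$-representation $V$ with tubular neighbourhood $U$ and replacing $f$ by $f_0=i\circ f\circ r$ (with $i\colon M\to U$ the embedding, $r\colon U\to M$ the retraction) commutes with $(-)^H$: $M^H$ embeds in $V^H$ with tubular neighbourhood $U^H$, and $(f_0)^H=i^H\circ f^H\circ r^H$ is the auxiliary self-map of $U^H$ attached to $f^H$; moreover $\mathrm{Fix}(f^H)=\mathrm{Fix}(f)^H$ is compact. So it suffices to prove the identity when $M$ is itself an orientable $V$-manifold, possibly non-compact but with compact fixed set. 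For such $M$, choosing an invariant neighbourhood $N$ of $\mathrm{Fix}(f)$ with $\conj{N}$ a manifold with boundary, $L_G(f)$ is the image of the Thom class $\tau$ of the diagonal embedding of $M$ into $M\times M$ under
\[
 H^V_G(T\nu^{M\times M}_\Delta)\slra{(\id,f)^*} H^V_G(\quot{\conj{N}}{\partial N})\slra{\cong}\calH_0^G(N_+)\slra{\eps}A(G),
\]
the middle isomorphism being Poincar\'{e} duality for $\conj{N}$ (capping with its fundamental class) and the last map the augmentation; and $N^H$, $\conj{N}^H$ with boundary $(\partial N)^H$ furnish exactly the corresponding data for $f^H$, so the chain for $f^H$ is the $H$-fixed-point chain of the one for $f$.

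The heart of the argument is then the compatibility of the restriction-to-$H$-fixed-sets natural transformations of Theorem \ref{thm:ordinaryproperties}(v) — available because $\conj{A}_{\quot GG}^H\cong\conj{A}_{\quot{W(H)}{W(H)}}$, and defined, when $H$ is not normal, via first restricting to $N(H)$ — with each map in this chain. Here $(M\times M)^H=M^H\times M^H$, $\Delta^H$ is the diagonal of $M^H$, and the normal bundle of $\Delta$ (which is $\cong TM$) restricts on $H$-fixed points to the normal bundle of $\Delta^H$, so $(T\nu^{M\times M}_\Delta)^H=T\nu^{M^H\times M^H}_{\Delta^H}$; the restriction map sends $\tau$ to a Thom class for $\Delta^H$ by the Thom-class analogue of Proposition \ref{prop:fundamentalrestriction} proved in \cite{costenoble}, and sends the fundamental class of $\conj{N}$ to a fundamental class of $\conj{N}^H$ by Proposition \ref{prop:fundamentalrestriction}(iii). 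Since the $H$-fixed part of $(\id,f)$ is the map $(\id,f^H)$ appearing in the $W(H)$-chain, $(\id,f)^*$ commutes with restriction by naturality; the middle isomorphism commutes with restriction because the Poincar\'{e} duality pairing respects restriction to fixed spaces and restriction preserves fundamental classes; and the augmentation, induced by the collapse $N_+\to\setS^0$, commutes with restriction as well. Since $L_G$ and $L_{W(H)}$ are independent of the choices of embedding, neighbourhood, fundamental class and Thom class, we may take the $W(H)$-data to be the restrictions of the $G$-data; then the two chains form a commutative ladder linked by the restriction transformation, and chasing $\tau$ through it shows that $L_G(f)\in\calH_0^G(\setS^0)$ maps to $L_{W(H)}(f^H)\in\calH_0^{W(H)}(\setS^0)$ under the restriction map. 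Finally, under $\calH_0^G(\setS^0)\cong A(G)$ and $\calH_0^{W(H)}(\setS^0)\cong A(W(H))$ this restriction map is exactly the fixed-point homomorphism $\eta_H$ — the composite of restriction to $N(H)$ with geometric $H$-fixed points on $\pi_0$ (see \cite{tomdieck}) — which gives the claim.

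The \emph{main obstacle} I anticipate is not a single computation but the bookkeeping around this last identification, together with verifying that each isomorphism invoked to set up the rephrased chain — those of Theorem \ref{thm:ordinaryproperties} and Proposition \ref{prop:fundamentalthomclass}, and the Poincar\'{e} duality isomorphism — is itself natural with respect to restriction to fixed sets. Once those naturalities are recorded, the compatibilities of $(\id,f)^*$, the Pontryagin--Thom collapse, the cap product and the augmentation are routine.
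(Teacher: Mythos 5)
Your proposal is correct and follows essentially the same route as the paper: apply the fixed-point restriction transformation to the defining diagram of the Lefschetz number, use that Thom classes and fundamental classes restrict to Thom and fundamental classes of the fixed-point data (Proposition \ref{prop:fundamentalrestriction} and its Thom-class analogue), and identify the restriction map on $\calH_0^G(\setS^0)\cong A(G)$ with $\eta_H$. The paper additionally spells out the compact orientable $V$-manifold case via restriction of $\calO_\Delta\tensor\calO_\Gamma$, but its treatment of the general case is exactly your argument.
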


\begin{proof}
This follows in the case where $M$ is a compact orientable $V$-manifold from the commutative diagram
\[
 \xymatrix{
           \calH_V^G(M_+)\ar[rr]^{\Delta^*\tensor(\id, f)^*\hspace*{2.75cm}}\ar[d]&&\calH_V^G((M\times M)_+)\tensor\calH_V^G((M\times M)_+)\ar[d]\\
           \calH_V^G(M^H_+)\ar[rr]^{\Delta_H^*\tensor(\id, f^H)^*\hspace*{2.75cm}}&&\calH_{V^H}^{W(H)}((M^H\times M^H)_+)\tensor\calH_{V^H}^{W(H)}((M^H\times M^H)_+),
          }
\]
where the vertical maps are restriction maps. The fundamental class $\calO_M$ of $M$ maps to $\calO_\Delta\tensor\calO_{\Gamma_f}$ under the upper horizontal map. Since $\calO_M$
restricts to $\calO_{M^H}$ by Proposition \ref{prop:fundamentalrestriction}, the restriction of $\calO_\Delta\tensor\calO_{\Gamma_f}$ is $\calO_{\Delta_H}\tensor\calO_{\Gamma_{f^H}}$, 
where $\Delta_H$ denotes the diagonal of $M^H$. From this, the formula for the Lefschetz numbers follows immediately.   

The case for more general $M$ follows similarly by writing down the defining diagrams of the Lefschetz number, applying restriction and using the fact that Thom classes restrict 
to Thom classes, compare the remarks before Proposition \ref{prop:fundamentalrestriction}.
\end{proof}

We proceed to consider some special cases of Thom classes. Namely, assume that $P\subseteq M$ is an invariant $G$-orientable submanifold of the $G$-orientable $V$-manifold $M$
and moreover that $P$ is a $W$-manifold for some $W\subseteq V$, a $G$-subrepresentation. Let $U$ be an invariant tubular neighbourhood of $P$. We can regard this as a 
$V-W$-bundle $\nu^M_P:U\to P$, and this bundle has a Thom class $\tau$ living in $H^{V-W}_G(T\nu^M_P)$. Now we have the Pontryagin-Thom map $M_+\to T\nu^M_P$, and we denote 
the image of $\tau$ under the induced map
\[
 H^{V-W}_G(T\nu^M_P)\to H^{V-W}_G(M_+)
\]
by $\tau^M_P$ and call it the Thom class of $P$ in $M$.

\begin{lemma}\label{lem:thomclassnormalbundle}
 Let $M$ be an orientable $V$-manifold and $P, Q$ be orientable $Z_P$- and $Z_Q$-submanifolds, respectively. Assume that $P\cap Q$ is a $Z$-manifold with $V-Z_P=Z_Q-Z\in RO(G)$ and such 
 that 
 \[
  {i^P_{P\cap Q}}^*(\nu_P^M)=\nu^Q_{P\cap Q}
 \] 
 is a normal bundle for the inclusion $i^Q_{P\cap Q}:P\cap Q\to Q$. Then if $\tau^M_P$ is a Thom class for $P$ in $M$, ${i^M_Q}^*(\tau^M_P)$ is a Thom class for $P\cap Q$ in $Q$. 
\end{lemma}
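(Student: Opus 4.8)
The plan is to reduce the claim to the corresponding statement about Thom classes of vector bundles over orbits, where ``being a Thom class'' is detected fibrewise by the restriction criterion of the Thom class definition. First I would unwind the definition of $\tau^M_P$: it is the image of a Thom class $\tau \in H^{V-Z_P}_G(T\nu^M_P)$ under the map induced by the Pontryagin--Thom collapse $c_P : M_+ \to T\nu^M_P$. Similarly, a Thom class for $P\cap Q$ in $Q$ is, by definition, the image under $c_{P\cap Q}: Q_+ \to T\nu^Q_{P\cap Q}$ of a genuine Thom class of the bundle $\nu^Q_{P\cap Q}$. The hypothesis ${i^P_{P\cap Q}}^*(\nu^M_P) = \nu^Q_{P\cap Q}$ gives us a bundle map $T\nu^Q_{P\cap Q} \to T\nu^M_P$ covering the inclusion $i^P_{P\cap Q}$, and the key geometric input is that the square
\[
 \xymatrix{
  Q_+ \ar[r]^{c_{P\cap Q}\quad} \ar[d]_{i^M_Q} & T\nu^Q_{P\cap Q} \ar[d] \\
  M_+ \ar[r]^{c_P\quad} & T\nu^M_P
 }
\]
commutes (up to $G$-homotopy). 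This is the standard compatibility of Pontryagin--Thom collapses: the tubular neighbourhood of $P\cap Q$ in $Q$ is the intersection with $Q$ of the tubular neighbourhood of $P$ in $M$, precisely because ${i^P_{P\cap Q}}^*(\nu^M_P)$ is the normal bundle $\nu^Q_{P\cap Q}$; collapsing the larger neighbourhood and then restricting to $Q$ is the same as collapsing the smaller one inside $Q$.

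Granting this square, the proof is a diagram chase. Applying $H^{V-Z_P}_G(-) = H^{Z_Q-Z}_G(-)$ and using naturality of the cup product with the Thom diagonal (Proposition in section 2, naturality of products), I get that ${i^M_Q}^*(\tau^M_P)$ equals the image under $c_{P\cap Q}^*$ of the restriction of $\tau$ along the bundle map $T\nu^Q_{P\cap Q} \to T\nu^M_P$. So it remains to check that this restriction of $\tau$ is a Thom class for $\nu^Q_{P\cap Q}$ in the sense of the Thom class definition, i.e. that for every $G$-map $\varphi:\quot GH \to P\cap Q$ the image of the restricted class in $A(H)$ (via the chain $H^{Z_Q-Z}_G(T\nu^Q_{P\cap Q}) \to H^{Z_Q-Z}_G(T(\varphi^*\nu^Q_{P\cap Q})) \cong H^{Z_Q-Z}_H(\setS^{Z_Q-Z}) \cong A(H)$) is a unit. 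But $\varphi$ composed with $i^P_{P\cap Q}$ is a $G$-map $\quot GH \to P$, and $\varphi^*(\nu^Q_{P\cap Q}) = \varphi^* {i^P_{P\cap Q}}^* (\nu^M_P) = (i^P_{P\cap Q}\circ\varphi)^*(\nu^M_P)$; so the fibrewise restriction criterion for $\tau$ being a Thom class for $\nu^M_P$ directly yields that the restriction satisfies the criterion for $\nu^Q_{P\cap Q}$. One must be a little careful that the numerical/representation bookkeeping works out --- here is where $V - Z_P = Z_Q - Z$ is used, so that the grading $Z_Q - Z$ really is the codimension of $P\cap Q$ in $Q$ and matches the grading $V - Z_P$ of the ambient Thom class.

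The main obstacle I anticipate is verifying the commutativity of the collapse-map square up to $G$-homotopy with full rigor: one needs a compatible choice of $G$-invariant tubular neighbourhoods of $P$ in $M$ and of $P\cap Q$ in $Q$, and the identification ${i^P_{P\cap Q}}^*(\nu^M_P) = \nu^Q_{P\cap Q}$ has to be promoted from an isomorphism of bundles to an actual geometric compatibility of neighbourhoods. Equivariant tubular neighbourhood theory (available since all spaces are smooth $G$-manifolds) makes this routine but somewhat fiddly; alternatively one can bypass the geometry entirely by embedding everything into a $G$-representation $W$ and transporting the whole argument through Spanier--Whitehead duality exactly as in Proposition \ref{prop:fundamentalthomclass} and Proposition \ref{prop:fundamentalrestriction}, replacing collapse maps by their duals (inclusions of Thom spectra). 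Everything else is formal naturality of cross and cup products plus the already-established behaviour of Thom classes under pullback along $G$-maps of orbits.
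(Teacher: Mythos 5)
Your proposal is correct and follows essentially the same route as the paper's proof: one first checks, via the fibrewise criterion $\varphi^*(\nu^Q_{P\cap Q})=(i^P_{P\cap Q}\circ\varphi)^*(\nu^M_P)$ over orbits $\quot GH\to P\cap Q$, that the pullback of the Thom class of $\nu^M_P$ along $T(i^P_{P\cap Q})$ is a Thom class for $\nu^Q_{P\cap Q}$, and then uses exactly the commutative square relating the two Pontryagin--Thom collapses with $i^M_Q$ to conclude ${i^M_Q}^*(\tau^M_P)=\tau^Q_{P\cap Q}$. The compatibility of tubular neighbourhoods that you flag as the delicate point is precisely what the paper extracts from the hypothesis ${i^P_{P\cap Q}}^*(\nu^M_P)=\nu^Q_{P\cap Q}$, so no further idea is needed.
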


\begin{proof}
Firstly, let $\varphi:\quot GH\to P\cap Q$ be any $G$-map. $i^P_{P\cap Q}\circ\varphi:\quot GH\to P$ is a $G$-map making the diagram
\[
 \xymatrix{
          H^{V-Z_P}_G(T\nu^M_P)\ar[r]^{T\varphi^*T{i^P_{P\cap Q}}^*}\ar[d]_{T({i^P_{P\cap Q}})^*}&H^{V-Z_P}_G(T(\varphi^*{i^P_{P\cap Q}}^*(\nu^M_P)))\ar[d]^\cong\\
          H^{V-Z_P}_G(T({i^P_{P\cap Q}}^*(\nu^M_P)))\cong H^{Z_Q-Z}_G(T\nu^Q_{P\cap Q})\ar[r]^{\hspace*{1.5cm}T\varphi^*}&H^{Z_Q-Z}_G(T(\varphi^*(\nu^Q_{P\cap Q})) 
          }
\]
commutative. This shows that the image of the Thom class of $\nu^M_P$ is a Thom class for $\nu^Q_{P\cap Q}$. 

Secondly, let $U\subseteq M$ be an invariant tubular neighbourhood for $P$ in $M$ such that $\nu^M_P:U\to P$ is the bundle projection of the normal bundle. By assumption, the pullback of 
this bundle via $i^P_{P\cap Q}$ is a normal bundle for $P\cap Q$ in $Q$, which implies that the diagram
\[
 \xymatrix{
          T(\nu^Q_{P\cap Q})\ar[d]_{T({i^P_{P\cap Q}})}&Q_+\ar[l]\ar[d]^{i^M_Q}\\
          T(\nu^M_P)&M_+\ar[l]
          }
\]
commutes. It follows that in cohomology,
\[
 \xymatrix{
          H^{V-Z_P}_G(T(\nu^Q_{P\cap Q}))\ar[r]&H^{V-Z_P}_G(Q_+)\\
          H^{V-Z_P}_G(T(\nu^M_P))\ar[u]^{T({i^P_{P\cap Q}})^*}\ar[r]&H^{V-Z_P}_G(M_+)\ar[u]_{{i^M_Q}^*}
          }
\]
commutes. Starting with the Thom class in the lower left, we have $\tau^M_P$ in the lower right, which maps up to ${i^M_Q}^*(\tau^M_P)$. In the upper left we have the Thom class for
$\nu^Q_{P\cap Q}$, which by definition maps to $\tau^Q_{P\cap Q}$ under the upper horizontal map. Thus, the identity ${i^M_Q}^*(\tau^M_P)=\tau^Q_{P\cap Q}$ is proven.
\end{proof}

In Proposition \ref{prop:thomclassproduct}, we have calculated that cross products of Thom classes are Thom classes. A similar result holds for the cup product and Thom classes 
of submanifolds. We need the following variation of the Thom diagonal. Let $P\subseteq M\subseteq W$ be equivariant embeddings of invariant submanifolds. We find an invariant 
tubular neighbourhood $U^W_P$ of $P$ in $W$, an invariant tubular neighbourhood $U^M_P\subseteq U^W_P$ of $P$ in $M$ and an invariant tubular neighbourhood $U^W_M$ of $M$ in $W$. 
Let $s:U^W_M\to M$ be the tubular retraction. Then we can choose the neighbourhoods in a way that, if $z\in\partial U^W_P$, then either $z\notin U^W_M$, or $s(z)\notin U^M_P$.
For example, we can choose $U^W_P$ such that it contains $s^{-1}(\conj{U^M_P})$. With this choice, the map
\[
 d:T\nu^W_P\to T\nu^M_P\wedge T\nu^W_M,\;x\mapsto[s(x), x],
\]
is well defined.

\begin{lemma}\label{lem:cupproductthomclass}
 Let $M$ be an orientable $V$-manifold, $P\subseteq M$ an orientable $Z$-submanifold and $M\to W$ an embedding of $M$ into a $G$-representation $W$. Let $\nu^M_P$ be a normal bundle for the 
 embedding $P\to M$, $\nu^W_M$ a normal bundle for the embedding $M\to W$ and let $\nu^W_P$ be the induced normal bundle of the embedding $P\to W$. Let
 \[
  \tau^M_P\in H^{V-Z}_G(T\nu^M_P),\;\tau^W_M\in H^{W-V}_G(T\nu^W_M)
 \]
 be Thom classes for the normal bundles $\nu^M_P$, $\nu^W_M$, respectively. Then $\tau^M_P\cup\tau^W_M\in H^{W-Z}_G(T\nu^W_P)$ is a Thom class for the bundle $\nu^W_P$, where the cup product
 is taken with respect to the map
\[
 d:T\nu^W_P\to T\nu^M_P\wedge T\nu^W_M,\;x\mapsto[s(x), x]
\]
 defined above.
\end{lemma}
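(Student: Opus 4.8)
The strategy is to verify the defining property of a Thom class directly: for every $G$-map $\varphi:\quot GH\to P$, I must show that the image of $\tau^M_P\cup\tau^W_M$ under the restriction-to-$\varphi$ maps
\[
 H^{W-Z}_G(T\nu^W_P)\to H^{W-Z}_G(T(\varphi^*\nu^W_P))\cong H^{W-Z}_G(\Sigma^{W-Z}{\quot GH}_+)\cong H^0_G({\quot GH}_+)\cong A(H)
\]
is a unit. The two ingredients are already available: $\tau^M_P$ restricts along $\varphi$ to a generator of $H^{V-Z}_G(T(\varphi^*\nu^M_P))$ because it is a Thom class for $\nu^M_P$, and $\tau^W_M$ restricts along the composite $i^M_P\circ\varphi:\quot GH\to M$ to a generator of $H^{W-V}_G(T((i^M_P\varphi)^*\nu^W_M))$ since it is a Thom class for $\nu^W_M$. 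The point is to fit these together using multiplicativity of the cup product.

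First I would set up the naturality square for the cup product with respect to the diagonal $d$. Pulling $\varphi$ back through the diagram defining $d$, one obtains a map $d_\varphi:T(\varphi^*\nu^W_P)\to T(\varphi^*\nu^M_P)\smash T((i^M_P\varphi)^*\nu^W_M)$ compatible with $d$ via the bundle maps $T\nu^W_P\to T\nu^M_P\smash T\nu^W_M$; here the retraction $s$ restricts correctly because $\varphi$ lands in $P\subseteq M$, so on the pulled-back bundle over $\quot GH$ the section data is just the zero section composed with $\varphi$. By Proposition 2.6 (naturality of the cup product), the square
\[
 \xymatrix{
 H^{V-Z}_G(T\nu^M_P)\tensor H^{W-V}_G(T\nu^W_M)\ar[r]^{\hspace*{1cm}\cup_d}\ar[d]&H^{W-Z}_G(T\nu^W_P)\ar[d]\\
 H^{V-Z}_G(T(\varphi^*\nu^M_P))\tensor H^{W-V}_G(T((i^M_P\varphi)^*\nu^W_M))\ar[r]^{\hspace*{2cm}\cup_{d_\varphi}}&H^{W-Z}_G(T(\varphi^*\nu^W_P))
 }
\]
commutes. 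So it remains to identify the bottom row: I would show that under the Wirthm\"uller/suspension identifications of all three Thom spaces with suspensions of ${\quot GH}_+$ (here $\varphi^*\nu^M_P\cong\Sigma^{V-Z}{\quot GH}_+$, $(i^M_P\varphi)^*\nu^W_M\cong\Sigma^{W-V}{\quot GH}_+$, $\varphi^*\nu^W_P\cong\Sigma^{W-Z}{\quot GH}_+$, using $\varphi^*\nu^W_P\cong\varphi^*\nu^M_P\oplus(i^M_P\varphi)^*\nu^W_M$ which holds since the normal bundle splits along the embeddings), the map $d_\varphi$ becomes, after collapsing suspension coordinates, the smash diagonal ${\quot GH}_+\to{\quot GH}_+\smash{\quot GH}_+$ suspended up. Then $\cup_{d_\varphi}$ is exactly the product $A(H)\tensor A(H)\to A(H)$ induced by $\Delta^*$ on $H^0_G$, precisely as in the final column of the diagram in the proof of Proposition 4.5. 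A tensor product of generators thus maps to a generator, which is what we want.

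The main obstacle is the bookkeeping in the middle step: verifying that the chosen $d$ really does restrict to the smash diagonal after pulling back along $\varphi$, i.e. that the geometric conditions imposed on the tubular neighbourhoods ($U^W_P\supseteq s^{-1}(\overline{U^M_P})$, and the boundary condition forcing $d$ to be well defined) are compatible with the identification $\varphi^*\nu^W_P\cong\varphi^*\nu^M_P\oplus(i^M_P\varphi)^*\nu^W_M$ of bundles over the orbit. This is essentially the equivariant version of the classical fact that, under the Thom diagonal, a product of Thom classes restricts to a product of local Thom classes; once one fixes compatible linear charts $V\cong L\oplus N_P$ and $W\cong V\oplus N_M$ over $\quot GH$ adapted to the splitting, the map $d_\varphi$ is the obvious one and everything is formal. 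I would handle this by reducing, via the tubular neighbourhood theorem and the $V$-manifold hypothesis, to the model case where $P$, $M$, $W$ near the image of $\varphi$ are linear bundles over $\quot GH$, where the statement is a direct computation. The rest of the proof is then a diagram chase invoking Proposition 2.6 and Proposition 4.5 as above.
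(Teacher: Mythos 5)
Your proposal is correct and follows essentially the same route as the paper: restrict along an arbitrary orbit map $\varphi$, use naturality of the cup product to compare $d$ with the induced map over $\quot GH$ (your $d_\varphi$, the paper's $d'$ coming from the diagonal of $G_+$ and an identification of spheres), then pass through Wirthm\"uller and suspension isomorphisms to reduce to the multiplication $A(H)\tensor A(H)\to A(H)$, where a product of units is a unit. The bookkeeping point you flag (that $d$ pulls back to the smash diagonal over the orbit) is exactly the step the paper also treats by identifying the pulled-back Thom spaces with $G_+\wedge_H\setS^{V-Z}$, $G_+\wedge_H\setS^{W-V}$, $G_+\wedge_H\setS^{W-Z}$, so no essential difference in method.
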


\begin{proof}
Let $\varphi:\quot GH\to P$ be a $G$-embedding. $\varphi$ induces maps $T\varphi^W_P:T(\varphi^*(\nu^W_P))\to T\nu^W_P$, $T\varphi^M_P:T(\varphi^*(\nu^M_P))\to T\nu^M_P$ and 
$T\varphi^W_M:T(\varphi^*(\nu^W_M))\to T\nu^W_M$. These fit together, after identification of the Thom spaces over orbits, in the diagram
\[
 \xymatrix{
           T\nu^W_P\ar[r]^{d\quad\quad}&T\nu^M_P\wedge T\nu^W_M\\
           G_+\wedge_H\setS^{W-Z}\ar[r]^{d'\quad\quad}\ar[u]^{T\varphi^W_P}&G_+\wedge_H\setS^{V-Z}\wedge G_+\wedge_H\setS^{W-V}\ar[u]_{T\varphi^M_P\smash T\varphi^W_M},
          }
\]
where $d'$ is induced by the diagonal map on $G_+$ and an identification of the spheres.

This implies commutativity of
\[
 \xymatrix{
  H^{V-Z}_G(T\nu^M_P)\tensor H^{W-V}_G(T\nu^W_M)\ar[r]^{\hspace*{1.75cm}\cup_d}\ar[d]&H^{W-Z}_G(T\nu^W_P)\ar[d]\\
  H^{V-Z}_G(G_+\wedge_H\setS^{V-Z})\tensor H^{W-V}_G(G_+\wedge_H\setS^{W-V})\ar[r]^{\hspace*{1.75cm}\cup_{d'}}&H^{W-Z}_G(G_+\wedge_H\setS^{W-Z}).
  }
\]
Again, we can complete the diagram using Wirthm\"{u}ller and suspension isomorphisms together with the fact that the cup product respects both of these to obtain a commutative 
diagram
\begin{small}
\[
\xymatrix{
          H^{V-Z}_G(T\nu^M_P)\tensor H^{W-V}_G(T\nu^W_M)\ar[r]^{\hspace*{1.75cm}\cup_d}\ar[d]&H^{W-Z}_G(T\nu^W_P)\ar[d]\\
          H^{V-Z}_G(G_+\wedge_H\setS^{V-Z})\tensor H^{W-V}_G(G_+\wedge_H\setS^{W-V})\ar[r]^{\hspace*{1.75cm}\cup_{d'}}\ar[d]&H^{W-Z}_G(G_+\wedge_H\setS^{W-Z})\ar[d]\\
          H^{V-Z}_H(\setS^{V-Z})\tensor H^{W-V}_H(\setS^{W-V})\ar[r]^\times\ar[d]& H^{W-Z}_H(\setS^{W-Z})\ar[d]\\
          A(H)\tensor A(H)\ar[r]^\mu\ar[r]&A(H).
          }
\]
\end{small}
Starting with the product $\tau^M_P\tensor\tau^W_M$ in the upper left, this maps to a product of units in $A(H)\tensor A(H)$ and thus to a unit in the lower right, implying that
$\tau^M_P\cup_d\tau^W_M=\tau^W_P$ is a Thom class.
\end{proof}

It is true non-equivariantly that Thom classes of embeddings of submanifolds are dual to the fundamental classes of the submanifold. We can use our knowledge of Thom classes 
of submanifolds to establish this result in the equivariant world, thereby justifying the definition of the equivariant Lefschetz number for general $G$-manifolds. 

\begin{lemma}\label{lem:fundamentalthomduality}
 Let $P\subseteq M$ be a $Z$-submanifold of the $V$-manifold $M$. Let $\tau^M_P$ be a Thom class of $P$ in $M$. Let $\calO^M_P$ be the image of its corresponding fundamental 
 class $\calO_P$ under the inclusion $P\to M$. Then
\[
 \tau^M_P\cap\calO_M=\calO^M_P.
\]
\end{lemma}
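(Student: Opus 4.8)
The plan is to prove the identity $\tau^M_P \cap \calO_M = \calO^M_P$ by reducing both sides to the Thom isomorphism. Let $M \to W$ be an equivariant embedding into a $G$-representation with normal bundle $\nu^W_M$, and let $\nu^W_P$ be the induced normal bundle of the composite embedding $P \to M \to W$, with $\nu^M_P$ the normal bundle of $P$ in $M$. Write $\tau^W_M$ for a Thom class of $\nu^W_M$, so that, by Proposition \ref{prop:fundamentalthomclass}, $\calO_M$ corresponds to $\tau^W_M$ under the isomorphism $\calH^G_V(M_+) \cong H^{W-V}_G(T\nu^W_M)$, and similarly $\calO_P$ corresponds to a Thom class $\tau^M_P$ (up to choices, which we coordinate by selecting compatible Thom classes). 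The cap product $\tau^M_P \cap \calO_M$ lives in $\calH^G_{V - (V-Z)}(M_+) = \calH^G_Z(M_+)$, which is the correct degree for a fundamental class of a $Z$-submanifold pushed into $M$.

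The key step is to translate the cap product $\tau^M_P \cap \calO_M$ through the duality identifications into a cup product of Thom classes, and then invoke Lemma \ref{lem:cupproductthomclass}. Concretely, I would build a commutative diagram analogous to the one in the proof of Proposition \ref{prop:fundamentalthomclass}: the left column identifies $\calH^G_Z(M_+)$ with $H^{W-Z}_G(T\nu^W_P)$ via Spanier-Whitehead duality and suspension (using that the dual of $P_+$, or rather the relevant Pontryagin-Thom collapse, involves $T\nu^W_P$), the right column identifies $\calH^G_V(M_+)$ with $H^{W-V}_G(T\nu^W_M)$, and the horizontal maps are the cap product with $\tau^M_P$ on top and the cup product $\cup_d$ with $\tau^M_P$ on the bottom, where $d: T\nu^W_P \to T\nu^M_P \wedge T\nu^W_M$ is exactly the variant Thom diagonal introduced just before Lemma \ref{lem:cupproductthomclass}. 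Commutativity of this square is the formal heart of the argument; it follows from the definition of the pairing $\scalar{-,-}_d$ in terms of the representing spectra together with the compatibility of Spanier-Whitehead duality with the Pontryagin-Thom collapse (\cite{may}, III.5.5), in the same spirit as the diagram chase in Proposition \ref{prop:fundamentalthomclass}.

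Once that diagram commutes, the conclusion is immediate: starting from $\tau^W_M \in H^{W-V}_G(T\nu^W_M)$ in the lower right (which corresponds to $\calO_M$), the bottom map sends it to $\tau^M_P \cup_d \tau^W_M \in H^{W-Z}_G(T\nu^W_P)$, which by Lemma \ref{lem:cupproductthomclass} is a Thom class $\tau^W_P$ for $\nu^W_P$. By Proposition \ref{prop:fundamentalthomclass} (applied to $P$ this time, embedded in $W$), this Thom class corresponds precisely to the fundamental class $\calO_P \in \calH^G_Z(P_+)$, and tracing through the Pontryagin-Thom collapse $M_+ \to T\nu^M_P$ that defines $\tau^M_P$ shows that the image in $\calH^G_Z(M_+)$ is ${i^M_P}_*(\calO_P) = \calO^M_P$. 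Going around the top of the diagram gives $\tau^M_P \cap \calO_M$, so the two agree.

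The main obstacle I expect is the bookkeeping in the duality diagram: one must be careful that the Thom diagonal $d$ used for the cup product matches the diagonal implicitly used in the cap product $\cap_d = \scalar{-,-}_d$ of Poincaré duality, and that the collapse maps $M_+ \to T\nu^M_P$, $M_+ \to T\nu^W_M$, $P_+ \to T\nu^W_P$ are assembled consistently (this is where the careful choice of nested tubular neighbourhoods from before Lemma \ref{lem:cupproductthomclass} is used). A secondary subtlety is that Thom classes and fundamental classes are only determined up to the choice of orientation/generator, so one should either fix a single coherent system of choices at the outset or verify the identity on the level of ``being a generator locally,'' using Proposition \ref{prop:fundamentalrestriction} to reduce to the behaviour under restriction to orbits $\quot GH$, where everything becomes the statement that a product of units is a unit, exactly as in the proofs of Proposition \ref{prop:thomclassproduct} and Lemma \ref{lem:cupproductthomclass}.
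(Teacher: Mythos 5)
Your proposal takes essentially the same route as the paper: dualize via an embedding $M\to W$, use the modified Thom diagonal and Lemma \ref{lem:cupproductthomclass} to identify $\tau^M_P\cup_d\tau^W_M$ with a Thom class for $\nu^W_P$, invoke the fact that the dual of the inclusion $P\to M$ is the Pontryagin--Thom collapse, and translate back through Proposition \ref{prop:fundamentalthomclass} to obtain $i_*(\calO_P)=\calO^M_P$. The only differences are organizational: the paper verifies the key commutativity by routing through $H^0_G(P_+)$ and Proposition \ref{prop:cupassociativity} to coordinate the three diagonals, and note that strictly it is $\calH^G_Z(P_+)$, not $\calH^G_Z(M_+)$, that is dual to $H^{W-Z}_G(T\nu^W_P)$ --- a slip your final paragraph implicitly corrects by factoring through $\calO_P$ and the pushforward.
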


\begin{proof}
Again we have to fix the various tubular neighbourhoods first. $P$ embeds into $W$ via the composition of the embeddings of $P$ in $M$ and of $M$ in $W$. We find invariant tubular 
neighbourhoods $U$ of $M$ in $W$ and $U'$ of $P$ in $M$, together with retractions $s:U\to M$, $r:U'\to P$. We can arrange this such that $s^{-1}(U')=U''$ is a neighbourhood of 
$P$ in $W$ contained in $U$ and $r\circ s:U''\to P$ is the tubular retraction. We obtain a Pontryagin-Thom map for the inclusion $U''\to U$, i.e. a map 
$\psi:T\nu^W_M\to T\nu^W_P$.

We work on the represented level. Using the fact that the map dual to the inclusion is the Pontryagin-Thom map, which follows from III.3.7 in \cite{may}, the usual
duality diagram as in Proposition \ref{prop:fundamentalthomclass} becomes
\[
 \xymatrix{
          \calH_Z^G(P_+)            \ar[r]^{i_*}                    \ar[d]^\cong         &  \calH_Z^G(M_+)\ar[d]^\cong            \\
          H^{W-Z}_G(T\nu^W_P)\ar[r]^{\psi^*}                                               &  H^{W-Z}_G(T\nu^W_M),                                        
          }
\]
We now start with the bottom row and complete it to the following diagram.
\[
 \xymatrix{
          H^{W-Z}_G(T\nu^W_P)\ar[r]&H^{W-Z}_G(T\nu^W_M)\\
          H^0_G(P_+)\ar[u]^{\cup t^W_P}\ar[ru]^{\cup t^W_P}\ar[d]_{\cup t^M_P}\ar[dr]^{\cup t^M_P}&\\
          H^{V-Z}_G(T\nu^M_P)\ar[r]&H^{V-Z}_G(M_+)\ar[uu]^{\cup t^W_M}
          }
\]
The cup products involved come from different diagonals. The upper left triangle commutes due to commutativity of the diagram
\[
 \xymatrix{
          T\nu^W_M\ar[r]\ar[d]^\psi&P_+\wedge T\nu^W_P\ar[d]^{\id\wedge \id}\\
          T\nu^W_P\ar[r]&P_+\wedge T\nu^W_P.
          }
\]
The upper horizontal map is the modified Thom diagonal, with the $\psi$ in the second component and the retraction onto $P$ in the first. The lower horizontal map is the actual Thom 
diagonal. Naturality of the cup product implies the commutativity of the first triangle.

The lower left triangle commutes by the same reasoning, using the diagram
\[
 \xymatrix{
          M_+\ar[r]\ar[d]&P_+\wedge T\nu^M_P\ar[d]^{\id\wedge\id}\\
          T\nu^M_P\ar[r]&P_+\wedge T\nu^M_P.
          }
\]
Thus, we see that an element $x\in H^0_G(P_+)$ maps to $x\cup t^W_P$ in the upper right. The various diagonals involved fit together in an associativity diagram as in Proposition 
\ref{prop:cupassociativity}. That Proposition therefore implies that mapping $x$ to $x\cup t^M_P$ in the lower right and then with the Thom isomorphism to
$(x\cup t^M_P)\cup t^W_M$ gives $x\cup(t^M_P\cup t^W_M)$. By Lemma \ref{lem:cupproductthomclass}, this is equal to $x\cup t^W_P$, so the whole diagram commutes. The two maps on the left 
are both Thom isomorphisms, so they compose to give an isomorphism which maps the Thom classes to one another.

Pasting the two diagrams together yields
\[
 \xymatrix{
          \calH_Z^G(P_+) \ar[r]^{i_*} \ar[d]^\cong         &  \calH_Z^G(M_+)\ar[d]^\cong            \\
          H^{W-Z}_G(T\nu^W_P)\ar[r]^{\psi^*}\ar[d]^\cong   &  H^{W-Z}_G(T\nu^W_M) \ar[d]^\cong\\
          H^{V-Z}_G(T\nu^M_P)\ar[r]                        &  H^{V-Z}_G(M_+),
          }
\]
and the lower horizontal map is induced by the Pontryagin-Thom map of $P\subseteq M$. The composition up the right is the cap product with the fundamental class, which follows 
from Proposition \ref{prop:fundamentalthomclass} and the definition of the cap product. It therefore follows that starting with the fundamental class of $P$ in the upper left, 
we end up with the Thom class of the embedding $P\to M$ in the lower left, and by definition, we end up with $\tau^M_P$ in the lower right. Then going up gives 
$\tau^M_P\cap\calO_M$, and this is equal, by commutativity of the diagram, to $i_*(\calO_P)=\calO^M_P$.
\end{proof}

\section{Properties of the equivariant Lefschetz number}

We start to give the desired geometric proof of the equivariant Lefschetz fixed point theorem. 

\begin{theorem}\label{thm:lefschetz}
 Let $M$ be a compact orientable $V$-manifold. Then if a $G$-map $f:M\to M$ has no fixed point of orbit type at least $(H)$, we have $\eta_H(L_G(f))=0$.
\end{theorem}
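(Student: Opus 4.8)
The plan is to push the whole question down to the fixed submanifold $M^H$, where the hypothesis forces the relevant self-map to be fixed-point free, and then to observe---exactly as non-equivariantly---that a fixed-point free self-map has trivial Lefschetz number because the graph and the diagonal can be separated.

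First I would apply Corollary~\ref{prop:lefschetzrestriction}, which identifies $\eta_H(L_G(f))=L_{W(H)}(f^H)$, so it suffices to show $L_{W(H)}(f^H)=0$. Since $f$ is a $G$-map it restricts to a $W(H)$-self-map $f^H$ of the fixed submanifold $M^H$, and $Fix(f^H)=Fix(f)\cap M^H$. Any $x\in Fix(f^H)$ lies in $M^H$, hence $H\subseteq G_x$, so $x$ is a fixed point of $f$ of orbit type at least $(H)$; by hypothesis there are none, so $f^H$ is fixed-point free. Moreover $M^H$ is a compact $V^H$-manifold with a $W(H)$-action, and it is orientable because $\calO_M^H$ is a fundamental class for it by Proposition~\ref{prop:fundamentalrestriction} (existence of a fundamental class being equivalent to orientability via Theorem~\ref{thm:thomiso} and Proposition~\ref{prop:fundamentalthomclass}). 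Thus the theorem reduces to the statement: if $N$ is a compact orientable $V$-manifold and $g:N\to N$ is a $G$-map with $Fix(g)=\emptyset$, then $L_G(g)=0$.

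For this I would use the Thom-class description of the Lefschetz number established at the end of Section~3: $L_G(g)$ is the image of the Thom class $\tau$ of the diagonal embedding $\Delta\subseteq N\times N$ under
\[
 H^V_G(T\nu^{N\times N}_\Delta)\slra{\psi^*}H^V_G((N\times N)_+)\slra{(\id,g)^*}H^V_G(N_+)\slra{\cap\,\calO_N}\calH_0^G(N_+)\slra{\eps}A(G),
\]
where $\psi$ is the Pontryagin--Thom collapse of a $G$-invariant tubular neighbourhood of $\Delta$. Because $g$ is fixed-point free, the graph $\Gamma=\{(x,g(x)):x\in N\}$ and $\Delta$ are disjoint compact subsets of $N\times N$, so I may take the tubular neighbourhood of $\Delta$ realizing $T\nu^{N\times N}_\Delta$ small enough that its closure misses $\Gamma$. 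Then $\psi\circ(\id,g):N_+\to T\nu^{N\times N}_\Delta$ sends all of $N$ to the basepoint, hence is equivariantly null and induces $0$ on $H^V_G$; therefore $(\id,g)^*\psi^*(\tau)=0$ and $L_G(g)=\eps(0\cap\calO_N)=0$. (In the intersection-product picture this is just the statement that $\calO_\Delta$ and $\calO_\Gamma$ have disjoint supports, so $\calO_\Delta\bullet\calO_\Gamma=0$.) The only genuinely delicate point is that shrinking the tubular neighbourhood does not change $L_G(g)$; this is precisely the independence of $L_G$ from the choices made, discussed (in parallel with the non-equivariant account of \cite{nussbaum}) after the definition of $L_G$ in Section~3, and here it is immediate since any two admissible tubular neighbourhoods of $\Delta$ are compared through a common smaller one over which the collapse maps agree up to $G$-homotopy. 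I expect this bookkeeping, together with checking that the restriction diagrams really deliver $M^H$ as an orientable $V^H$-manifold with fundamental class $\calO_M^H$, to be the main---though routine---obstacle; the vanishing itself is formal.
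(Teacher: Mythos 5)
Your proposal is correct, and its skeleton matches the paper's: reduce to the fixed-point-free case via Corollary~\ref{prop:lefschetzrestriction} (using Proposition~\ref{prop:fundamentalrestriction} to know $M^H$ carries a fundamental class), then exploit the disjointness of the graph $\Gamma$ and the diagonal $\Delta$. Where you diverge is in how the vanishing is actually extracted. The paper stays inside the intersection-product formalism: it writes $\calO_\Delta\bullet\calO_\Gamma=(\tau_\Gamma\cup\tau_\Delta)\cap\calO_{M\times M}=\delta_*(\delta^*(\tau_\Gamma)\cap\calO_M)$ using Lemma~\ref{lem:fundamentalthomduality} and the cap-product identities, and then kills $\delta^*(\tau_\Gamma)$ by Lemma~\ref{lem:thomclassnormalbundle} applied to the empty intersection $\Delta\cap\Gamma$. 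You instead invoke the local formula $L_G(g)=\eps\bigl((\id,g)^*(\tau^{N\times N}_\Delta)\cap\calO_N\bigr)$ from Section~3 and argue at the space level: with a tubular neighbourhood of $\Delta$ whose closure misses $\Gamma$, the composite $\psi\circ(\id,g)$ is the constant map to the basepoint, so the pulled-back Thom class is zero. The two vanishing mechanisms are dual to each other (you pull the diagonal's Thom class back over the graph; the paper pulls the graph's Thom class back over the diagonal), and both ultimately rest on Lemma~\ref{lem:fundamentalthomduality}, since the Section~3 formula you quote was derived from it. Your route is more geometric and avoids Lemma~\ref{lem:thomclassnormalbundle} and the associativity bookkeeping, at the price of needing that $\tau^{N\times N}_\Delta=\psi^*(\tau)$ does not depend on the chosen tubular neighbourhood of $\Delta$ --- a routine isotopy argument which you correctly flag and which the paper itself leaves implicit when defining Thom classes of submanifolds; the paper's computation, by contrast, never has to shrink neighbourhoods because all choices are absorbed into the already-established lemmas.
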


\begin{proof}
 It obviously suffices to prove the theorem for $H=e$, in the general case, we just replace $G$ by $W(H)$ and use Proposition \ref{prop:lefschetzrestriction}. So we have to show that if 
 $f$ has no fixed points, $L_G(f)=0$. If $f$ has no fixed points, the intersection of the diagonal $\Delta\subseteq M\times M$ with the graph $\Gamma$ of $f$ is empty.

 Both $\Delta$ and $\Gamma$ are orientable $V$-manifolds, fundamental classes are given by the images of the fundamental class of $M$ via the canonical homeomorphisms between $M$ and each
 of these manifolds.

 We denote by $\calO_\Delta$ the image of the fundamental class of $\Delta$ under the diagonal embedding $\Delta\to M\times M$, similarly $\calO_\Gamma$ the image of the fundamental
 class of $\Gamma$ under the embedding $\Gamma\to M\times M$.

 We claim that $\calO_\Delta\bullet\calO_\Gamma=0$. Indeed, let $\tau_\Delta$, $\tau_\Gamma$ be the classes dual to $\calO_\Delta$ and $\calO_\Gamma$, i.e.
 \[
  \tau_\Delta\cap\calO_{M\times M}=\calO_\Delta,\;\tau_\Gamma\cap\calO_{M\times M}=\calO_\Gamma.
 \]
 Then by Lemma \ref{lem:fundamentalthomduality}, $\tau_\Delta$ and $\tau_\Gamma$ are just images of the Thom classes of normal bundles to the respective embeddings of the 
 manifolds $\Delta$ and $\Gamma$ under the Pontryagin-Thom maps. Let $\delta:M\to M\times M$ be the diagonal embedding. We can calculate
 \begin{eqnarray*}
  \calO_\Delta\bullet\calO_\Gamma&=&P(P^{-1}(\calO_\Gamma)\cup P^{-1}(\calO_\Gamma))\\
                                 &=&(\tau_\Gamma\cup\tau_\Delta)\cap\calO_{M\times M}\\
                                 &=&\tau_\Gamma\cap(\tau_\Delta\cap\calO_{M\times M})\\
                                 &=&\tau_\Gamma\cap(\delta_*(\calO_M))\\
                                 &=&\delta_*(\delta^*(\tau_\Gamma)\cap\calO_M)\\
                                 &=&0,
 \end{eqnarray*}
 since $\delta^*(\tau_\Gamma)=0$ by Lemma \ref{lem:thomclassnormalbundle}.
\end{proof}

Next, let us compare the equivariant Lefschetz number we defined with the existing notion of \cite{rosenberg}. The fundamental class $\calO$ of a $V$-manifold $M$ is uniquely 
characterized by the property that its image under the restriction $\calH_V^G(M)\to H_{\abs{V^H}}(M^H)$ is a fundamental class for $M^H$ for every subgroup of $G$. 
Following this restriction map through the defining diagram of the Lefschetz number, we obtain
\[
 \xymatrix{
 \calH_V^G(M)\tensor\calH_V^G(M)\ar[rr]\ar[d]&&\calH_0^G(*)\ar[r]^\cong\ar[d]& A(G)\ar[d]\\
 H_{\abs{V^H}}(M^H)\tensor H_{\abs{V^H}}(M^H)\ar[rr]&&H_0(*)\ar[r]^\cong&\setZ,
          }
\]
and the map on the right is given by sending a generator $[\quot GK]$ to $\abs{(\quot GK)^H}$ or equivalently, sending a stable $G$-map $f:\setS^W\to\setS^W$ to the degree of 
$f^H:\setS^{\abs{W^H}}\to\setS^{\abs{W^H}}$. Since the lower row determines the non-equivariant Lefschetz number $L(f^H)$ of $f^H$, the equivariant Lefschetz number is an element 
of $A(G)$ with the property that its non-equivariant restriction to $H$-fixed points is $L(f^H)$. 

Similarly one can check that this holds true for general $G$-manifolds $M$ and self maps $f:M\to M$ such that the equivariant Lefschetz number is defined. It is shown in 
\cite{tomdieck} that this property uniquely determines the element of $A(G)$. In case that $G$ is finite it therefore follows immediately from definition (4.1) of
the equivariant Lefschetz class in \cite{rosenberg}, that the two equivariant Lefschetz numbers are the same.

We now want to give a more explicit formula for the equivariant Lefschetz number. In order to do this, we make several generic assumptions and simplifications.

\begin{enumerate}[i)]
 \item Firstly, if $f:M\to M$ is a $G$-map, we can embed $M$ into a $G$-representation $V$. In particular, every $G$-orbit in $M$ embeds into $V$. Then the Lefschetz number of $f$
 equals the Lefschetz number of the map $U\to U,\;x\mapsto i\circ f\circ r$, where $U$ is a tubular neighbourhood of $M$ in $V$, $r:U\to M$ the tubular retraction and $i:M\to U$
 the embedding. Thus, we can assume that $M$ is a $V$-manifold such that every $G$-orbit in $M$ embeds into $V$.

 \item The equivariant Lefschetz number is determined via the diagram
\[
 H^V_G(T\nu^{M\times M}_\Delta)\slra{(\id, f)^*}H^V_G(\quot{\conj N}{\partial N})\slra{\cong}\calH_0^G(N_+)\to\calH_0^G(\setS^0),
\]
so it is clear that it is determined by local data. Realizing the Thom space of the diagonal embedding as an arbitrary small neighbourhood of the diagonal, $N$ will be an 
arbitrarily small neighbourhood of the fixed point set of $f$. 

So in order to understand the equivariant Lefschetz number, we need to understand Lefschetz numbers of maps $f:G\times_HW\to G\times_HW$, where $G\times_HW$ is the tubular 
neighbourhood in $V$ of an embedding of the orbit $\quot GH$ and $G\times_H0$ is the unique fixed orbit of $f$.

\item We can furthermore assume that $f$ is smooth and the fixed orbit is non-degenerate, see e.g. \cite{field} or \cite{wruck}. This means that the derivative of $f$ at $[e, 0]$
in direction normal to the orbit has no eigenvalue of unit modulus. This assumption implies that there are no points in a neighbourhood of $G\times_H0$ with $f([g, w])=[h, w]$ 
except for the ones with $w=0$.

\item By Krupa's normal decomposition lemma, see e.g. \cite{field1}, Lemma 6.2, $f$ is of the form
\[
 G\times_HW\to G\times_HW,\;[g,w]\mapsto (g\gamma(w),n(w)),
\]
where $\gamma:W\to G$ is $H$-equivariant with respect to the conjugation action on $G$ and $n:W\to W$ is an $H$-map. 

\item Let $A\subseteq\quot GH$ be an $H$-invariant neighbourhood of $[e]$, $H$-isomorphic to 
$L=T_{[e]}{\quot GH}$. By Lemma 3.10.2 of \cite{field}, we can assume that $A$ is $N(H)$-invariant and there exists an $N(H)$-equivariant section $\sigma:A\to G$, i.e. 
$\sigma([hg])=h\sigma([g])h^{-1}$ for $h\in N(H)$. 

The set
\[
 U=\{([g, w], [g\sigma([g']), w+w'])\;|\;g\in G, [g']\in A, w\in W, w'\in\setB_1(W)\}
\]
is the total space of a normal bundle of the diagonal embedding of $G\times_HW$ for any ball $\setB_1(W)$ around $0$ in $W$. We can choose $\setB_1(W)$ such that we find another 
ball $\setB_2(W)$ around $0$ in $W$ with $([g, w], f([g, w]))\notin U$ for $w\in\setS_2(W)=\partial\setB_2(W)$. This is possible since $G\times_H0$ is the only fixed orbit of $f$. 
By our non-degeneracy assumption, we can arrange the neighbourhoods in a way such that
\[
 ([g, w], [g\eta(tw), n(w)])\notin U
\]
for $w\in\setS_2(W)$ and $t\in\incc{0,1}$. This shows that the map
\[
 (\id, f):\quot{\setB_2(W)}{\setS_2(W)}\to\quot U{\partial U},\;w\mapsto\begin{cases}
                                                                         * & w\in\setS_2(W)\\
                                                                         * & ([g, w], f([g, w]))\notin U\\
                                                                         ([g, w], f([g, w])) & \mbox{ else}
                                                                        \end{cases}
\]
is equivariantly homotopic to the map
\[
 \quot{\setB_2(W)}{\setS_2(W)}\to\quot U{\partial U},\;w\mapsto\begin{cases}
                                                                         * & w\in\setS_2(W)\\
                                                                         * & ([g, w], [g, n(w)])\notin U\\
                                                                         ([g, w], [g, n(w)]) & \mbox{ else}
                                                                        \end{cases}.
\]
So we can assume that $(\id, f)$ already has this property. We note in particular that the homotopy only involves the group coordinate, so the derivative of $n$ at $0$ equals the 
derivative of $f$ at $[e, 0]$ in normal direction to the group orbit. 
\end{enumerate}

Summarizing the assumptions and simplifications, we are left with the task to compute the Lefschetz number of a map $G\times_HW\to G\times_HW,\;[g,w]\mapsto[g, n(w)]$ of
a tubular neighbourhood of an embedding of $\quot GH$ into a $G$-representation $V$, where $T_0n$ has no eigenvalue of unit modulus. We will continue to use the local data 
assembled in v) above.

The equivariant Lefschetz number is defined as the image of the Thom class under
\[
 H^V_G(\quot U{\partial U})\slra{(\id, f)^*}H^V_G(G_+\smash_H\setS_2^W)\slra{\cong}\calH_0^G((G\times_H\setB_2(W))_+)\to\calH_0^G(\setS^0),
\]
the middle isomorphism being Poincar\'{e} duality. 

We note that the Thom space $\quot U{\partial U}$ can be identified with $G_+\smash_H\setB_1(W)_+\smash\setS^L\smash\setS^W$ via
\[
 ([g, w], [g\sigma([g']), w+w'])\mapsto[g, w, \zeta([g']), w'],
\]
where $\zeta:A\to L$ is an $H$-homeomorphism satisfying $\zeta([e])=0$.

The special form of $(\id, f)$ then has the property that the diagram
\[
 \xymatrix{
  G_+\smash_H\setS^W\ar[r]^{(\id, f)\quad\quad\quad}\ar[dr]_{i} & G_+\smash_H\setB_1(W)_+\smash\setS^L\smash\setS^W\\
                                                          & G_+\smash_H\setS^L\smash\setS^W\ar[u]_{\Sigma^L(\id, \tilde{n})}
          }
\]
commutes, where $i$ is the inclusion and $(\id, \tilde{n})$ denotes the map
\[
 G_+\smash_H\setS^W\to G_+\smash_H\setB_1(W)_+\smash\setS^W,\;[g,w]\mapsto ([g, w], n(w)-w).
\]
Thus, we obtain a diagram
\[
 \xymatrix{
          H^V_G(G_+\smash_H\setB_1(W)_+\smash\setS^L\smash\setS^W)\ar[r]^{\quad\quad(\id, f)^*}\ar[dr]_{\Sigma^L(\id, \tilde{n})^*}\ar[dd]^\cong&H^V_G(G_+\smash_H\setS^W)\\
          &H^V_G(G_+\smash_H\setS^L\smash\setS^W)\ar[d]^\cong\ar[u]^{i^*}\\
          H^V_H(\setB_1(W)_+\smash\setS^L\smash\setS^W)\ar[r]^{\Sigma^L(\id, n)^*}\ar[d]^\cong&H^V_H(\setS^L\smash\setS^W)\ar[d]^\cong\\
          H^W_H(\setB_1(W)_+\smash\setS^W)\ar[r]^{(\id, n)^*}&H^W_H(\setS^W).
          }
\]
We will see later, when we compute the $H$-equivariant Lefschetz number of $n$, that $\setB_1(W)_+\smash\setS^W$ is a model for the Thom space of the diagonal embedding of 
$\setB_1(W)$ and the corresponding map $(\id, n)$ indeed makes the diagram commutative. 
 
We complete this diagram by inserting Poincar\'{e} duality isomorphisms on the right. We obtain, with some suppressed suspension isomorphisms,
\[
 \xymatrix{
  H^V_G(G_+\smash_H\setS^W)\ar[r]^P&\calH_V^G({\quot GH}_+\smash\setS^V)\ar[r]^{p_*}&\calH_V^G(\setS^V)\\
  H^V_G(G_+\smash_H\setS^W\smash\setS^L)\ar[r]^P\ar[u]^{i^*}&\calH^G_V(G_+\smash_H\setS^W)\ar[u]^{j_*}&\\
  H^V_H(\setS^W\smash\setS^L)\ar[r]^P\ar[u]^w&\calH_V^H(\setS^V)\ar[u]^w\ar[ruu]_?&\\
  H^W_H(\setS^W)\ar[r]^P\ar[u]^\cong&\calH_W^H(\setS^W)\ar[u]^\cong&,        
  }
\]
where $j:G_+\smash_H\setS^W\to{\quot GH}_+\smash\setS^V$ is the map $[g, w]\mapsto([g], g.w)$. The two diagrams together imply that the $G$-equivariant Lefschetz number of $f$ 
equals the image of the $H$-equivariant Lefschetz number of $n$ under the map $p_*\circ i_*\circ w$, where $w$ is the Wirthm\"{u}ller isomorphism in the middle right above. 
Now it follows from the definition of the transfer $t:\setS^V\to{\quot GH}_+\smash\setS^V$, \cite{may} Definition II.6.15, and the construction of the Wirthm\"{u}ller isomorphism, 
\cite{may} Definition II.6.1, that the diagram
\[
 \xymatrix{
 \{\setS^V,\setS^V\smash\calH\}_H\ar[d]^{\cong}\ar[r]^w&\{\setS^V,G_+\smash_H\setS^W\smash\calH\}_G\ar[d]^{(j\smash\id)_*}\\
 \{{\quot GH}_+\smash\setS^V,\setS^V\smash\calH\}_G\ar[d]^{t^*}&\{\setS^V, {\quot GH}_+\smash\setS^V\smash\calH\}_G\ar[dl]^{(p\smash\id)_*}\\
 \{\setS^V,\setS^V\smash\calH\}_G&
}
\]
commutes. The vertical map on the left induces the induction map $t^G_H:A(H)\to A(G)$. Consequently, the map labelled with a question mark in the diagram above 
induces the induction map $t^G_H$. We conclude that for the $G$-equivariant Lefschetz number of $f$, the formula
\[
 L_G(f)=t^G_H(L_H(n))
\]
holds.

It therefore remains to calculate the $H$-equivariant Lefschetz number of the map $n$. This is an $H$-map of an $H$-representation with $0$ as a unique fixed point. We emphasize that
under our assumptions, $\id-T_0n$ is invertible.

We find two balls $\setB_1=\setB_1(W),\setB_2=\setB_2(W)$ in $W$ around $0$ such that $\frac{w-n(w)}2\notin\setB_2$ for $w\in\setS_1=\partial\setB_1$. We take
\[
 U=\{(w, w)+(z, -z)\;|\;w\in W, z\in\setB_2\}
\]
as a tubular neighbourhood of $\Delta(W)$ in $W\times W$ with the obvious tubular retraction. The Thom space $T\nu^{W\times W}_\Delta$ is the space $\quot{\conj U}{\partial U}$. 
The map $(\id, n)$ defined in the definition of the equivariant Lefschetz number is induced by
\[
 N:w\mapsto(w, n(w))=\left(\frac{w+n(w)}2,\frac{w+n(w)}2\right)+\left(\frac{w-n(w)}2,\frac{n(w)-w}2\right)\in U.
\]
In detail,
\[
 (\id, n):\quot{\conj{\setB}_1}{\setS}_1\to\quot{\conj U}{\partial U},\;(\id,n)(w)=\begin{cases}
           * & \frac{w-n(w)}2\notin\setB_2\\
           N(w) & \mbox{ else}.
          \end{cases}
\]
The projection 
\[
p_2:\quot{\conj U}{\partial U}\to\quot{\conj{\setB}_2}{\setS_2},\;(w,w)+(z, -z)\mapsto z
\]
is an $H$-homotopy equivalence, since $W$ is $H$-contractible. Therefore, the defining diagram for the equivariant Lefschetz number extends to
\[
 \xymatrix{
          H^W_H(\quot{\conj U}{\partial U})\ar[r]^{(\id,n)^*}&H^W_H(\quot{\conj{\setB_1}}{\setS_1})\ar[r]^P\ar[d]^\id&\calH_0^H({\setB_1}_+)\ar[r]^\eps&\calH_0^H(\setS^0)\\
          H^W_H(\quot{\conj{\setB_2}}{\setS_2})\ar[u]^{p_2^*}\ar[r]\ar[d]^\cong&H^W_H(\quot{\conj{\setB_1}}{\setS_1(W)})\ar[d]^\cong&&\\
          H^W_H(\setS^W)\ar[r]&H^W_H(\setS^W)&&
          }
\]
The horizontal map in the middle is induced by
\[
 N':\quot{\conj{\setB}_1}{\setS_1}\to\quot{\conj{\setB}_2}{\setS_2},\;w\mapsto\begin{cases}
          * & \frac{w-n(w)}2\notin\setB_2\\
          \frac{w-n(w)}2 & \mbox{ else}. 
         \end{cases}
\]
This is, up to the identification $\quot{\conj{\setB}_1}{\setS_1}\cong\setS^W$, the Pontryagin-Thom construction $\setS^W\to\setS^W$ of the map $\id-n:W\to W$ (see e.g. 
\cite{bredon}). Using the representing spectra, it is clear that the diagram
\[
\xymatrix{
H^W_H(\setS^W)\ar[d]^{\sigma^{-1}}\ar[r]^P&\calH_0^H({\setB_1}_+)\ar[r]^{\eps_*}&\calH_0^H(\setS^0)\ar[d]^\cong\\
H^0_H(\setS^0)\ar[rr]^\cong&&A(H)
}
\]
commutes. In the initial diagram for the equivariant Lefschetz number, the Thom class in $H^W_H(\quot{\conj U}{\partial U})$ maps to the unit element in $H^W_H(\setS^W)$ under 
the vertical isomorphism on the left. Then going right is just precomposition with $N'$, so the unit is mapped to the class of $N'$ in $\{\setS^0,\setS^0\}_H\cong H^W_H(\setS^W)$. 
It follows that the equivariant Lefschetz number is the stable equivariant homotopy class of the Pontryagin-Thom construction for the map $\id-n$ (after identifying $A(H)$ with 
$\{\setS^0,\setS^0\}_H$).

Under the assumption that $\id-T_0n$ is invertible, the Pontryagin-Thom constructions of $\id-T_0n$ and $\id-n$ are $H$-homotopic. Indeed, we just have to show
that there exists a ball $\setB(W)$ in $W$ and a sphere $\setS(W)$, both centered at $0$, such that $w-h(t, w)\notin\setB(W)$ for $w\in\setS(W)$, where
\[
 h(t, w)=w-(t\cdot n(w)+(1-t)\cdot T_0n(w)).
\]
Then $h$ induces an $H$-homotopy of the Pontryagin-Thom constructions. The derivative of $h_t=h(t,\,\cdot\,)$ is given by $\id-T_0n$, so $h_t$ is invertible in a small ball 
around $0$. Since $h_t(0)=0$, this shows that $\norm{h_t(w)}>\eps>0$ for all $t$, all $w$ in a small sphere around $0$ and some $\eps>0$. The claim follows.

We denote the element of $A(G)$ corresponding to the Pontryagin-Thom construction of a map $f:V\to V$ by $Deg_G(f)$. We have proven the following theorem.

\begin{theorem}\label{thm:lefschetz2}
 Let $M$ be a $G$-manifold and $f:M\to M$ a $G$-map with finitely many $G$-orbits of fixed points $Gx_1,\dots, Gx_n$. Assume that $\id-N_{x_i}f$ has no eigenvalue of unit modulus
 for $i=1, \dots, n$, where $N_xf$ is the component of $T_xf$ normal to the orbit $Gx$. Then the equivariant Lefschetz number $L_G(f)$ is given as
 \[
  L_G(f)=\sum_{i=1}^nt^G_{G_{x_i}}\left(Deg_{G_{x_i}}(\id-N_{x_i}f)\right).
\]
\end{theorem}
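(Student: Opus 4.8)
The proof essentially assembles the reductions and diagram chases carried out in the discussion preceding the statement; the plan is to organize them as follows. First I would use the diagrammatic description of the equivariant Lefschetz number, namely that $L_G(f)$ is the image of the diagonal Thom class under
\[
 H^V_G(T\nu^{M\times M}_\Delta)\slra{(\id,f)^*}H^V_G(\quot{\conj{N}}{\partial N})\slra{\cong}\calH_0^G(N_+)\to\calH_0^G(\setS^0),
\]
where $N$ is any invariant neighbourhood of $Fix(f)$. This exhibits $L_G(f)$ as depending only on an arbitrarily small neighbourhood of the fixed set. Taking $N$ to be a disjoint union of invariant tubular neighbourhoods $N_i$ of the orbits $Gx_i$ and using that the dual homology theory sends wedges to direct sums (Theorem~\ref{thm:ordinaryproperties}~(i)), one gets $L_G(f)=\sum_{i=1}^n L_G(f|_{N_i})$, so it suffices to treat a single fixed orbit $Gx$, with $H:=G_{x}$.

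Next I would put the local map in normal form. After embedding $M$ in a $G$-representation $V$ and replacing $f$ by $i\circ f\circ r$ on a tubular neighbourhood, the local model is a map $G\times_H W\to G\times_H W$ with $G\times_H 0$ the unique fixed orbit. Krupa's normal decomposition lemma writes this as $[g,w]\mapsto(g\gamma(w),n(w))$ with $n:W\to W$ an $H$-map, and the non-degeneracy hypothesis is exactly that $\id-T_0 n$ is invertible (and $T_0 n = N_x f$). Using Field's $N(H)$-equivariant section $\sigma:A\to G$ I would, as in the displayed homotopy in point v) above, homotope $(\id,f)$ — through a homotopy moving only the group coordinate, hence leaving $T_0 n$ untouched — to the map induced by $[g,w]\mapsto[g,n(w)]$. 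Running the two stacked commutative diagrams of the discussion then identifies $L_G(f|_{N_i})$ with the image of the $H$-equivariant Lefschetz number $L_H(n)$ under $p_*\circ i_*\circ w$, where $w$ is the relevant Wirthm\"{u}ller isomorphism and $p:{\quot GH}_+\to\setS^0$ the collapse.

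It then remains to identify two things. First, comparing the composite $p_*\circ i_*\circ w$ with the definition of the transfer $t:\setS^V\to{\quot GH}_+\smash\setS^V$ (\cite{may}, Definition II.6.15) and the construction of the Wirthm\"{u}ller map (\cite{may}, Definition II.6.1), the commuting triangle displayed just before the statement shows that this composite induces the induction homomorphism $t^G_H:A(H)\to A(G)$, so $L_G(f|_{N_i})=t^G_H(L_H(n))$. Second, modelling $T\nu^{W\times W}_\Delta$ as $\setB_1(W)_+\smash\setS^W$ and collapsing along the $H$-equivalence $\quot{\conj{U}}{\partial U}\to\quot{\conj{\setB}_2}{\setS_2}$ (an equivalence since $W$ is $H$-contractible), the map computing $L_H(n)$ becomes, after the identification $\quot{\conj{\setB}_1}{\setS_1}\cong\setS^W$, the Pontryagin--Thom construction of $\id-n$, whence $L_H(n)=Deg_H(\id-n)$. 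Finally the linear homotopy $h(t,w)=w-(tn(w)+(1-t)T_0 n(w))$, invertible near $0$ because its derivative is the invertible $\id-T_0 n$, upgrades this to $Deg_H(\id-n)=Deg_H(\id-T_0 n)=Deg_{G_{x_i}}(\id-N_{x_i}f)$. Summing over $i$ yields the claimed formula.

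The main obstacle is the transfer identification in the third step: one must carefully match the geometric transfer (pretransfer) of \cite{may} against the Wirthm\"{u}ller isomorphism that is implicit in equivariant Poincar\'{e} duality, and keep the bookkeeping of the various suspension isomorphisms in the four-row diagram consistent. By comparison the two homotopy arguments — Krupa's normal form together with the homotopy in the group coordinate, and the linearization $h$ — are routine, provided one tracks which derivatives are preserved at each step.
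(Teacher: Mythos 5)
Your proposal is correct and follows essentially the same route as the paper: the paper's proof of this theorem is precisely the chain of reductions i)--v), the two stacked duality diagrams identifying the local contribution with $t^G_H(L_H(n))$ via the transfer/Wirthm\"{u}ller compatibility, and the Pontryagin--Thom computation of $L_H(n)$ together with the linear homotopy to $\id-T_0n$, all of which you reassemble in the same order (with the additivity over fixed orbits made slightly more explicit via the wedge axiom).
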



\section{The equivariant Fuller index}
In this section we construct an equivariant Fuller index, using similar homological techniques as for the equivariant Lefschetz number. For the basic theory of the Fuller index,
see \cite{chowmallet}, \cite{franzosa} and \cite{fuller}. Our approach is an equivariant generalization of \cite{franzosa}.

The initial definition on the path to an equivariant Fuller index resembles the definition of the equivariant Lefschetz number for non-compact manifolds. We recall that the 
equivariant Lefschetz number, in the case where $M$ is a compact orientable $V$-manifold, equals the image of the Thom class $\tau$ of the diagonal embedding of $M$ into 
$M\times M$ under the sequence of maps
\[
 H^V_G(T\nu^{M\times M}_\Delta)\slra{\psi^*}H^V_G((M\times M)_+)\slra{(\id,f)^*}H^V_G(M_+)\slra{\cap\calO_M}\calH_0^G(M)\slra{\eps}A(G),
\]
where the first map is induced by the Pontryagin-Thom map.

We now use ideas of Franzosa \cite{franzosa} to assign an index to an equivariant flow on a $V$-manifold, related to an isolated compact set of periodic points. A periodic point
of a flow $\varphi:M\times\setR\to M$ is a pair $(x, T)\in M\times\setR$ such that $\varphi(x, T)=x$.

We assume that $M$ is an orientable $V$-manifold and $C\subseteq M\times\setR$ is a compact subset of periodic points. Moreover, we assume that $C$ is isolated, i.e. there exists 
an open set $\Omega\subseteq M$ with compact closure, and $a, b\in\setR$, $a, b>0$ such that $C\subseteq\conj{\Omega}\times\inoo{a,b}$ and the only periodic points in 
$\Omega\times\incc{a,b}$ are the points in $C$. $\Omega$ can be chosen to be a manifold with boundary, and in particular, the image of $\partial\Omega$ under the map 
$(\pi_1, \varphi):M\times\setR\to M\times M$ does not meet the diagonal. We find a small invariant tubular neighbourhood $U$ of the diagonal such that $\partial\Omega$ does not 
meet $U$.

Then we can define a map $\Phi:\Sigma\quot\Omega{\partial\Omega}\to T\nu^{M\times M}_\Delta$, where $\nu^{M\times M}_\Delta$ is the normal bundle of the diagonal embedding with 
total space $U$. We rescale the suspension variable to run between $a$ and $b$, and define
\[
 \Phi:\Sigma\quot\Omega{\partial\Omega}\to T\nu^{M\times M}_\Delta,\;(x, t)\mapsto\begin{cases}
                                                       * & (x, t)\notin\Omega\times\inoo{a,b}\\
                                                       * & (x, \varphi(x, t))\notin U\\
                                                       (x, \varphi(x, t)) & \mbox{ else.} 
                                                      \end{cases}
\]
This is well defined by the choice of $U$. With this map in place, we imitate the definition of the equivariant Lefschetz class. We define the homological index of $\varphi$
with respect to $C$ to be the image of a Thom class of $T\nu^{M\times M}_\Delta$ under the sequence of maps
\[
 H^V_G(T\nu^{M\times M}_\Delta)\slra{\Phi^*}H^V_G(\Sigma\quot\Omega{\partial\Omega})\cong H^{V-1}_G(\quot\Omega{\partial\Omega})\slra{\cap\calO_\Omega}\calH_1^G(\Omega_+)
                               \to\calH_1^G(M_+),
\]
where the middle isomorphism is the suspension isomorphism and the last map is induced by the inclusion. 

There are obvious similarities with the equivariant Lefschetz number, but also significant differences. For one, the index lives in a group which depends on the manifold $M$, 
rather than a universal group like $A(G)$ in the case of the Lefschetz number. Furthermore, the index must be zero if $\calH_1^G(M_+)$ is trivial. Thinking non-equivariantly, 
this would mean that the index is zero for all simply connected manifolds. But certainly, one has interesting flows and periodic orbit structures on such manifolds as well.

The same problem arises for the ordinary Fuller index, and there is a standard construction by Fuller, generalized by Franzosa in \cite{franzosa}, which tackles this issue.
We first note that, as in the case of the Lefschetz number, we can drop orientability assumptions on $M$. If $\varphi$ is a flow on any $G$-manifold with finite orbit type and 
$C$ is an isolated compact set of periodic orbits, we can define an index for $\varphi$ with respect to $C$ as follows. We embed $M$ into a $G$-representation $V$ and let $U$ be 
an invariant tubular neighbourhood of the embedding. We can assume that $\varphi$ is induced by a vector field $\xi$ on $M$ and we can extend $\xi$ to a vector field $\xi_0$ on 
$U$ by defining $\xi_0(x)=\xi(r(x))-(x-r(x))$, where $r:U\to M$ is the tubular retraction and we identify tangential spaces with $V$ in the canonical way. Then $\xi_0$ is a 
vector field on $U$ with the same periodic orbits as $\xi$. In particular, $C$ is still an isolated compact set of periodic points for the flow $\varphi_0$ of $\xi_0$, and 
we define the index of $\varphi$ with respect to $C$ to be the index of $\varphi_0$ with respect to $C$. 

In the next step, we start with any $G$-manifold of finite orbit type. For a prime number $p$, $M^p$ is a $G$-manifold with the diagonal action, and $\setZ_p$ acts freely on the 
set $M^p\setminus\Delta_f$, where 
\[
\Delta_f=\{(x_1,\dots, x_p)\in M^p\;|\;\exists\;1\leq i<j\leq p:\;x_i=x_j\} 
\]
is the thickened diagonal. We denote the quotient manifold under this action by $M_p$. A flow $\varphi$ on $M$ induces a flow $\varphi_p$ on $M_p$ by acting diagonally. For a 
periodic point $(x, T)\in M\times\setR$, we write $(x_p, \frac Tp)$ for the equivalence class of 
\[
\left((x, \varphi(x, \frac Tp), \varphi(x, \frac{2T}p),\dots, \varphi(x, \frac{(p-1)T}p)), \frac Tp\right)
\]
in $M_p\times\setR$. Taking $p$ to be a large prime number, such that $\frac{kT}p$ is not a period of the point $x$ for any $k=1, \dots, p-1$, we see that $(x_p, \frac Tp)$ is
a periodic point of $\varphi_p$. Moreover it can be shown, compare \cite{franzosa}, that if $C$ is compact, $p$ can be chosen in a way such that $\frac{kT}p$ is not a period for
any periodic point $(x, T)\in C$. We therefore can define, for $p$ large, the set $C_p$ to be the set consisting of all $(x_p, \frac Tp)$ with $(x, T)\in C$.

We remark that there is a one-to-one correspondence of periodic points of $\varphi$ with the periodic points of $\varphi_p$. Indeed, if $(x, T)$ is a periodic point of $\varphi$,
then $(x_p, \frac Tp)$ clearly is a periodic point of $\varphi_p$. Conversely, if $([x_1,\dots, x_p], T)$ is any periodic point of $\varphi_p$, then 
\[
 [\varphi(x_1, T),\dots, \varphi(x_p, T)]=[(x_1, \dots, x_p)],
\]
and so there is an index $j$ such that $\varphi(x_{j+i}, T)=x_i$, $i=1, \dots, p$, where we calculate modulo $p$ in the index. It follows that
\[
 x_1=\varphi(x_{i\cdot j+1}, i\cdot T).
\]
In particular it follows that, $(x_1, p\cdot T)$ is a periodic point of $\varphi$.

With all this preparation, we have a homological index defined for the flow $\varphi_p$ with respect to $C_p$, for $p$ sufficiently large. It is an element of 
$\calH_1^G({M_p}_+)$. 

We now pass to the group
\[
 \calH^G(M)=\quot{\prod_{p\sklein{\mbox{ prime }}}\calH_1^G({M_p}_+)}{\bigoplus_{p\sklein{\mbox{ prime }}}\calH_1^G({M_p}_+)}.
\]
The various homological indices define an equivariant homological Fuller index of the flow $\varphi$ with respect to $C$, which we write as $I(\varphi)=I(\varphi, C)$.

The restriction maps define homomorphisms
\[
 \psi_K:\calH^G(M)\to\quot{\prod_{p\sklein{\mbox{ prime }}}H_1({M^K_p}_+)}{\bigoplus_{p\sklein{\mbox{ prime }}}H_1({M^K_p}_+)}.
\]
The latter group admits, via covering space theory, a map
\[
 \mu_K:\quot{\prod_{p\sklein{\mbox{ prime }}}H_1({M^K_p}_+)}{\bigoplus_{p\sklein{\mbox{ prime }}}H_1({M^K_p}_+)}\to
       \quot{\prod_{p\sklein{\mbox{ prime }}}\setZ_p}{\bigoplus_{p\sklein{\mbox{ prime }}}\setZ_p}.
\]
We denote the group on the right by $\calZ$. 

Next, we recall from \cite{may}, that the Burnside ring of a compact Lie group admits an inclusion $A(G)\to C(\Phi G, \setZ)$. Here, $\Phi G$ is the set of conjugacy classes of 
subgroups of $G$ with finite Weyl group. It is topologized as the quotient of a subset of the space of subgroups, which carries the topology induced by the Hausdorff metric. In 
this topology, $\Phi G$ is compact and totally disonnected. In particular, $C(\Phi G, \setZ)$ consists of the locally constant functions. 

The composition maps $\mu_K\circ\psi_K$ constitute an element in $C(\Phi G, \calZ)$, namely the element
\[
 \Phi G\to\calZ,\;(H)\mapsto\mu_K\circ\psi_K(I(\varphi)).
\]
$C(\Phi G, \calZ)$ is isomorphic to $C(\Phi G, \setZ)\tensor\calZ$. The image of the homological Fuller index $I(\varphi)$ under the map $\mu_K\circ\psi_K$ is the homological 
Fuller index of the fixed point flow $\varphi^K$. This follows as in the case of the Lefschetz number by applying restriction to the defining diagram of the homological
indices. Franzosa has shown that this element is actually a rational number, identified via the embedding
\[
 \setQ\to\calZ,\;\frac rq\mapsto\{a_p\}_{p\sklein{\mbox{ prime}}},\;r=a_p\cdot q\mod p.
\]
It follows that the image of the homological equivariant Fuller index $I(\varphi)\in\calH^G(M)$ in $C(\Phi G, \setZ)\tensor\calZ$ actually constitutes an element in 
$C(\Phi G, \setZ)\tensor\setQ$. By Lemma 2.10 of \cite{may}, this ring is isomorphic to the rationalized Burnside ring $A(G)\tensor\setQ$. 

We define the equivariant Fuller index $F_G(\varphi)$ to be the image of the homological equivariant Fuller index in the rationalized Burnside ring via this identification.

The following result is now straightforward. 

\begin{theorem}
The equivariant Fuller index $F_G$ is a $G$-homotopy invariant of a flow $\varphi$ with respect to an isolated set $C$ of periodic points, and has the following properties.
 \begin{enumerate}[i)]
  \item It takes values in the rationalized Burnside ring $A(G)\tensor\setQ$.
  \item If $C$ consists of finitely many periodic orbits $\gamma_1,\dots, \gamma_n$ and $\varphi_i$ is the flow $\varphi$, restricted to an isolating neighbourhood of the 
  orbit $\gamma_i$, then
  \[
   F_G(\varphi)=\sum_{i=1}^nF_G(\varphi_i).
  \]

  \item If $\varphi$ has a single periodic orbit of multiplicity $m$, then $F_G(\varphi)=L_G(P^m)\tensor\frac 1m\in A(G)\tensor\setQ$, where $P$ is an equivariant Poincar\'{e}
  map for the orbit, considered with multiplicity one.
 
  \item If $\eta_H(F_G(\varphi))\neq0$, then $\varphi$ has a periodic orbit of orbit type at least $(H)$.
 \end{enumerate}
\end{theorem}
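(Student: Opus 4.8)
The plan is to reduce all four assertions, via the restriction behaviour of the construction, to the non-equivariant results of Franzosa \cite{franzosa} applied to the fixed-point flows $\varphi^H$, together with the equivariant Lefschetz theory of Sections~3--5. \emph{Homotopy invariance} is immediate from the construction: the homological index is the image of a \emph{fixed} Thom class under a sequence of maps of which only the collapse map $\Phi$ depends on $\varphi$; a $G$-homotopy of flows through flows with $C$ isolated induces a $G$-homotopy of the corresponding $\Phi$'s, and every later step---the $M_p$-quotients, the passage to $\calH^G(M)$, the maps $\mu_K\circ\psi_K$, and rationalization---is natural. For (i) I would check that $(H)\mapsto\mu_H\circ\psi_H(I(\varphi))$ is locally constant on $\Phi G$ (inherited from local constancy of the non-equivariant indices of the $\varphi^H$ and continuity of $H\mapsto M^H$ in the Hausdorff topology), so that it defines an element of $C(\Phi G,\calZ)\cong C(\Phi G,\setZ)\tensor\calZ$; then Franzosa's rationality statement, applied to each $\varphi^H$, places it in $C(\Phi G,\setZ)\tensor\setQ\cong A(G)\tensor\setQ$ by Lemma~2.10 of \cite{may}, which is precisely the definition of $F_G$.

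For (ii), when $C$ consists of finitely many orbits $\gamma_1,\dots,\gamma_n$ I would take the isolating manifold $\Omega$ to be a disjoint union $\Omega_1\sqcup\cdots\sqcup\Omega_n$ of isolating neighbourhoods. Then $\Sigma(\quot{\Omega}{\partial\Omega})\cong\bigvee_i\Sigma(\quot{\Omega_i}{\partial\Omega_i})$ and the collapse map splits as the wedge of the $\Phi_i$; since $RO(G)$-graded ordinary homology sends wedges to direct sums (Theorem~\ref{thm:ordinaryproperties}(i)), the homological index of $\varphi$ is the sum of those of the $\varphi_i$. Taking $p$ large enough that the orbits remain separated in $M_p$, the same splitting survives the $M_p$-construction, so additivity descends to $F_G$.

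For (iii), I would restrict to a tubular neighbourhood of the single orbit $\gamma$, where the flow is $G$-homotopic to the suspension flow of an equivariant Poincar\'{e} return map $P$ on a transverse slice. Periodic points of period near $m\tau$, with $\tau$ the primitive period, correspond to fixed points of $P^m$, and collapsing along the flow direction turns $\Phi$ into the map $(\id,P^m)$ used to define $L_G(P^m)$; hence the homological index \emph{before} passing to $M_p$ equals $L_G(P^m)$. The factor $\frac1m$ is exactly Franzosa's count: in $M_p$ the orbit traversed $m$ times contributes a single $\varphi_p$-orbit carrying $\setZ_p$-weight $1/m$ under $\mu_K$, computed fixed-point-wise, so after rationalization $F_G(\varphi)=L_G(P^m)\tensor\frac1m$.

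For (iv) I would argue by contraposition, exactly as for Theorem~\ref{thm:lefschetz}: if $\varphi$ has no periodic orbit of orbit type at least $(H)$ then $C\cap(M^H\times\setR)=\emptyset$, so the fixed-point flow $\varphi^H$ on $M^H$ has empty isolated periodic set and its homological index vanishes; hence $\mu_H\circ\psi_H(I(\varphi))=0$, i.e. the $(H)$-component of $F_G(\varphi)$ is zero, which is to say $\eta_H(F_G(\varphi))=0$. I expect the only genuinely non-formal point to be (iii): identifying the homological index of the suspension flow with $L_G(P^m)$ and tracking the weight $1/m$ through the $M_p$-construction. Everything else is naturality, the wedge axiom of Theorem~\ref{thm:ordinaryproperties}, and the fixed-point reduction, which collectively reduce the remaining content to \cite{franzosa}.
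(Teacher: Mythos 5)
Your overall strategy is sound, but it is a genuinely different route from the paper's. The paper's proof is essentially mark-wise: since $F_G(\varphi)$ is \emph{defined} through the values $\mu_K\circ\psi_K(I(\varphi))$, which are the non-equivariant homological Fuller indices of the fixed-point flows $\varphi^K$, everything substantive is reduced to Franzosa's theorems applied to each $\varphi^K$. In particular iii) is proved in two lines: by \cite{franzosa}, $F_G(\varphi)$ restricts to $F(\varphi^H)=L((P^H)^m)/m$, while $L_G(P^m)\tensor\frac1m$ has the same restrictions by Corollary \ref{prop:lefschetzrestriction} (and the discussion of marks in section 5), so the two agree in $A(G)\tensor\setQ$; the remaining assertion iv) is then obtained from additivity, iii), Theorem \ref{thm:lefschetz} and the genericity result of \cite{wruck}. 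You instead prove ii) directly from the wedge axiom of Theorem \ref{thm:ordinaryproperties} (fine, provided you also note that disjoint isolating neighbourhoods can be chosen in $M_p$, using that every periodic point of $\varphi_p$ has all coordinates on a single $\varphi$-orbit) and iv) directly from vanishing of the fixed-point indices, which avoids the genericity input; what the paper's route buys is that nothing beyond the restriction property and Franzosa is ever computed, while yours stays internal to the equivariant construction.

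Two points in your proposal need repair. In iii), the claim that ``the homological index before passing to $M_p$ equals $L_G(P^m)$'' is unjustified and cannot be taken literally: that index lives in $\calH_1^G(M_+)$ and may vanish for trivial reasons (this is exactly why the $M_p$-construction and the covering-space map $\mu_K$ are introduced), so no identification with an element of $A(G)$ is available at that stage. The usable part of your argument is the fixed-point-wise one you append: Franzosa gives $\mu_K\circ\psi_K(I(\varphi))=L((P^K)^m)/m$ for every $K$, and since the marks of $L_G(P^m)$ are $L((P^m)^K)=L((P^K)^m)$ by the restriction properties of section 5, equality in $A(G)\tensor\setQ$ follows because $F_G$ is determined by its marks --- i.e.\ you are led back to the paper's argument, and the direct identification should simply be dropped. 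In iv), ``the $(H)$-component of $F_G(\varphi)$ is zero'' is not the same as $\eta_H(F_G(\varphi))=0$: the latter requires the marks at all $K\supseteq H$ to vanish. The fix is immediate --- if there is no periodic orbit of orbit type at least $(H)$, then $\varphi^K$ has no periodic points for every $K\supseteq H$, so all the relevant marks vanish --- but as written the equivalence you assert is false.
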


\begin{proof}
 i) is immediately clear and iii) follows from ii), the equivariant Lefschetz fixed point theorem \ref{thm:lefschetz} and the fact that every equivariant flow is $G$-homotopic to
 a flow with finitely many $G$-orbits of periodic orbits, see \cite{wruck}. So it remains to prove ii). The evident non-equivariant analogue has been proven by Franzosa in 
 \cite{franzosa}. We use some basic theory of equivariant Poincar\'{e} maps as in e.g. \cite{wruck}. $P^H$ is a Poincar\'{e} map for the fixed point flow $\varphi^H$, so
 by Franzosas result, $F_G(\varphi)$ restricts under $\eta_H$ to $F(\varphi^H)=\frac{L((P^H)^m)}m$. Clearly, $L_G(P^m)\tensor\frac1m$ restricts to the same element. Therefore, 
 the two elements are equal in $A(G)\tensor\setQ$. 
\end{proof}

\end{document}